\newcommand{\Tau}{T}
\renewcommand{\cdot}{\bullet}
\newcommand{\C}[1]{ {C^{\cdot}(#1)}}
\newcommand{\rC}[1]{ {\widetilde{C}^{\cdot}(#1)}}
\newcommand{\Ck}[1]{ {C^{k}(#1)}}
\newcommand{\rCk}[1]{{\widetilde{C}^{k}(#1)}}
\newcommand{\Cp}[1]{ {C^{q}(#1)}}
\newcommand{\rCp}[1]{ {\widetilde{C}^{q}(#1)}}
\newcommand{\Cq}[1]{ {C^{r}(#1)}}
\newcommand{\rCq}[1]{ {\widetilde{C}^{r}(#1)}}
\newcommand{\Cpq}[1]{ {C^{q+r}(#1)}}
\newcommand{\rCpq}[1]{ {\widetilde{C}^{q+r}(#1)}}
\newcommand{\Cpqq}[1]{{{C^{q+r-1}({#1})}}}
\newcommand{\Cc}[1]{ {C_{\cdot}(#1)}}
\newcommand{\tL}{{t_{<k}L}}
\newcommand{\cL}{{cone(\tL)}}
\newcommand{\TW}{{\tau_{\geq k}\Omega^{\cdot}(L)}}
\newcommand{\tW}{{\tau_{<k}\Omega^{\cdot}(L)}}
\newcommand{\WL}{{\Omega^{\cdot}(L)}}%
\newcommand{\HH}[1]{{H^{\cdot}(#1)}}
\newcommand{\Hp}[1]{{H^{q}(#1)}}
\newcommand{\Hpp}[1]{{H^{q-1}(#1)}}
\newcommand{\Hk}[1]{{H^{k}(#1)}}
\newcommand{\I}{{I^{\bar{p}}X}}
\newcommand{\WI}{{\Omega I^{\cdot}_{\bar{p}}(\bar{X})}}
\newcommand{\WIzero}{{\Omega I^{0}_{\bar{p}}(\bar{X})}}
\newcommand{\WIp}{{\Omega I^{q}_{\bar{p}}(\bar{X})}}
\newcommand{\WIq}{{\Omega I^{r}_{\bar{p}}(\bar{X})}}
\newcommand{\WX}{{\Omega^{\cdot}(\bar{X})}}
\renewcommand{\L}{{\Tau_{\geq k} \C{L}}}
\newcommand{\Lp}{{\Tau_{\geq k} \Cp{L}}}
\newcommand{\Lq}{{\Tau_{\geq k} \Cq{L}}}
\newcommand{\Lpq}{{\Tau_{\geq k} \Cpq{L}}}
\newcommand{\Q}{{Q^{\cdot}}}
\newcommand{\Crel}{{\C{\bar{X},L}}}
\newcommand{\Rb}{{{\bar{C}}^{\cdot}}}
\newcommand{\Wbp}{\WIp}
\newcommand{\Wrelp}{{\Omega^{q}(\bar{X},L)}}
\newcommand{\Crelpqq}{{{\Cpqq{\bar{X},L}}}}
\newcommand{\Wrelq}{{\Omega^{r}(\bar{X},L)}}
\newcommand{\Wbq}{\WIq}
\newcommand{\Rbpqq}{{\bar{C}^{q+r-1}}}
\newcommand{\incl}{\text{incl}}
\newcommand{\RR}{\mathbb{R}}
\newcommand{\ZZ}{\mathbb{Z}}
\newcommand{\A}{\mathscr{A}}
\newcommand{\B}{\mathscr{B}}
\newcommand{\sS}{\mathscr{S}}
\newcommand{\J}{\mathscr{J}}
\newcommand{\id}{\text{id}}
\newcommand{\im}{\text{im}}
\newcommand{\CI}{\Rb}
\newsavebox{\overlongequation}
\declaretheorem[name=Definition, numberwithin=section]{df}
\declaretheorem[name=Proposition, numberwithin=section]{prop}
\declaretheorem[name=Lemma, sibling=prop]{lem}
\declaretheorem[name=Remark, numbered=no]{rmk}
\declaretheorem[name=Theorem]{thm}
\declaretheorem[name=Corollary, numbered=no]{cor}
\title[Multiplicative De Rham Theorems for Relative Cohom. and HI]{Multiplicative de Rham Theorems for Relative and Intersection Space Cohomology}
\author{Franz Wilhelm Schl\"oder}
\address{Department of Mathematics and its Applications, 
University of Milano-Bicocca, 
Via Cozzi 55, 
20125 Milano, 
Italy}
\email{franz.schloeder@unimib.it}
\author{J. Timo Essig}
  \address{Department of Mathematics, Faculty of Science, Hokkaido University, Sapporo 060-0810, Japan}
  \email{essig@math.sci.hokudai.ac.jp}
\date{March, 2019}
\subjclass[2010]{Primary: 55N33, 55N30, 14J17, 58A10, 58A12; secondary: 57P10, 81T3, 14J33}
\keywords{Singularity, Stratified Space, Pseudomanifold, Poincar\'e Duality, Intersection Space Cohomology, Intersection Cohomology, Sheaf Theory, De Rham Theorem, Relative De Rham Theorem, Differential Forms, Cellular Cup Products, Cup Products on Cochains}
\begin{document}

\maketitle

\begin{abstract}
We construct an explicit de Rham isomorphism relating the cohomology rings of Banagl's de Rham and spatial approach to intersection space cohomology for stratified pseudomanifolds with isolated singularities. Intersection space (co-)homology is a modified (co-)homology theory extending Poincaré Duality to stratified pseudomanifolds. The novelty of our result compared to the de Rham isomorphism given previously by Banagl is, that we indeed have an isomorphism of rings and not just of graded vector spaces. 
We also provide a proof of the de Rham Theorem for cohomology rings of pairs of smooth manifolds which we use in the proof of our main result.
 \end{abstract}

\tableofcontents

\section{Introduction}
We prove that the de Rham approach to intersection space cohomology yields the same cohomology ring as the spatial approach in analogy to ordinary cohomology on smooth manifolds. We give an explicit ring isomorphism that integrates smooth forms on the top stratum over smooth cycles.

In Section \ref{sect:sheaf}, we use classical sheaf theory to prove that integration of differential forms on a smooth manifold over smooth cycles induces a ring isomorphism between the relative de Rham and singular cohomology rings.
To prove the multiplicativity with respect to a cup product 
$\cup:  H^p (M,L) \times  H^q (M,F) \rightarrow  H^{p+q} (M, L \cup F)$ induced by the wedge product of forms we need submanifolds $L,F \subset M$ which satisfy the restrictive condition that their union $L \cup F \subset M$ is also a submanifold. This is trivially fulfilled for $L=F$,though, and we use the corresponding relative de Rham result in the second part of the paper, where we prove the existence of a multiplicative de Rham isomorphism for intersection space cohomology.

Intersection space cohomology is a method, introduced by Banagl in \cite{intspaces}, to re-establish Poincar\'e duality for singular spaces by assigning a family of so-called intersection spaces $\I$ indexed by Goresky-MacPherson perversity functions $\bar{p}$ to an $n$-dimensional stratified pseudomanifold $X$. The intersection space cohomology $HI^{\cdot}_{\bar{p}}(X)$ of $X$ is defined to be the reduced singular cohomology of $\I$ with coefficients in $\mathbb{Q}$ or, as in our case, in $\mathbb{R}$. If $\bar{q}$ is the complementary perversity of $\bar{p}$, Poincar\'e duality holds in the sense that $HI^{\cdot}_{\bar{p}}(X)\cong HI^{n-\cdot}_{\bar{q}}(X)$.

The same duality statement is true for intersection cohomology introduced in \cite{GM1, GM2}. Intersection cohomology is Goresky and MacPherson's original theory to re-establish Poincar\'e duality on singular spaces. Note, that intersection cohomology and intersection space cohomology are not isomorphic but tend to be interchanged by mirror symmetry. The former can be tied up to type IIA string theory while the latter relates to type IIB.

In \cite{HIdR}, Banagl introduces a description of intersection space cohomology for pseudomanifolds of stratification depth 1 and with geometrically flat link bundle as the cohomology of a complex of smooth differential forms on the top stratum or the blowup of $X$. This enlarges the class of pseudomanifolds to which intersection space cohomology is applicable. In \cite{BanHun}, Banagl and Hunsicker give a $L^2-$description of intersection space cohomology in the case of stratification depth 1 and product link bundle.
In \cite{HIdR_Depth2} the second author uses the differential form approach to define intersection space cohomology for pseudomanifolds of stratification depth 2 with zero dimensional bottom stratum and geometrically flat link bundle for the intermediate stratum. 
De Rham theorems for intersection space cohomology are given in \cite{HIdR} for 
pseudomanifolds with isolated singularities and in \cite{EssigDeRhamThmHI} for pseudomanifolds of depth one with product link bundles. In both cases, the de Rham isomorphisms are given by integrating differential forms over certain smooth cycles. 

A description of intersection cohomology via smooth differential forms was provided in \cite{Bry}. A different approach to intersection cohomology is pursued by Brasselet and Legrand in \cite{BrasseletLegrand1} and \cite{BrasseletLegrand2}, using a complex of differential forms with coefficients in the module of poles. De Rham theorems similar to the ones for intersection space cohomology are given by Brasselet, Hector and Saralegi in \cite{Sara1} and \cite{Sara2}.  

In contrast to intersection cohomology, both approaches to intersection space cohomology naturally come with a perversity internal cup product. Neither of the above de Rham theorems clarifies whether the constructed isomorphisms respect this multiplicative structure.
This is the topic of the main part of this paper. We establish an isomorphism of the cohomology rings in the case of isolated singularities.

As an application of our result, note that intersection space cohomology provides the correct count of massless 3-branes in type IIB string theory on a conifold \cite{intspaces}. The de Rham description allows to represent those branes as differential forms and our result represents the intersection product of the branes as the wedge product of these forms.

For a space $X'$ with only isolated singularities the intersection space cohomology coincides by construction with the intersection space cohomology of the space $X$ obtained by collapsing all the singularities into a single one. Therefore we only consider the case of one isolated singularity and can think of a stratified pseudomanifold $X$ of dimension $n$ as 
\[
X=cone(i_\partial):=(\bar{X}\cup cone(L))/\sim  ~.
\]
Here $\bar{X}$ is a smooth manifold of dimension $n$ with boundary $L$ and $i_\partial$ the inclusion of this boundary. The relation ``$\sim$'' glues the bottom of the cone to the boundary of $\bar{X},$ identifying the cone coordinate with the collar coordinate of a smooth collar of the boundary. In the more general context, $\bar{X}$ is the blowup of the singular space $X$ and $L$ is the link of the singularity. Let us briefly describe the two approaches to intersection space cohomology.

The spatial approach uses Moore approximation to truncate the links. This technique is also referred to as spatial homology truncation in \cite{intspaces} and is Eckmann-Hilton dual to Postnikov approximation. In this process we associate to the link $L$ its degree $k$ spatial (co-)homology truncation $\tL$ by homotopy theoretic methods. The space $\tL$ is a $k$ dimensional CW complex with (co)homology groups isomorphic to that of $L$ in degrees smaller than $k$ and zero otherwise. The (co)homology isomorphisms in degrees smaller than $k$ are induced by a continuous map $f:\tL \to L$. 
In \cite{intspaces}, Banagl proves that such a (co)homology truncation, together with the described map $f$, exists if the link is a simply connected CW complex and $k\geq 1$. This construction involves a choice of a splitting of the boundary map $\partial_k :C_k(L) \to im(\partial_k)$, where $C_\bullet$ here and in the rest of the paper denotes the cellular chains and cellular cochains are written as $C^\bullet$ analogously. Importantly the intersection space cohomology, constructed in this way is independent of the choice of the splitting. The intersection space is defined as the homotopy cofiber of the composition
\(
g:=i_\partial \circ f ,
\)
with $i_\partial: \partial \bar{X} \hookrightarrow \bar{X}$ the inclusion of the boundary, i.e.
\[
\I:=cone(g)=(\bar{X}\cup cone(\tL))/\sim ~,
\]
where ``$\sim$'' glues the bottom of the cone to the boundary by $g$ and $k=n-1-\bar{p}(n)$. Note that due to the restrictions on the values of the Goresky-MacPherson perversity function $\bar{p}$ we have that $k\geq 1$ and assume this for the rest of the paper. As explained above the perversity $\bar{p}$ intersection space cohomology of $X$ is defined as the reduced singular cohomology of the intersection space, i.e.
\[
HI^{\cdot}_{\bar{p}}(X):=\widetilde{H}^\bullet \left( \I \right)~,
\]
and has a ring structure given by the cup product of $\widetilde{H}^\bullet \left( \I \right) $.
 
The de Rham approach to intersection space cohomology uses a complex of forms on the top stratum or the blowup of the pseudomanifold. If we fix a Riemannian metric on $L$ we can define the degree $k$ cohomology cotruncation of $\WL$ as a subcomplex by setting
\[
\TW:= \begin{cases}
0, &\text{for }\cdot<k\\
ker(d^*), &\text{for }\cdot=k\\
\WL,& \text{for }\cdot \geq k
\end{cases}
\]
where $d^*$ is the Hodge dual of the differential of $\WL$ in degree $k-1$. The choice of the metric does not affect the cohomology groups we obtain, as demonstrated in \cite{HIdR}. $\TW$ cotruncates the cohomology of $\WL$ in the sense that
the subcomplex inclusion induces an isomorphism on cohomology in degrees $\geq k$ whereas the cohomology of $\TW$ is zero in degrees smaller than $k.$ 

Rather than the original definition as in \cite{HIdR}, we adopt the definition in \cite{BanHun} and set
 \[
 \WI:=\{\omega\in \WX| i_\partial^{\#}(\omega)\in \TW\}~
 \]
 where $k=n-1-\bar{p}(n)$ as above and $i_\partial^{\#}$ denotes the pullback of differential forms along $i_\partial$. We use $\#$ to indicate both pullbacks of differential forms and induced maps on cellular cochain complexes. In practice this should not lead to any confusion and we reserve the notation $i_\partial^*$ for the induced map on cohomology. This distinction is more relevant in our work. Note, that $\WI$ with the restricted wedge product is a sub-DGA of $\WX$. This product turns $\HH{\WI}$ into a ring. The final result of this paper is
\begin{restatable*}[Multiplicative $\WI$ de Rham Theorem]{thm}{Main}
The cohomology rings $\HH{\WI}$ and  $\HH{\rC{\I}}$ are isomorphic.
\end{restatable*}

To show this, we construct a de Rham map $\phi,$ which is different from the one provided by Banagl in \cite{HIdR} on cochain level. However, both are related on cohomology as we prove in Section \ref{Compatibility}. Observe, that $\I$ is a pushout by construction. In Section \ref{sect:pullback}, we establish that in the category of cochain complexes, the reduced cochain complex of $\I$ fits into the pullback diagram
\[
\begin{tikzcd}
\rC{\I} \dar[dashed] \rar[dashed] &\rC{cone(\tL)} \dar{i_0^{\tilde{\#}}}\\
\C{\bar{X}} \rar{g^{\#}} & \C{\tL}.
\end{tikzcd}
\]
This is not true in the category of differential graded algebras, though, since the standard quasi-isomorphism between the algebraic cellular mapping cone of the map $g$ and the reduced cellular cochain complex of the topological cone of $g$ is not a DGA-morphism. To bypass this problem, we show that the isomorphism $\phi_1$ between $\rC{\I}$ and the pullback of the above diagram induces a ring isomorphism on cohomology. The proof worked out in Section \ref{sect:phi1multqis}, is based on comparing the cohomology rings of the topological mapping cone of $g$, its mapping cylinder and the cohomology of $\ker (g^{\#})$.
We then use the universal property of the pullback to construct our intersection space de Rham map.

We next describe the maps involved in this construction. In Section \ref{cellulardR}, we combine the classical de Rham map with several other constructions to get a map $\WX \to \C{\bar{X}}$ that restricts to a map $\WI \to \C{\bar{X}}$. On the other hand, Lemma \ref{gamma} provides us with a map that, in combination with the aforementioned de Rham map and the map that is induced by the inclusion of the boundary of $\bar{X}$, yields a map $\WI \to \widetilde{C}^\bullet(cone(\tL))$. The construction of Lemma \ref{gamma} heavily uses the fact that we map from an object that is cotruncated to degree $k$ to $\C{cone(\tL)}$ which is essentially truncated to degree $k+1$. The latter property forces us to work with cellular cochains on the spatial side.

A 5-Lemma argument establishes that our map indeed is a quasi-isomorphism. However, a difficulty arises because the de Rham map only becomes multiplicative at cohomology level. If we had multiplicativity on cochain level, the pullback construction would have been in the category of DGAs, the constructed map would have been a DGA homomorphism and accordingly would have induced a multiplicative map on cohomology, too. Our strategy to deal with this problem is to factorize the intersection space de Rham map $\phi$ into a part that is a DGA homomorphism, a map $\tilde{\rho}$ that sits between the absolute and relative de Rham map and the isomorphism between $\C{\I}$ and the true pullback in the diagram above. The maps induced by the DGA homomorphism is already multiplicative on representative level and we check the multiplicativity of $\tilde{\rho}$ on cohomology explicitly by using the results of the first part of this paper. As mentioned before we also establish that the isomorphism between $\C{\I}$ and the true pullback above is multiplicative on cohomology.

\section{A Multiplicative Relative de Rham Theorem}\label{sect:sheaf}
In this section, we introduce relative de Rham cohomology groups via sheaf cohomology and then prove that the multiplicative de Rham isomorphism between absolute de Rham and singular cohomology groups descends to a multiplicative isomorphism between relative groups. This fact is then used to prove that there is a multiplicative de Rham isomorphism between spatial and de Rham description of intersection space cohomology.
\subsection{Sheaf Theory}
We use sheaf cohomology to prove a result about ordinary relative singular and de Rham cohomology. Basics about sheaves and sheaf cohomology can be found in \cite{Bredon_ST}. We recall only the notion of supports:
\begin{df}(see \cite[Def. I-6.1]{Bredon_ST})\\
Let $X$ be a topological space. A family of supports on $X$ is a family $\Phi$ of closed subsets of $X$ such that
\begin{enumerate}
\item A closed subset of an element of $\Phi$ is an element of $\Phi$;
\item $\Phi$ is closed under finite unions.
\end{enumerate}
$\Phi$ is a paracompactifying family of supports if in addition
\begin{enumerate}[resume]
\item each element of $\Phi$ is paracompact.
\item each element of $\Phi$ has a (closed) neighbourhood also contained in $\Phi.$
\end{enumerate}
\end{df}
Examples of supports are the family of all closed subsets of $X$, and the family consisting of the empty set. The first is paracompactifying if $X$ is paracompact. If $s \in \A (X)$ is a global section of a sheaf $\A$ on $X$, then $|s| = \{ x \in X | s(x) \neq 0 \}$ denotes its support. The sections of $\A$ with supports in $\Phi$ are defined by
\[
\Gamma_\Phi (\A) := \{ s \in \A (X) | ~ |s| \in \Phi \}.
\]
In the same way one defines $A_\Phi \left( X  \right) := \{ s \in \A (X) | ~ |s| \in \Phi \}$ for the presheaves of differential forms $A = \Omega^\bullet$ and singular cochains with values in some locally constant sheaf $\A$ on $X$, $A = S^\bullet (- ; \A)$. The de Rham and singular cohomology with supports in $\Phi$ is then defined by taking the cohomology groups $H^p \left( \Omega_\Phi^\bullet (X) \right)$ and $H^p \left( S^\bullet_\Phi (X; \A) \right)$. The sheaf cohomology groups with supports in $\Phi$ for the sheaf $\A$ are defined by taking any injective resolution $\A \rightarrow \J^\bullet $ of $\A $ and setting
\[
H_\Phi^r (X;\A) := H^r \left( \Gamma_\Phi (\J^\bullet) \right).
\]

\subsection{Relative Singular Cohomology}\label{subs:relsingcohom}
Before explaining the notions of relative de Rham cohomology, we recall the results of \cite[Chapter III-1]{Bredon_ST} about relative singular cohomology. For our purpose, it is sufficient to consider the reals $\RR$ as base ring for our singular cohomology groups and therefore all sheaves are sheaves of real vector spaces and all tensor products are taken over the reals. In this section, let $X $ denote an arbitrary topological space. Later, we specify $X$ to be a smooth manifold. Let $\A$ be a sheaf on $X$ and let $\Phi$ be a paracompactifying family of supports on $X.$ The singular cohomology groups of $X$ with coefficients in $\A$ and support in the paracompactifying family $\Phi$ are then defined by
\[
{}_S H_\Phi^p (X;\A) := H^p \left( \Gamma_\Phi (\sS^\bullet \otimes \A) \right),
\]
where $\sS^\bullet = \sS^\bullet (X;\RR)$ is the sheafification of singular cochains.
Note that these cohomology groups agree with the regular singular cohomology groups with real coefficients $H_S^\bullet (X;\RR)$ for $\A =\RR $ the constant sheaf and $\Phi$ the family of all closed subsets of $X.$ The ordinary, singular cup product induces a homomorphism 
\[
\cup: {}_S H_\Phi^p (X;\A) \otimes {}_S H_\Psi^q (X;\B) \rightarrow {}_S H_{\Phi \cap \Psi}^{p+q} (X; \A \otimes \B)
\]
with the usual properties (see \cite[Theorem II-7.1]{Bredon_ST}). If $X$ is \emph{HLC} (= singular homology locally connected), e.g. $X$ a manifold or more generally a CW complex, then there is a multiplicative isomorphism between sheaf cohomology and singular cohomology groups:
\begin{equation}
\theta: H_\Phi^\bullet (X;\A) \xrightarrow{\cong} {}_S H_\Phi^\bullet (X;\A)~,
\label{eq:theta_sing_sheaf}
\end{equation}
(see \cite[pp. 180-181]{Bredon_ST} for a more detailed explanation).

To define relative singular cohomology with coefficients in the sheaf $\A$, let $ F \subset X$ be a closed subspace and consider the homomorphism $ \Gamma_\Phi (\sS^\bullet (X;\RR) \otimes \A) \twoheadrightarrow \Gamma_{\Phi|F} (\sS^\bullet (F;\RR) \otimes \A|F).$ It is a surjection, since the kernel of the epimorphism $( \sS^\bullet (X;\RR) \otimes \A)|F \twoheadrightarrow \sS^\bullet (F;\RR) \otimes \A$, which is induced by a restriction morphism, is an $\sS^0 (X;\RR)|F$-module and hence $\Phi|F$-soft. Therefore, \cite[Theorem II 9.9]{Bredon_ST} is applicable.

Let $K_\Phi^\bullet (X,F;\A) $ denote the kernel of this map and define the relative singular cohomology groups of the pair $(X,F)$ with coefficients in $\A$ and supports $\Phi$ by
\[
{}_S H_\Phi^\bullet (X,F;\A) := H^\bullet \left( K_\Phi^\bullet (X,F;\A) \right).
\]
By definition, one gets the usual long exact sequence of a pair.
\[
\dots \ \rightarrow {}_S H^p_\Phi (X,F;\A) \rightarrow {}_S H^p_\Phi (X;\A)\rightarrow {}_S H^p_{\Phi|F} (F;\A|F)  \xrightarrow{+1} \ \dots
\]
Let $U_F = X -F $ denote the complement of the closed set $F$ and $\A_{U_F}$ the extension by zero to $X$ of the restriction $\A|U_F,$ see \cite[I 2.6]{Bredon_ST}.
The morphism 
\[
\Gamma_\Phi \left(  \sS^\bullet (X;R) \otimes \A_{U_F}  \right) \rightarrow K_\Phi^\bullet (X,F;\A)
\] 
induces an isomorphism on cohomology, which follows by a 5-Lemma argument (see \cite[p. 183]{Bredon_ST} for details). By this isomorphism we can introduce a relative cup product given by the following composition 
\[
\begin{tikzcd}
{}_S H_\Phi^p (X,F;\A) \otimes {}_S H_\Psi^q (X,L;\B) \ar{r}{\cup} \ar{d}{\cong} &  {}_S H_{\Phi\cap\Psi}^{p+q} (X,F\cup L; \A \otimes \B) \\
{}_S H_\Phi^p (X;\A_{U_F}) \otimes {}_S H_\Psi^q (X;\B_{U_L}) \ar{r}{\cup} & {}_S H_{\Phi\cap\Psi}^{p+q} (X, \A_{U_F} \otimes \B_{U_L}) \ar{u}{\cong}
\end{tikzcd}
\]
The vertical map on the right is induced by the inclusion $ \A_{U_F} \otimes \B_{U_L} \hookrightarrow \A \otimes \B,$ $F,L \subset X$ are closed, and $U_F = X-F, U_L = X-L.$
This coincides with the ordinary relative cup product in singular cohomology for $\A = \B = \RR$ and $\Phi, \Psi$ the families of all closed subsets of $X.$ Also, together with the long exact cohomology sequence for sheaf cohomology of pairs and the maps $\theta$ of (\ref{eq:theta_sing_sheaf}), this definition gives a multiplicative isomorphism
\[
\theta: H_\Phi^\bullet (X,F;\A) \xrightarrow{\cong} {}_S H_\Phi (X,F;\A)
\]
for $X,F$ both \emph{HLC}.

Note that if $X, F$ are smooth manifolds, one can use smooth singular cochains instead of continuous ones. To see this, let $ S^\bullet_\infty (X; \RR) $ denote the complex of smooth singular cochains with coefficients in $\RR$ and let $\rho: S^\bullet (X;\RR) \twoheadrightarrow S^\bullet_{\infty} (X;\RR) $ denote the restriction of the complex of all singular cochains to smooth ones. 

Then, one gets a map of sheaves $\rho: \sS^\bullet (X;\RR) \otimes \A \rightarrow \sS^\bullet_{\infty} (X;\RR) \otimes \A, $ which we also denote by $ \rho.$ Further, for any sheaf $ \A $ on $X$ and any family of supports $\Phi$, one gets a map $\rho: \Gamma_{\Phi} \left( \sS^\bullet (X;\RR) \otimes \A \right) \rightarrow \Gamma_{\Phi} \left( \sS^\bullet_{\infty} (X;\RR) \otimes \A \right). $ 
Note, that for $\Phi$ paracompactifying, all the sheaves $\sS^r (X;\RR)$ and $\sS^r_\infty (X;\RR)$ are $\Phi$-soft as modules over the sheaf of continuous respectively smooth real valued functions, which are $\Phi$-soft by a standard partition of unity argument. Hence, the sheaves $\sS^\bullet (X;\RR) \otimes \A$ and $\sS^\bullet_\infty (X;\RR) \otimes \A$ are resolutions of $\A$ by $\Phi$-soft sheaves and $\rho$ induces an isomorphism on cohomology groups by \cite[II-4.2]{Bredon_ST}, 
\[
\rho^*: {}_S H_\Phi^\bullet (X;\A) \xrightarrow{\cong} {}_S^{\infty} H_\Phi^\bullet (X;\A).
\]
Here, ${}_S^{\infty} H_\Phi^\bullet (X;\A) = H^\bullet \left( \Gamma_\Phi \left( \sS^\bullet_{\infty} (X;\RR) \otimes \A \right) \right).$
Let $K_{\Phi, \infty}^\bullet (X,F; \A) $ denote the kernel of the surjection 
\[
\Gamma_\Phi \left( \sS^\bullet_{\infty} (X;\RR) \otimes \A \right) \twoheadrightarrow \Gamma_{\Phi|F} \left( \sS^\bullet_{\infty} (F;\RR) \otimes \A|F \right) 
\]
and define the smooth relative singular cohomology groups with values in $\A$ by
\[
{}_S^{\infty}H_\Phi^\bullet \left( X,F; \A \right) := H^\bullet \left( K^\bullet_{\Phi, \infty} (X,F;\A) \right).
\]
Again, there is a long exact sequence of the pair $(X,F)$ and we get a cup product on the relative smooth singular cohomology groups, that coincides with the regular one for $\A = \RR $ and $\Phi $ the family of all closed subsets. 

Restriction of singular cochains to smooth chains induces a multiplicative isomorphism on cohomology as follows from the following commutative diagram
\[
\begin{tikzcd}
{}_S H_\Phi^\bullet (X;\A_{U_F}) \ar{d}{\cong}[swap]{\text{mult}} \ar{r}{\rho^*}[swap]{\cong, \text{mult}} & {}_S^\infty H_\Phi^\bullet (X;\A_{U_F}) \ar{d}{\cong}[swap]{\text{mult}} \\
{}_S H_\Phi^\bullet (X,F;\A) \ar{r}{\rho^*} & {}_S^\infty H_\Phi^\bullet (X,F;\A)~,
\end{tikzcd}
\]
where the vertical map on the right is a multiplicative isomorphism analogously to the non-smooth case.

\subsection{Relative de Rham Cohomology}
To consider de Rham cohomology, we need smooth manifolds. We prove a relative version of de Rham's Theorem for the following pairs of smooth manifolds (possibly with boundary). Let \( M^n \) be a smooth manifold and \( F^m \subset M \) a submanifold of dimension \( m \) which is closed as a subspace (not necessarily as a manifold). The pair $(M,F)$ might be compact (or $M$ open and $F$ compact, or both non-compact manifolds). We only consider submanifolds that are closed subsets, since then the relative sheaf cohomology groups can be replaced by absolute cohomology groups of the complement.

If two different submanifolds $F^m, L^m \subset M$ occur, we demand that their union $F \cup L \subset M$ is also a submanifold. This is relevant later to insure that relative cup products on de Rham cohomology actually have a well-defined target.

Let  \( \A \) be a sheaf of  \( \RR - \)modules on \( M \). 
The de Rham presheaves on $M$ are given by the assignments $U \mapsto \Omega^r (U)$, where $\Omega^r (U)$ is the set of smooth differential $r$-forms on the open set $U$ (of $M$ respectively $F$). This gives conjunctive monopresheaves and hence sheaves. Let $\Omega^r (M)$ denote the so defined sheaf on $M$ and $\Omega^r (F)$ the corresponding sheaf on the manifold $F$. In contrast, $\Omega^r (M)|F$ denotes the restriction of the sheaf $\Omega^r (M)$ to the subspace $F$.
The de Rham cohomology with coefficients in \( \A \) is defined as
\[ 
_{\Omega}H_{\Phi}^\bullet (M ; \A) := H^\bullet \left ( \Gamma_\Phi (\Omega^\bullet \otimes \A ) \right ).
\]
The wedge product \( \wedge: \Omega^p (U) \otimes \Omega^q (U) \rightarrow \Omega^{p+q} (U), ~ U \subset M \) open, induces a cup product on \( _\Omega H^\bullet_\Phi (M;\A). \) 
To define the relative de Rham cohomology groups, we note that the restriction homomorphism \( i^*: \Omega^\bullet (U) \rightarrow \Omega^\bullet ( U \cap F ) \) is surjective
and hence induces an epimorphism \( \left ( \Omega^\bullet (M) \otimes \A \right )|F \twoheadrightarrow \Omega^\bullet (F) \otimes \A|F \) of sheaves on \( F \). Since the kernel of this homomorphism is an \( \Omega^0 (M)|F- \)module and hence \( \Phi|F-\)soft by \cite[Theorem II 9.16]{Bredon_ST}, we get an epimorphism
\[
\Gamma_\Phi \left ( \Omega^\bullet (M) \otimes \A_F \right) = \Gamma_{\Phi|F} \left ( (\Omega^\bullet (M) \otimes \A)|F \right)  \twoheadrightarrow \Gamma_{\Phi|F} \left ( \Omega^\bullet (F) \otimes \A|F \right)
\]
of chain complexes by \cite[Theorem II 9.9]{Bredon_ST}. The kernel of $\Omega^\bullet (M) \otimes \A \to \Omega^\bullet (M) \otimes \A_F$ is $\Omega^\bullet (M) \otimes A_U,$ a $\Phi$-soft sheaf, because of \cite[II 9.18]{Bredon_ST} and the fact that $\Omega^\bullet (M)$ is $\Phi$-fine and hence $\Phi$-soft (for $\Phi$ paracompactifying). Then, again by \cite[Theorem II 9.9]{Bredon_ST}, the map \( \Gamma_\Phi ( \Omega^\bullet (M) \otimes \A) \rightarrow \Gamma_\Phi ( \Omega^\bullet (M)  \otimes \A_F) \) is also onto. Both epimorphisms combine to an epimorphism
\[
\Gamma_\Phi ( \Omega^\bullet (M) \otimes \A) \twoheadrightarrow \Gamma_{\Phi|F} (\Omega^\bullet (F) \otimes \A|F).
\] 
We let \( Q_\Phi^\bullet (M,F;\A) \) denote the kernel of this epimorphism and define the relative de Rham cohomology with coefficients in the sheaf \( \A \) as follows.
\begin{df}[Relative de Rham Cohomology]
	The relative de Rham cohomology of the pair \( (M,F) \) of smooth manifolds, $F \subset M$ closed as a subset, with coefficients in the sheaf \( \A \), is defined by
	\[
	_\Omega H_\Phi^\bullet (M,F;\A) := H^\bullet \left ( Q_\Phi^\bullet (M,F;\A) \right).
	\] 
\end{df}

As for the relative singular cohomology groups, we want to relate these groups to the absolute groups and the sheaf-theoretic cohomology groups. To do so, we note that for \( \Phi \) paracompactifying \( \Omega^\bullet \otimes \A \) is a resolution of \( \A \) by \( \Phi-\)fine sheaves and hence there is a natural isomorphism
\[
\rho: {}_\Omega H^\bullet_\Phi (M;\A) \rightarrow H^\bullet_\Phi (M;\A),
\]
which preserves cup products (see \cite[II-5.15 and II-7.1]{Bredon_ST} for details). If \( f \in C^\infty (F,M) \) is a smooth map between the smooth manifolds \( F,M \) and \( \Phi, \Psi \) are paracompactifying families  on \( M \) and \( F \) respectively, such that \( f^{-1} \Psi \subset \Phi \) and \( \A \) is a sheaf on \( F \), we get a commutative diagram
\begin{equation}
\label{diag:pullback}
\begin{tikzcd}
	_\Omega H_\Phi^\bullet (M;\A) \ar{r} \ar{d}{f^*} & H^\bullet_\Phi (M; \A) \ar{d}{f^*} \\
	_\Omega H_\Psi^\bullet (F;f^* \A) \ar{r}  & H^\bullet_\Psi (F; f^* \A)
\end{tikzcd}
\end{equation} 
in analogy to singular cohomology (compare to the similar diagram for singular cohomology on \cite[p. 182]{Bredon_ST}).

\begin{lem} \label{cor:jstar_isom}
Let \( j: F \hookrightarrow M \) be the inclusion of the submanifold \( F \subset M \), which is closed as a subspace. Let \( \Phi \) be a paracompactifying family of supports on \( M \) and let \( \A \) be a sheaf on \( M \). Then the map
\[
j^*: {}_\Omega H_\Phi^\bullet (M; \A_F) \rightarrow {}_\Omega H_{\Phi|F}^\bullet (F;\A|F)
\]
is an isomorphism and preserves cup products.
\end{lem}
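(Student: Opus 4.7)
The plan is to reinterpret $j^*$ as a morphism of $\Phi|F$-soft resolutions of the sheaf $\A|F$ on $F$, and then to derive multiplicativity from naturality of the wedge product.

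First I would unpack the identification
\[
\Gamma_\Phi\bigl(\Omega^\bullet(M) \otimes \A_F\bigr) \;=\; \Gamma_{\Phi|F}\bigl((\Omega^\bullet(M) \otimes \A)|F\bigr)
\]
already noted in the construction of relative de Rham cohomology above, which rewrites the source of $j^*$ as $\Phi|F$-supported sections over $F$ of a complex of sheaves on $F$. The Poincar\'e lemma gives that $(\Omega^\bullet(M)\otimes \A)|F$ is a resolution of $\A|F$ stalkwise, since $\RR$ is a field and tensoring with $\A_x$ preserves exactness; similarly $\Omega^\bullet(F) \otimes \A|F$ is a resolution of $\A|F$ on $F$. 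The restriction of forms $i^\#: \Omega^\bullet(M)|F \to \Omega^\bullet(F)$ induces a map between these two resolutions covering $\id_{\A|F}$ in degree zero.

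Next I would verify $\Phi|F$-softness of both resolutions. The sheaves $\Omega^r(M)$ are $\Phi$-fine via smooth partitions of unity on $M$, so their restrictions to $F$ are $\Phi|F$-soft, and the same partition-of-unity argument intrinsic to $F$ gives $\Phi|F$-softness of $\Omega^r(F)$. Tensoring with $\A|F$ preserves softness by \cite[II-9.16]{Bredon_ST}. Bredon's comparison theorem \cite[II-4.2]{Bredon_ST} for morphisms of $\Phi|F$-soft resolutions of a common sheaf then yields an isomorphism on the cohomology of $\Gamma_{\Phi|F}$-sections, and chasing through the identifications shows this isomorphism is exactly $j^*$.

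For multiplicativity, I would use the fact that the cup product on each side is induced on cochain level by the wedge product of forms tensored with the pairing on coefficients, and the restriction of forms satisfies $i^\#(\omega \wedge \eta) = i^\#\omega \wedge i^\#\eta$. After the canonical stalkwise identification $\A_F \otimes \B_F \cong (\A \otimes \B)_F$ (or $\A_F \otimes \A_F \cong (\A \otimes \A)_F$ as appropriate), the map $j^*$ is thus multiplicative already on cochain level, and hence preserves cup products on cohomology. I expect the main piece of care to be bookkeeping: checking that the abstract isomorphism coming from Bredon's comparison matches the concrete restriction map $j^*$ after all identifications, and that the cup product on ${}_\Omega H_\Phi^\bullet(M;\A_F)$ is indeed computed by the wedge product under the identification with sections supported in $F$. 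Everything else is formal.
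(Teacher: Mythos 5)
Your argument is correct, but it follows a different route than the paper. The paper first invokes the natural multiplicative comparison isomorphism $\rho: {}_\Omega H_\Phi^\bullet(M;\A) \to H_\Phi^\bullet(M;\A)$ together with the commutativity of Diagram (\ref{diag:pullback}), thereby reducing the whole lemma to the corresponding statement for sheaf cohomology of a closed subspace, which is then quoted directly from \cite[Corollary II-10.2]{Bredon_ST}. You instead stay entirely inside the de Rham model: after the identification $\Gamma_\Phi(\Omega^\bullet(M)\otimes\A_F)=\Gamma_{\Phi|F}\bigl((\Omega^\bullet(M)\otimes\A)|F\bigr)$ you exhibit both $(\Omega^\bullet(M)\otimes\A)|F$ and $\Omega^\bullet(F)\otimes\A|F$ as $\Phi|F$-soft resolutions of $\A|F$ (the stalkwise exactness via the Poincar\'e lemma over the field $\RR$ and the softness via the $\Omega^0$-module structure are both fine, and are the same softness arguments the paper uses elsewhere), and you apply the abstract comparison theorem \cite[II-4.2]{Bredon_ST} to the restriction morphism of resolutions covering $\id_{\A|F}$. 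This buys you something the paper's proof does not make explicit: multiplicativity holds already at the cochain level, since $j^\#(\omega\wedge\eta)=j^\#\omega\wedge j^\#\eta$ and $\A_F\otimes\B_F\cong(\A\otimes\B)_F$, whereas the paper inherits multiplicativity from $\rho$ and the sheaf-level statement. The price is the bookkeeping you already anticipate, plus two small points you should state rather than leave implicit: that $\Phi|F$ is again paracompactifying because $F$ is closed in $M$ (needed for soft $\Rightarrow$ $\Phi|F$-acyclic), and that the abstract isomorphism from \cite[II-4.2]{Bredon_ST} is induced precisely by the given chain map between the two resolutions (its naturality statement), so that it really is $j^*$. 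Neither is a gap, just a line each.
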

\begin{proof} 
For arbitrary smooth manifolds \( M \) and arbitrary sheaves \( \A \) on \( M \) the map \( \rho: {}_\Omega H^\bullet_\Phi (M;\A) \rightarrow H^\bullet_\Phi (M;\A) \) is an isomorphism that preserves cup products. Since Diagram (\ref{diag:pullback}) commutes, it suffices to show that the map \( j^*: H_\Phi^\bullet (M; \A_F) \rightarrow H_{\Phi|F}^\bullet (F;\A|F) \) is an isomorphism of sheaf cohomology groups and preserves cup products. But this is essentially the statement of \cite[Corollary II-10.2]{Bredon_ST}.
\end{proof}

The exact sequence \( 0 \rightarrow \A_{U_F} \rightarrow \A \rightarrow \A_F \rightarrow 0 \) of sheaves induces an exact sequence 
\( 0 \rightarrow \Gamma_\Phi \left (\Omega^\bullet (X) \otimes \A_{U_F} \right ) \rightarrow \Gamma_\Phi \left (\Omega^\bullet (X) \otimes \A \right ) \rightarrow \Gamma_\Phi \left (\Omega^\bullet (X) \otimes \A_F \right ). \) Hence, the first morphism gives rise to a (trivially multiplicative) map \[ \Gamma_\Phi \left (\Omega^\bullet (X) \otimes \A_{U_F} \right ) \rightarrow Q_\Phi^\bullet (M,F; \A). \]
Together with the (multiplicative) map \[ j^*: \Gamma_\Phi \left (\Omega^\bullet (X) \otimes \A_F \right ) \rightarrow \Gamma_{\Phi|F} \left ( \Omega^\bullet (F) \otimes \A|F \right ), \]induced by the submanifold inclusion \( j: F \hookrightarrow M, \) we get a commutative diagram
\[ \begin{tikzcd}[column sep=small]
\Gamma_\Phi \left ( \Omega^\bullet (M) \otimes \A_{U_F} \right) \ar[hook]{r} \ar{d} & \Gamma_\Phi \left ( \Omega^\bullet (M) \otimes \A \right) \ar[two heads]{r} \ar{d}{=} & \Gamma_\Phi \left ( \Omega^\bullet (M) \otimes \A_F \right) \ar{d}{j^*} \\
Q_\Phi^\bullet (M,F; \A) \ar[hook]{r} & \Gamma_\Phi \left ( \Omega^\bullet (M) \otimes \A \right) \ar[two heads]{r} & \Gamma_{\Phi|F} \left ( \Omega^\bullet (F) \otimes \A|F \right) 
\end{tikzcd}
\]
We consider the induced diagram on cohomology:
\[ \begin{tikzcd}[column sep=small]
\dots \ar{r} & {}_\Omega H_\Phi^p \left ( M; \A_{U_F} \right) \ar{r} \ar{d} & {}_\Omega H_\Phi^p \left ( M; \A \right)  \ar{r} \ar{d}{=} & {}_\Omega H_\Phi^p \left ( M; \A_F \right) \ar{r}{+1} \ar{d}{j^*}[swap]{\cong} & \dots \\
\dots \ar{r} &{}_\Omega H_\Phi^p (M,F; \A) \ar{r} & {}_\Omega H_\Phi^p \left ( M; \A \right) \ar{r} & {}_\Omega H_{\Phi|F}^p \left ( F; \A|F \right) \ar{r}{+1} & \dots
\end{tikzcd}
\]
By the statement of the last lemma,
the last vertical map is an isomorphism. The 5-Lemma implies that the first vertical map is also an isomorphism,
which leaves us with the following result.
\begin{prop}\label{prop:relDeRhamtechnicalProp}
Let $(M,F)$ be any pair of smooth manifolds, possibly with boundary, where $F \subset M$ is closed as a subset, and let $ \Phi$ be any paracompactifying family of supports and $\A$ any sheaf. Then, there is an isomorphism
\[
{}_\Omega H_\Phi^p \left ( M; \A_{U_F} \right) \xrightarrow{\cong} {}_\Omega H_\Phi^p (M,F; \A).
\]
In particular, this induces a multiplicative structure on ${}_\Omega H_\Phi^p (M,F; \A), $ which coincides with the multiplicative structure induced by the relative wedge product 
\[ \wedge: \Omega^{p} (U,U\cap L) ~\otimes ~\Omega^q \left( U, U \cap F \right) \rightarrow \Omega^{p+q} \left( U, U \cap (L \cup F) \right) \] 
for $ \A = \RR,$ provided $L,F \subset M$ are two smooth submanifolds that are closed as subsets and such that $L \cup F \subset M$ is also a smooth submanifold.
\end{prop}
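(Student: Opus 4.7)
The isomorphism statement is already almost established in the text preceding the proposition: the Five Lemma applied to the commutative ladder of long exact cohomology sequences there, combined with Lemma \ref{cor:jstar_isom} on the rightmost vertical, directly yields the isomorphism ${}_\Omega H_\Phi^p(M;\A_{U_F}) \xrightarrow{\cong} {}_\Omega H_\Phi^p(M,F;\A)$. My proposal therefore concentrates on verifying that, for $\A = \RR$ and under the submanifold hypothesis on $L \cup F$, the induced product coincides with the one coming from the relative wedge product.

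The plan is to set up a square of cup-product pairings in analogy with the one used for relative singular cohomology in Section \ref{subs:relsingcohom}. The canonical identification $\A_{U_L} \otimes \B_{U_F} \cong (\A \otimes \B)_{U_L \cap U_F} = (\A \otimes \B)_{U_{L \cup F}}$, together with the wedge of forms, produces a cochain-level pairing from $\Gamma_\Phi(\Omega^\bullet(M) \otimes \A_{U_L}) \otimes \Gamma_\Psi(\Omega^\bullet(M) \otimes \B_{U_F})$ into $\Gamma_{\Phi \cap \Psi}(\Omega^\bullet(M) \otimes (\A \otimes \B)_{U_{L \cup F}})$, and this is the pairing whose induced map on cohomology is, by construction, transported along the isomorphism of the first part to produce the cup product on ${}_\Omega H_\Phi^\bullet(M, F; \A)$.

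To identify this product with the relative wedge pairing in the case $\A = \B = \RR$, I would exploit the observation that the cochain-level map $\Gamma_\Phi(\Omega^\bullet(M) \otimes \RR_{U_F}) \hookrightarrow Q_\Phi^\bullet(M, F; \RR)$ is a subalgebra inclusion: a form whose germs vanish on a neighborhood of $F$ pulls back to zero on $F$, and the wedge of two such extension-by-zero sections is itself the extension by zero of their wedge. The resulting square of pairings commutes already at the cochain level, and passing to cohomology gives the desired coincidence of products. The submanifold hypothesis on $L \cup F$ is only invoked to ensure that the target ${}_\Omega H_{\Phi \cap \Psi}^{p+q}(M, L \cup F; \RR)$ is defined in the framework of this section. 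The only potential obstacle is the bookkeeping of the paracompactifying families $\Phi$, $\Psi$, and $\Phi \cap \Psi$ across the diagram, but no substantive difficulty is anticipated, as the argument closely parallels the singular case already treated.
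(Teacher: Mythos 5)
Your argument is correct, but at the decisive step it takes a genuinely different route from the paper's. The paper does not verify commutativity of the square of pairings at the cochain level; instead it checks that the two composites $\alpha \otimes \beta \mapsto \rho(\alpha \cup \beta)$ and $\alpha \otimes \beta \mapsto \rho(\alpha) \cup \rho(\beta)$ agree in degree $0$ (via an explicit description of $H^0$ of the complexes $\Gamma_\Phi ( \Omega^\bullet (M) \otimes \RR_{U_F} )$ and $\Omega^\bullet_\Phi(M,F)$ as kernels of $d$) and then invokes Bredon's uniqueness theorem for natural transformations of connected sequences of functors (Theorem II-6.2 in Bredon) to conclude agreement in all degrees. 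Your proof replaces that abstract step by the concrete observation that $\Gamma_\Phi ( \Omega^\bullet (M) \otimes \RR_{U_F} )$ is exactly the complex of forms with support in $\Phi$ vanishing on a neighbourhood of $F$, that the comparison map into $Q_\Phi^\bullet(M,F;\RR) = \Omega^\bullet_\Phi(M,F)$ is the evident inclusion, and that the wedge product strictly commutes with these inclusions (together with the canonical identification $\RR_{U_F} \otimes \RR_{U_L} \cong \RR_{U_{F\cup L}}$), so the square commutes on the nose before passing to cohomology. Both arguments rest on the same external input, namely that the cup product on ${}_\Omega H_\Phi^\bullet(M;\A)$ is the one induced by the cochain-level wedge (Bredon II-7.1), which the paper's proof also assumes at the point where it asserts the degree-$0$ square commutes; granting that, your cochain-level verification is the more elementary and transparent argument, while the paper's route is the standard sheaf-theoretic one that avoids unwinding sections of $\Omega^\bullet \otimes \RR_{U_F}$. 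One cosmetic remark: what you call a ``subalgebra inclusion'' is really a morphism of pairings, since the two factors involve the different submanifolds $L$ and $F$; this does not affect the argument.
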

\begin{proof}
	What is left is a proof for the last part of the statement. So let $\A = \RR$ and $L,F \subset M$ be as in the proposition. Then the map $ \Gamma_{\Phi} \left( \Omega^\bullet (M) \otimes \A \right) \twoheadrightarrow \Gamma_{\Phi|F} \left( \Omega^\bullet (F) \otimes \A|F  \right)$ coincides with the pullback map $ j^*: \Omega^\bullet_{\Phi} (M) \twoheadrightarrow \Omega^\bullet_{\Phi|F} (F).$ The kernel of this map is the complex $\Omega^\bullet_{\Phi} (M,F) $ of the forms on $M$ that vanish on $F.$ The maps $ \RR_{U_F} \rightarrow \RR$ and $\RR_{U_L} \rightarrow \RR$ of sheaves on $M$ induce maps of complexes of sheaves $\Omega^\bullet (M) \otimes \RR_{U_L} \rightarrow \Omega^\bullet (M) \otimes \RR$ and the same with $F$ instead of $L.$ These maps induce chain maps $\Gamma_\Phi \left ( \Omega^\bullet (M) \otimes \RR_{U_F} \right ) \rightarrow \Gamma_{\Phi} \left( \Omega^\bullet (M) \otimes \RR \right) \cong \Omega_{\Phi}^\bullet (M) $ and the same map for $L$ instead of $F$. Since they factor through $ \Omega_{\Phi}^\bullet ( M, F )$, respectively $\Omega_{\Phi}^{\bullet} (M,L), $ and the differential of these complexes comes from the differential on the total de Rham complexes, we get the following induced maps.
\[
\begin{split}
H^0 \left( \Gamma_{\Phi} \left( \Omega^\bullet (M) \otimes \RR_{U_F}  \right) \right) = \ker d & \rightarrow \ker d|\Omega^\bullet_\Phi (M,F) = {}_\Omega H^0_{\Phi} (M,F),\\
H^0 \left( \Gamma_{\Phi} \left( \Omega^\bullet (M) \otimes \RR_{U_L}  \right) \right) = \ker d & \rightarrow \ker d|\Omega^\bullet_\Phi (M,L) = {}_\Omega H^0_{\Phi} (M,L).
\end{split}
\]
Since the cup product 
\[ \cup: {}_\Omega H_\Phi^p (M; \RR_{U_F}) \otimes {}_\Omega H_\Phi^q (M, \RR_{U_L} ) \rightarrow H_\Phi^{p+q} (M, \RR_{U_F}\otimes \RR_{U_L}) \] 
is induced by the wedge product of forms, we get a commutative diagram
\[
\begin{tikzcd}
{}_\Omega H^0_\Phi (M; \RR_{U_F} ) \otimes {}_\Omega H^0_\Phi (M; \RR_{U_L} ) \ar{r}{\cup} \ar{d}{\rho} & {}_\Omega H^0_\Phi (M; \RR_{U_F} \otimes \RR_{U_L} ) \ar{d}{\rho} \\
{}_\Omega H^0_{\Phi} (M,F) \otimes {}_\Omega H^0_{\Phi} (M,L) \ar{r}{\cup} & {}_\Omega H^0_{\Phi} (M,F \cup L).
\end{tikzcd}
\]
This allows us to apply \cite[Theorem II-6.2]{Bredon_ST} to the two natural transformations $\alpha \otimes \beta \mapsto \rho \left( \alpha \cup \beta \right)$ and $\alpha \otimes \beta \mapsto \rho (\alpha) \cup \rho (\beta)$ from the top left corner of the diagram to the bottom right corner. 
\end{proof}

\subsection{Relative de Rham Map}
\label{subs:rel_derham_map}
In \cite[Chapter III-3]{Bredon_ST}, Bredon proves that the classical de Rham map $k: \Omega^\bullet (M) \rightarrow S_\infty^\bullet (M;\RR), $ defined by integrating forms over smooth chains, induces a multiplicative homomorphism
\[
k^*: {}_{\Omega}H_\Phi^\bullet \left( M; \A \right) \rightarrow {}_S^\infty H_\Phi^\bullet (M;\A),
\]
which is an isomorphism for $\Phi$ paracompactifying and coincides with the usual de Rham isomorphism for $\A = \RR.$ Let $ F \subset M$ be a smooth submanifold, closed as a subspace as above, and let $j: F \hookrightarrow M.$ For each open set $U \subset M$ we get a commutative diagram
\[
\begin{tikzcd}
\Omega^\bullet (U) \ar{r}{j|_U^{\#}} \ar{d}{k_M} & \Omega^\bullet (U \cap F) \ar{d}{k_F} \\
 S_\infty^\bullet (U;\RR) \ar{r}{j|_U^{\#}} & S_\infty^\bullet (U\cap F;\RR)
\end{tikzcd}
\]
where all the maps commute with the corresponding restriction maps. Hence, for any sheaf $\A$ on $M$ this induces a commutative diagram
\[
\begin{tikzcd}
\Gamma_\Phi \left( \Omega^\bullet \otimes \A \right) \ar{d}{k_M} \ar[two heads]{r} & \Gamma_{\Phi|F} \left( \Omega^\bullet \otimes \A|F \right) \ar{d}{k_F}   \\
\Gamma_\Phi \left( \sS_\infty^\bullet (M;\RR) \otimes \A \right)\ar[two heads]{r}& \Gamma_{\Phi|F} \left( \sS_\infty^\bullet (F;\RR) \otimes \A|F \right)~.
\end{tikzcd}
\]
This implies that $k_M$ factors through the kernels of the horizontal maps. That means it induces a relative de Rham morphism
\begin{equation}
k: Q_\Phi \left( M,F; \A \right) \rightarrow K_{\Phi,\infty} \left( M, F ;\A \right).
\label{eq:rel_derham_map}
\end{equation}
\begin{thm}[Relative de Rham Theorem] \label{thm:rel_dR_mult}
	Let $(M,F)$ be a pair of smooth manifolds, $F \subset M$ closed as a subset, let $\A$ be a sheaf on $M$ and $\Phi$ a paracompactifying family of supports and let $k:Q_\Phi \left( M,F; \A \right) \rightarrow K_{\Phi, \infty} \left( M, F ;\A \right) $ denote the relative de Rham map. Then, the induced map on cohomology
 \[
k^*: {}_\Omega H_\Phi^\bullet (M,F; \A) \rightarrow {}_S^\infty H_\Phi^\bullet (M,F;\A)
\]
is a multiplicative isomorphism.
\end{thm}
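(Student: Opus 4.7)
The plan is to split the statement into the two assertions (isomorphism and multiplicativity) and in each case reduce to the corresponding absolute de Rham result already invoked from Bredon.

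For the isomorphism part, the definitions of $Q_\Phi$ and $K_{\Phi,\infty}$ as kernels of surjections assemble, together with the relative de Rham map $k$, into a commuting ladder of short exact sequences of complexes
\[
\begin{tikzcd}[column sep=small]
0 \rar & Q_\Phi(M,F;\A) \rar \dar{k} & \Gamma_\Phi(\Omega^\bullet \otimes \A) \rar \dar{k_M} & \Gamma_{\Phi|F}(\Omega^\bullet \otimes \A|F) \rar \dar{k_F} & 0 \\
0 \rar & K_{\Phi,\infty}(M,F;\A) \rar & \Gamma_\Phi(\sS^\bullet_\infty \otimes \A) \rar & \Gamma_{\Phi|F}(\sS^\bullet_\infty \otimes \A|F) \rar & 0
\end{tikzcd}
\]
Bredon's absolute de Rham theorem, applied with the paracompactifying family $\Phi$ on $M$ and $\Phi|F$ on $F$, says that $k_M$ and $k_F$ are quasi-isomorphisms. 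Passing to long exact cohomology sequences and invoking the 5-lemma yields that $k^\ast$ is an isomorphism in every degree.

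For multiplicativity, I would invoke Proposition \ref{prop:relDeRhamtechnicalProp} together with its singular analogue described at the end of Subsection \ref{subs:relsingcohom} to identify source and target of $k^\ast$ with absolute cohomology groups with coefficients in the extension-by-zero sheaves $\A_{U_F}$, $\A_{U_L}$ and $\A_{U_F} \otimes \A_{U_L}$. Under these identifications the relative cup product on both the de Rham and the singular sides is obtained by transport of the corresponding absolute cup product along the inclusion $\A_{U_F} \otimes \A_{U_L} \hookrightarrow \A \otimes \A$. Since Bredon's absolute de Rham isomorphism is multiplicative in the coefficient sheaf, multiplicativity of $k^\ast$ then follows once one checks that the chain-level identification $\Gamma_\Phi(\Omega^\bullet \otimes \A_{U_F}) \to Q_\Phi(M,F;\A)$ and its singular analogue commute with the componentwise de Rham maps; this is a direct diagram chase, since both factorisations come from the canonical inclusion into the kernel of the restriction map.

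The main obstacle is not any single calculation but rather the bookkeeping: one must verify that each link in the cascade of identifications—relative cohomology to absolute cohomology with $\A_{U_F}$-coefficients, then absolute de Rham to absolute singular via Bredon's theorem—is genuinely multiplicative and commutes with $k^\ast$, rather than being compatible only additively. Once all naturality squares are established the multiplicativity claim reduces cleanly to Bredon's multiplicative absolute de Rham theorem applied with coefficients in the extension-by-zero sheaves.
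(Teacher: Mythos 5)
Your proposal is correct and follows essentially the same strategy as the paper: the multiplicativity (the substantive point) is reduced, exactly as in the paper, to Bredon's multiplicative absolute de Rham theorem for the extension-by-zero coefficients $\A_{U_F}$ via the chain-level compatibility of the inclusions $\Gamma_\Phi(\Omega^\bullet(M)\otimes\A_{U_F})\to Q_\Phi^\bullet(M,F;\A)$ (and the singular analogue) with the componentwise de Rham maps. The only minor divergence is that you obtain the isomorphism statement separately by a 5-Lemma applied to the ladder of short exact sequences defining $Q_\Phi^\bullet$ and $K_{\Phi,\infty}^\bullet$, whereas the paper reads it off from the same commuting square used for multiplicativity (the vertical identifications being isomorphisms by Proposition \ref{prop:relDeRhamtechnicalProp} and its singular counterpart); both routes are valid and of equal weight.
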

\begin{proof}
As before, let $ U_F := M-F \subset M,$ which is an open subset of $M.$ We prove that the following diagram commutes.
\begin{equation} \label{relDRDiagram}
\begin{tikzcd}
{}_\Omega H_\Phi^\bullet (M; \A_{U_F} ) \ar{r}{k}[swap]{\cong} \ar{d}{\cong} & {}_S^\infty H_\Phi^\bullet (M; \A_{U_F} ) \ar{d}{\cong} \\
{}_\Omega H_\Phi^\bullet (M, F ; \A ) \ar{r}{k} & {}_S^\infty H_\Phi^\bullet (M,F; \A)
\end{tikzcd}
\end{equation}
This will complete the proof, since the vertical maps are clearly multiplicative and the isomorphism $k$ on the top is multiplicative by \cite[Theorem III 3.1]{Bredon_ST}.

Since the absolute de Rham morphism $k: \Gamma_{\Phi} \left( \Omega^\bullet (M) \otimes \A \right) \rightarrow \Gamma_\Phi \left( \sS_\infty^\bullet (M;\RR) \otimes \A \right)$ is natural, the following diagram commutes.
\[
\begin{tikzcd}
\Gamma_{\Phi} \left( \Omega^\bullet (M) \otimes \A_{U_F} \right) \ar{r}{k} \ar{d}{\tau} & \Gamma_\Phi \left( \sS_\infty^\bullet (M;\RR) \otimes \A_{U_F} \right) \ar{d}{\tau} \\
\Gamma_{\Phi} \left( \Omega^\bullet (M) \otimes \A \right) \ar{r}{k} & \Gamma_\Phi \left( \sS_\infty^\bullet (M;\RR) \otimes \A \right).
\end{tikzcd}
\]
By definition of the relative de Rham morphism, the following diagram also commutes.
\[
\begin{tikzcd}
	Q_{\Phi}^\bullet \left( M,F;\A \right) \ar{r}{k} \ar{d}{\sigma} & K_{\Phi, \infty}^\bullet \left( M,F; \A\right) \ar{d}{\sigma} \\
\Gamma_{\Phi} \left( \Omega^\bullet (M) \otimes \A \right) \ar{r}{k} & \Gamma_\Phi \left( \sS_\infty^\bullet (M;\RR) \otimes \A \right).
\end{tikzcd}
\]
The diagrams above can be combined as follows.
\[
\begin{tikzcd}
\Gamma_{\Phi} \left( \Omega^\bullet (M) \otimes \A_{U_F} \right) \ar{r}{k} \ar[bend right=70]{dd}[swap]{\tau} \ar{d} & \Gamma_\Phi \left( \sS_\infty^\bullet (M;\RR) \otimes \A_{U_F} \right) \ar{d} \ar[bend left=70]{dd}{\tau} \\
Q_{\Phi}^\bullet \left( M,F;\A \right) \ar{r}{k} \ar{d}{\sigma} & K_{\Phi, \infty}^\bullet \left( M,F; \A\right) \ar{d}{\sigma} \\
\Gamma_{\Phi} \left( \Omega^\bullet (M) \otimes \A \right) \ar{r}{k} & \Gamma_\Phi \left( \sS_\infty^\bullet (M;\RR) \otimes \A \right).
\end{tikzcd}
\]
By the previous statements, the bottom square and the exterior big square commute.
Since the $\sigma$'s are subcomplex inclusions by definition, this gives the desired commutativity of the top square, which induces Diagram (\ref{relDRDiagram}) on cohomology.
\end{proof}

\begin{cor}
	The composition of the classical relative de Rham map $k: \Omega^\bullet (M,F) \rightarrow S_\infty^\bullet (M,F;\RR), $ defined by integration of relative forms over smooth chains, and Lee's smoothing operator $s^*: S_\infty^\bullet (M,F;\RR) \to S^\bullet (M,F;\RR)$, see \cite[pp. 474 ff]{Lee}, induces a multiplicative isomorphism on cohomology.
	\[ s^* \circ k^*: H_{DR}^r (M,F) \xrightarrow{\cong} H_{S, \infty}^r (M,F;\RR) \xrightarrow{\cong} H_S^r (M,F; \RR). \]
\end{cor}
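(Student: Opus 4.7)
The plan is to deduce this corollary directly from Theorem \ref{thm:rel_dR_mult} combined with the comparison between smooth and continuous relative singular cohomology established at the end of Section \ref{subs:relsingcohom}.

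First I would specialize Theorem \ref{thm:rel_dR_mult} to the constant sheaf $\A = \RR$ and the paracompactifying family $\Phi$ of all closed subsets of $M$ (which is paracompactifying since any smooth manifold is paracompact). Unwrapping the definitions, the complex $Q_\Phi^\bullet(M,F;\RR)$ is naturally isomorphic to the classical complex $\Omega^\bullet(M,F)$ of forms on $M$ whose pullback to $F$ vanishes, so that ${}_\Omega H_\Phi^\bullet(M,F;\RR) = H_{DR}^\bullet(M,F)$. Analogously $K_{\Phi,\infty}^\bullet(M,F;\RR) = S_\infty^\bullet(M,F;\RR)$, and hence ${}_S^\infty H_\Phi^\bullet(M,F;\RR) = H_{S,\infty}^\bullet(M,F;\RR)$. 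Under these identifications Theorem \ref{thm:rel_dR_mult} produces a multiplicative isomorphism $k^*: H_{DR}^\bullet(M,F) \xrightarrow{\cong} H_{S,\infty}^\bullet(M,F;\RR)$ which, by the compatibility chain of diagrams used in its proof, coincides with the map induced by the classical integration pairing on cochain level.

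Next I would handle $s^*$. Section \ref{subs:relsingcohom} (the paragraph establishing the diagram involving $\rho$ together with the 5-Lemma applied to the long exact sequences of the pair) showed that the restriction $\rho^*: {}_S H_\Phi^\bullet(M,F;\A) \to {}_S^\infty H_\Phi^\bullet(M,F;\A)$ is a multiplicative isomorphism for every paracompactifying $\Phi$ and every sheaf $\A$. Specialising again to $\A=\RR$ and $\Phi$ the family of all closed subsets yields a multiplicative isomorphism $\rho^*: H_S^\bullet(M,F;\RR) \xrightarrow{\cong} H_{S,\infty}^\bullet(M,F;\RR)$. By the construction recalled from Lee, $s$ is a chain-homotopy inverse of $\rho$, so on cohomology $s^*$ is the two-sided inverse of $\rho^*$ and therefore itself a multiplicative isomorphism.

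Composing the two multiplicative isomorphisms yields the claimed $s^* \circ k^*: H_{DR}^r(M,F) \xrightarrow{\cong} H_{S,\infty}^r(M,F;\RR) \xrightarrow{\cong} H_S^r(M,F;\RR)$. I expect the only step needing genuine attention is the unwrapping that identifies $Q_\Phi^\bullet(M,F;\RR)$ with $\Omega^\bullet(M,F)$ and $K_{\Phi,\infty}^\bullet(M,F;\RR)$ with $S_\infty^\bullet(M,F;\RR)$: here one must verify that for $\Phi$ the family of all closed subsets the functor $\Gamma_\Phi$ coincides with global sections, and that tensoring the sheaves $\Omega^\bullet(M)$ and $\sS^\bullet_\infty(M;\RR)$ with the constant sheaf $\RR$ is inert. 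Once this bookkeeping is in place the rest is formal diagram chasing combined with results already established in the paper.
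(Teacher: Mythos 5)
Your proposal is correct and follows essentially the same route as the paper: specialize Theorem \ref{thm:rel_dR_mult} to $\A = \RR$ and $\Phi$ the family of all closed subsets, identify the sheaf-theoretic relative complexes with the classical ones so that the relative de Rham map of (\ref{eq:rel_derham_map}) becomes integration over smooth simplices, and then use that Lee's smoothing operator is a chain homotopy inverse of the multiplicative restriction $\rho$, so $s^* = (\rho^*)^{-1}$ is multiplicative on cohomology. The extra bookkeeping you flag (that $\Gamma_\Phi$ is global sections and tensoring with the constant sheaf $\RR$ is inert) is exactly the content the paper leaves implicit in the phrase ``in this setting, the relative de Rham map coincides with the classical relative de Rham map.''
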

\begin{proof}
	This follows from Theorem \ref{thm:rel_dR_mult} with $\A = \RR$ and $\Phi $ the family of all closed subsets of $M$ and the results of Section \ref{subs:relsingcohom}. 
	In this setting, the relative de Rham map (\ref{eq:rel_derham_map}) coincides with the classical relative de Rham map $k: \Omega^\bullet (M,F) \rightarrow S_{\infty}^\bullet (M,F)$ defined by
\[
k (\omega) (\sigma) = \int_{\Delta_p} \sigma^{*} \omega,
\]
for $\omega \in \Omega^r (M,F) $ and any smooth $p-$simplex $\sigma$. Since Lee's smoothing operator is a chain homotopy inverse of the restriction map $\rho: S^\bullet (X;\RR) \twoheadrightarrow S^\bullet_\infty (X;\RR),$ which induces a multiplicative isomorphism on cohomology, $s^* = (\rho^*)^{-1}$ is also multiplicative on cohomology. 
\end{proof}

\section{The cohomology ring of $\I$}\label{sect:pullback}
In this section we prove
\begin{thm}\label{isopullback}
The reduced cohomology ring of $\I$ is isomorphic to the cohomology ring of the pullback $Q^{\cdot}$ in the following diagram.
\begin{equation}\label{pullbackQ}
\begin{tikzcd}
 Q^{\cdot} \arrow{r} {q_2}\arrow{d}{q_1}& \rC{\cL} \arrow{d}{i_0^{\tilde{\#}}} \\
\C{\bar{X}} \arrow{r}{g^{\#}}& \C{\tL}.
\end{tikzcd}
\end{equation}
Here $\rC{\cL}$ denotes the reduced cellular cochain complex of $\cL$, specifically realized as the cellular cochain complex relative to the cone point $\C{\cL,c}$. The map $i_0: \tL \to \cL$ denotes the inclusion of $\tL$ as the bottom of the cone, $i_0^{\tilde{\#}}: \rC{\cL} \to \C{\tL}$ denotes the composition 
\[ \rC{\cL} \hookrightarrow \C{\cL} \xrightarrow{i_0^{\#}} \C{\tL}. \] and $g: \tL \to \bar{X}$ is the map defining the intersection space.
\end{thm}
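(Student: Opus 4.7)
The plan is to construct an explicit cochain map $\phi_1: \rC{\I} \to Q^{\cdot}$ via the universal property of the pullback, show it is a bijection of cochain complexes, and then verify that the induced map on cohomology respects cup products.

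Since $\I = \bar{X} \cup_{g} \cL$ is the topological pushout of $g: \tL \to \bar{X}$ and $i_0: \tL \hookrightarrow \cL$, the canonical inclusions $j_1: \bar{X} \hookrightarrow \I$ and $j_2: \cL \to \I$ satisfy $j_1 \circ g = j_2 \circ i_0$. The map $j_2$ sends the cone point of $\cL$ to the cone point of $\I$, its canonical basepoint, so $j_2^{\#}$ restricts to a cochain map $\rC{\I} \to \rC{\cL}$ on cochains relative to the respective cone points. The compatibility identity $g^{\#} \circ j_1^{\#} = i_0^{\tilde{\#}} \circ j_2^{\#}$ then follows from functoriality of cellular cochains, and the universal property of the pullback produces $\phi_1(\gamma) := (j_1^{\#}(\gamma), j_2^{\#}(\gamma))$.

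To show $\phi_1$ is an isomorphism of cochain complexes, I would exploit the CW-decomposition of the pushout: the cells of $\I$ are exactly the cells of $\bar{X}$ together with the cone cells of $\cL$ (one cell of dimension $k+1$ per $k$-cell of $\tL$) and the cone point. This yields the Mayer--Vietoris short exact sequence of cellular chain complexes
\[
0 \to C_{\cdot}(\tL) \xrightarrow{(g_{\#},\,-i_{0\#})} C_{\cdot}(\bar{X}) \oplus \widetilde{C}_{\cdot}(\cL) \xrightarrow{j_{1\#}+j_{2\#}} \widetilde{C}_{\cdot}(\I) \to 0,
\]
whose dual is
\[
0 \to \rC{\I} \xrightarrow{\phi_1} \C{\bar{X}} \oplus \rC{\cL} \xrightarrow{g^{\#} - i_0^{\tilde{\#}}} \C{\tL} \to 0.
\]
This identifies $\rC{\I}$ with $\ker(g^{\#} - i_0^{\tilde{\#}}) = Q^{\cdot}$.

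The main obstacle is multiplicativity of $\phi_1^{*}$. The pullback $Q^{\cdot}$ is not naturally a sub-DGA of $\C{\bar{X}} \oplus \rC{\cL}$, because the cellular cup product is not strictly natural with respect to $g^{\#}$ and $i_0^{\tilde{\#}}$; however, $g^{*}$ and $i_0^{*}$ are ring maps on cohomology, so $H^{\cdot}(Q^{\cdot})$ inherits a well-defined componentwise product from $H^{\cdot}(\bar{X}) \times H^{\cdot}(\rC{\cL})$. To check $\phi_1^{*}$ preserves this product, I would follow the strategy indicated in the introduction and compare three intrinsically multiplicative models: $\widetilde{H}^{\cdot}(\I)$, the relative cohomology $H^{\cdot}(M_g, \tL)$ of the mapping cylinder pair, and $H^{\cdot}(\ker g^{\#})$. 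Since $M_g$ deformation retracts onto $\bar{X}$, the long exact sequence of $(M_g, \tL)$ combined with excision yields ring isomorphisms $\widetilde{H}^{\cdot}(\I) \cong H^{\cdot}(M_g, \tL) \cong H^{\cdot}(\ker g^{\#})$; the projection $Q^{\cdot} \to \ker(g^{\#})$ obtained from the first factor is a quasi-isomorphism by acyclicity of $\rC{\cL}$ (reduced cochains of the cone), and one verifies it is multiplicative on cohomology. Chasing through these identifications and comparing with $\phi_1^{*}$ completes the argument. The principal technical difficulty is that none of these intermediate equivalences is a DGA morphism on cochain level, so cup-product representatives must be tracked with care at each comparison.
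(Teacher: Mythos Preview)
Your outline follows the paper's architecture closely (construct $\phi_1$ by the universal property, show it is an isomorphism of cochain complexes, then compare with $\ker g^{\#}$ and the mapping cylinder pair), but there is a genuine gap in how you equip $H^{\cdot}(Q^{\cdot})$ with a ring structure.

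You assert that although $Q^{\cdot}$ is not a sub-DGA of $\C{\bar{X}}\oplus\rC{\cL}$, the cohomology $H^{\cdot}(Q^{\cdot})$ ``inherits a well-defined componentwise product from $H^{\cdot}(\bar{X})\times H^{\cdot}(\rC{\cL})$''. This does not work: cohomology does not commute with pullbacks, and the map $q_1^{*}\oplus q_2^{*}:H^{\cdot}(Q^{\cdot})\to H^{\cdot}(\bar{X})\oplus H^{\cdot}(\rC{\cL})$ is not injective (indeed $H^{\cdot}(\rC{\cL})=0$, while $q_1^{*}$ has nontrivial kernel coming from the connecting homomorphism of the Mayer--Vietoris sequence). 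So there is no way to pull a product back from the target. The paper handles this by a substantial preliminary step you have skipped: it shows (Proposition~\ref{inclusionDGA} and Theorem~\ref{thm:gDGAhom}) that one can choose cellular diagonal approximations so that $g^{\#}$ and $i_0^{\tilde{\#}}$ are DGA homomorphisms \emph{on the cochain level}, making $Q^{\cdot}$ a pullback in the category of DGAs and hence a DGA in its own right. Without this, the phrase ``cohomology ring of $Q^{\cdot}$'' has no meaning, and the subsequent multiplicativity verifications have nothing to compare against.

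A secondary error: there is no ``projection $Q^{\cdot}\to\ker(g^{\#})$ obtained from the first factor''. For $(\varphi,\psi)\in Q^{\cdot}$ one has $g^{\#}\varphi=i_0^{\tilde{\#}}\psi$, which is generally nonzero, so the first coordinate does not land in $\ker(g^{\#})$. The correct comparison map goes the other way, namely the inclusion $\ker(g^{\#})\hookrightarrow Q^{\cdot}$, $\varphi\mapsto(\varphi,0)$; this is a sub-DGA inclusion (once $Q^{\cdot}$ is a DGA) and a quasi-isomorphism by the acyclicity of $\rC{\cL}$ and a 5-Lemma argument, exactly as the paper does.
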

The universal property of the pullback is later used to construct the map $\phi_2 : \CI \to Q^{\cdot}$ which is the middle part of our intersection space cohomology de Rham map.

Recall that in the spatial picture of intersection space cohomology we are working with cellular cochains and accordingly the various cochain complexes of topological spaces here are their cellular cochain complexes. Note that $g$ and $i_0$ are cellular maps. Therefore they induce cochain maps, and $Q^\bullet$ is a cochain complex by construction. On the other side it is in general false that cellular maps induce multiplicative maps on cochain level and we have to clarify how the product on $Q^\bullet$ and hence $H^\bullet(Q^\bullet)$ arises.
The cup product of cellular cohomology is induced on cochain level by an Eilenberg-Zilber type map and a cellular approximation to the diagonal, called cellular diagonal approximation in the following. In Section \ref{sect:ProductQ} we demonstrate that $i_0^{\tilde{\#}}$ and $g^{\#}$ are DGA homomorphisms for the right choice of products on cochain level. This upgrades the construction above from the category of cochain complexes to the category of DGAs and $Q^{\cdot}$ is naturally equipped with an appropriate product.

After this we turn to the proof of Theorem \ref{isopullback}. An explicit quasi-isomorphism is given by the map $\phi_1$ in the diagram
\begin{equation}\label{pullbackphi1} 
\begin{tikzcd}
\rC{\I} \drar{\phi_1} \arrow{ddr}[swap]{i_{\bar{X}}^{\tilde{\#}}} \arrow{rrd}{i_{\cL}^{\#}}                        &&\\
& Q^{\cdot} \arrow{r} {q_2}\arrow{d}{q_1}& \rC{\cL} \arrow{d}{i_0^{\tilde{\#}}} \\
&\C{\bar{X}} \arrow{r}{g^{\#}}& \C{\tL}
\end{tikzcd}
\end{equation}
with $i_{\cL}: \cL \to \I$ the composition
 \[ i_{\cL}: \cL \hookrightarrow \bar{X} \sqcup \cL \twoheadrightarrow \frac{\bar{X} \sqcup \cL}{\sim} = \I \]
and $i_{\bar{X}}^{\tilde{\#}}: \rC{\I} \to \C{\bar{X}}$ the composition 
\[ \rC{\I} \hookrightarrow \C{\I} \xrightarrow{i_{\bar{X}}^{\#}} \C{\bar{X}} \]
where $i_{\bar{X}}:\bar{X} \to \I$ is the inclusion of $\bar{X}$ in $cone(g)=\I$. Note that by choosing the natural cell structures on the cones $\I$ and $\cL$ and their respective tips as base point, one can quite straight forwardly establish that the diagram commutes. Therefore, $\phi_1$ is uniquely determined by the universal property of the pullback. It is also not hard to establish that $\phi_1$ is a bijection, however it is unclear whether one can equip $\rC{\cL}$ and $\rC{\I}$ with cup products such that the map
\[
 i_{\cL}: \cL \hookrightarrow \bar{X} \sqcup \cL \twoheadrightarrow \frac{\bar{X} \sqcup \cL}{\sim} = \I 
\]
induces a DGA morphism. Therefore, we only obtain that $\phi_1$ is an isomorphism of cochain complexes and not of DGAs. Apriori it is not clear, whether $\phi_1^*$ respects the multiplication of the cohomology rings. We establish this property in section \ref{sect:phi1multqis}.

\subsection{The Product on the Pullback}\label{sect:ProductQ}
As explained above, we need to establish that we can choose products on cochain level such that $i_0^{\tilde{\#}}$ and $g^{\#}$ are DGA homomorphisms. With this choice of products, the pullback is a pullback in the category of DGAs and $Q^\bullet$ is a DGA by construction. Note that every choice of graded product on $\C{\cL}$ restricts to a product on $\rC{\cL}$ and therefore the inclusion $\rC{\cL} \hookrightarrow \C{\cL}$ is a DGA homomorphism. It is left to prove that $i_0^{\#}$ and $g^{\#}$ can be made into multiplicative maps, which is the content of Proposition \ref{inclusionDGA} and Theorem \ref{thm:gDGAhom}, respectively.

The map $i_0$ is a CW subcomplex inclusion and the multiplicativity of its induced map is covered by the following proposition.
\begin{prop}\label{inclusionDGA}
For a CW subcomplex inclusion $i: Y \xhookrightarrow{} Z$ and a given cellular diagonal approximation $\tilde{\Delta}_{Y}$ on $Y$,  we can choose a cellular diagonal approximation $\tilde{\Delta}_{Z}$ on $Z$ such that $i^{\#} : \C{Z} \to \C{Y}$ is a DGA homomorphism with respect to the cup products induced by those diagonal approximations.
\end{prop}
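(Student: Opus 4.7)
The strategy is to construct $\tilde{\Delta}_Z$ as a cellular diagonal approximation of $Z$ that restricts on the subcomplex $Y$ to $(i\times i)\circ \tilde{\Delta}_Y$. This strict commutativity of the square
\[
\begin{tikzcd}
Y \rar{\tilde{\Delta}_Y} \dar{i} & Y \times Y \dar{i \times i} \\
Z \rar{\tilde{\Delta}_Z} & Z \times Z
\end{tikzcd}
\]
together with the naturality of the Eilenberg--Zilber cellular cross product, will be enough to deduce that $i^{\#}$ is a DGA homomorphism with respect to the induced cup products.

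To produce such a $\tilde{\Delta}_Z$, I would proceed as follows. The diagonal $\Delta_Z : Z \to Z \times Z$ restricts on $Y$ to $(i\times i)\circ \Delta_Y$, while the given $\tilde{\Delta}_Y$ is homotopic to $\Delta_Y$ via some homotopy $H_Y : Y \times I \to Y \times Y$. Composing with $i \times i$ gives a homotopy from $\Delta_Z|_Y$ to $(i \times i) \circ \tilde{\Delta}_Y$ inside $Z \times Z$. Since $(Z, Y)$ is a CW pair and therefore has the homotopy extension property, I extend this partial homotopy to a homotopy $H : Z \times I \to Z \times Z$ starting at $\Delta_Z$; let $f := H(\cdot, 1)$ denote its endpoint. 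Then $f \simeq \Delta_Z$, and $f|_Y = (i\times i)\circ \tilde{\Delta}_Y$ is already cellular on $Y$. The relative cellular approximation theorem then yields a cellular map $\tilde{\Delta}_Z : Z \to Z \times Z$ homotopic to $f$ via a homotopy which is constant on $Y$. By construction $\tilde{\Delta}_Z|_Y = (i \times i)\circ \tilde{\Delta}_Y$ and $\tilde{\Delta}_Z \simeq \Delta_Z$, so $\tilde{\Delta}_Z$ is a cellular diagonal approximation making the square above commute on the nose.

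With this commutative square, the DGA property of $i^{\#}$ follows almost formally. The cellular cup product is given by $\alpha \cup \beta = \tilde{\Delta}^{\#}(\alpha \times \beta)$, where $\times : \C{X} \otimes \C{X} \to \C{X \times X}$ is the cellular cross product coming from an Eilenberg--Zilber map, which is natural with respect to cellular maps. Thus $(i \times i)^{\#}(\alpha \times \beta) = i^{\#}\alpha \times i^{\#}\beta$, and combining this with $\tilde{\Delta}_Z \circ i = (i \times i)\circ \tilde{\Delta}_Y$ yields
\[
i^{\#}(\alpha \cup_Z \beta) = i^{\#}\tilde{\Delta}_Z^{\#}(\alpha \times \beta) = \tilde{\Delta}_Y^{\#}(i\times i)^{\#}(\alpha \times \beta) = i^{\#}\alpha \cup_Y i^{\#}\beta,
\]
as required. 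The main obstacle of the whole argument is making sure that the extension of the diagonal approximation can be chosen simultaneously cellular on all of $Z$ and strictly compatible with the one already fixed on $Y$; this is exactly what the relative form of the cellular approximation theorem, combined with the homotopy extension property for the CW pair $(Z,Y)$, is designed to provide.
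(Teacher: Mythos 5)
Your proposal is correct and follows essentially the same route as the paper: extend $\tilde{\Delta}_Y$ to a cellular diagonal approximation $\tilde{\Delta}_Z$ with $\tilde{\Delta}_Z\circ i=(i\times i)\circ\tilde{\Delta}_Y$ via the relative cellular approximation theorem, then combine this with the naturality of the Eilenberg--Zilber cross product under $i$ to get $i^{\#}\circ\cup_Z=\cup_Y\circ(i^{\#}\otimes i^{\#})$. The only difference is that you spell out the homotopy-extension step that the paper subsumes into a single citation of the relative cellular approximation theorem.
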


\begin{proof}
Choosing the canonical cell structure on the CW complexes $Y\times Y$ and $Z\times Z$ ensures that for given cells $a$ and $b$ their product $a\times b$ is a cell again. Thus the Eilenberg-Zilber type maps 
\[
 \theta_Y: \C{Y}\otimes \C{Y}  \to \C{Y \times Y}\\
\]
and
\[
 \theta_Z: \C{Z}\otimes \C{Z}  \to \C{Z \times Z}\\
\]
are given by mapping $a^\dagger \otimes b^\dagger$ bijectively to $(a\times b)^\dagger$. The daggered objects 
are the respective cochains in the basis dual to the basis of chains given by the cells. We calculate
\begin{align*}
(i \times i)^{\#} \circ \theta_Z (a^\dagger \otimes b^\dagger)&=(i \times i)^{\#} (a\times b)^\dagger = (i(a)\times i(b))^\dagger  \\ &= \theta_Y (i(a)^\dagger \otimes i(b)^\dagger) =\theta_Y \circ (i^{\#} \otimes i^{\#})   (a^\dagger \otimes b^\dagger)
\end{align*}
so
\[
(i \times i)^{\#} \circ \theta_Z  =\theta_Y \circ (i^{\#} \otimes i^{\#}).
\]
The relative version of the Cellular Approximation Theorem (cf. \cite[p.~76]{May}) 
assures that we can extend a cellular diagonal approximation $\tilde{\Delta}_Y$ of $Y$ to a cellular diagonal approximation $\tilde{\Delta}_Z$ of $Z$, i.e.
\[
  (i\times i) \circ \tilde{\Delta}_Y = \tilde{\Delta}_Z \circ i.
\]
In conclusion
\begin{align*}
i^{\#} \circ \cup_Z 	&= i^{\#} \circ \tilde{\Delta}_Z^{\#} \circ \theta_Z\\ 
					&= \tilde{\Delta}_Y^{\#} \circ (i\times i)^{\#} \circ \theta_Z \\
					&= \tilde{\Delta}_Y^{\#} \circ \theta_Y \circ (i^{\#} \otimes i^{\#})   \\
					&= \cup_Y \circ (i^{\#} \otimes i^{\#})  ~.    
\end{align*}
So, $i$ induces a DGA homomorphism on cochain level if we define the cup product via the specific diagonal approximations  $\tilde{\Delta}_Y$ and $\tilde{\Delta}_Z$.
\end{proof}
Next we establish that there are DGA-structures on $\C{\bar{X}}$ and $ \C{\tL}$ such that $g$ induces a DGA homomorphism, too.
\begin{thm}\label{thm:gDGAhom}
  Given a cellular diagonal approximation $\tilde{\Delta}_{t_{<k} L}$ on $\tL,$ we can choose a cellular diagonal approximation on $\bar{X}$ such that $g^{\#}: \C{\bar{X}} \to \C{\tL}$ becomes a DGA homomorphism with respect to the DGA-structures induced by the corresponding cup products. In particular, $g^*: H^\bullet (\bar{X}) \to H^\bullet (\tL)$ is a ring homomorphism.
\end{thm}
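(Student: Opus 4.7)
The plan is to reduce the theorem to Proposition \ref{inclusionDGA} by replacing the cellular map $g$ with a CW subcomplex inclusion using the mapping cylinder construction. Let $Z := M(g) = (\tL \times [0,1]) \sqcup \bar{X} / \sim$, with $(x,1) \sim g(x)$, equipped with its canonical CW structure. Then $\iota_{\tL}: \tL \hookrightarrow Z$ (the inclusion of $\tL \times \{0\}$) and $s: \bar{X} \hookrightarrow Z$ (the inclusion at the top end) are CW subcomplex inclusions of disjoint subcomplexes, the cellular retraction $r: Z \to \bar{X}$ is a homotopy equivalence satisfying $r \circ s = \id_{\bar{X}}$, and $g$ factors as $g = r \circ \iota_{\tL}$.

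First I would apply Proposition \ref{inclusionDGA} to $\iota_{\tL}$, extending the given $\tilde{\Delta}_{\tL}$ to a cellular diagonal approximation $\tilde{\Delta}_Z$ on $Z$ with $\tilde{\Delta}_Z \circ \iota_{\tL} = (\iota_{\tL} \times \iota_{\tL}) \circ \tilde{\Delta}_{\tL}$, so that $\iota_{\tL}^{\#}: \C{Z} \to \C{\tL}$ becomes a DGA homomorphism. Next, I would transport $\tilde{\Delta}_Z$ to $\bar{X}$ along the retraction by setting $\tilde{\Delta}_{\bar{X}} := (r \times r) \circ \tilde{\Delta}_Z \circ s$. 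This composition is cellular, and $r \circ s = \id_{\bar{X}}$ together with $\tilde{\Delta}_Z \simeq \Delta_Z$ gives $\tilde{\Delta}_{\bar{X}} \simeq (r \times r) \circ (s \times s) \circ \Delta_{\bar{X}} = \Delta_{\bar{X}}$, so it is a valid cellular diagonal approximation on $\bar{X}$.

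The hard part will be verifying that $g^{\#} = \iota_{\tL}^{\#} \circ r^{\#}$ is a DGA homomorphism on the nose, since the ``In particular'' statement about $g^*$ being a ring homomorphism then follows immediately by passage to cohomology. The computation in the proof of Proposition \ref{inclusionDGA} that establishes $(i \times i)^{\#} \circ \theta_Z = \theta_Y \circ (i^{\#} \otimes i^{\#})$ for a subcomplex inclusion $i$ in fact goes through for any cellular map (the naturality of the Eilenberg--Zilber identification $\theta$ on product cells is not special to inclusions), so multiplicativity of $g^{\#}$ is equivalent to the cellular chain-level identity $\tilde{\Delta}_{\bar{X}} \circ g = (g \times g) \circ \tilde{\Delta}_{\tL}$. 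Using $g = r \circ \iota_{\tL}$ and the definition of $\tilde{\Delta}_{\bar{X}}$, this in turn reduces to comparing the cellular chain maps induced by $s \circ g$ and $\iota_{\tL}$ as maps $\tL \to Z$. These two maps are homotopic via the cylinder direction but are not equal, so the main obstacle is closing the chain-level discrepancy. I expect this to be handled by performing the cellular approximation for $\tilde{\Delta}_Z$ relative to the disjoint subcomplex $\tL \sqcup \bar{X} \subset Z$, using $\tilde{\Delta}_{\tL}$ on $\tL$ together with a diagonal approximation on $\bar{X}$ chosen compatibly with the transport through $r$, so that the extension $\tilde{\Delta}_Z$ respects both subcomplex inclusions simultaneously and the desired chain-level identity on $\tL$ holds on the nose.
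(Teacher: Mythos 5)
Your reduction to Proposition \ref{inclusionDGA} via the mapping cylinder does not close, and the discrepancy you flag at the end is not a technicality but the crux of the matter. Writing $Z=M(g)$, $s\colon\bar X\hookrightarrow Z$, $r\colon Z\to\bar X$, the identity you need at the level of cellular chains is
\[
(r\times r)_{\#}\,\tilde\Delta_{Z\#}\,s_{\#}\,g_{\#} \;=\; (r\times r)_{\#}\,\tilde\Delta_{Z\#}\,(\iota_{\tL})_{\#},
\]
and since $s_{\#}g_{\#}=s_{\#}r_{\#}(\iota_{\tL})_{\#}$, the two sides differ by $(r\times r)_{\#}\tilde\Delta_{Z\#}(\partial P+P\partial)(\iota_{\tL})_{\#}$, where $P$ is the prism chain homotopy between $s_{\#}r_{\#}$ and the identity supplied by the cylinder cells. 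This correction term is chain homotopic to zero but not zero, so this route only yields multiplicativity of $g^{\#}$ up to cochain homotopy --- which is what holds for \emph{any} cellular map and is not enough here: the theorem needs a strict DGA homomorphism so that $\ker(g^{\#})$ is a sub-DGA and $Q^\bullet$ is a pullback of DGAs. Your proposed fix --- approximating $\Delta_Z$ relative to the disjoint subcomplex $\tL\sqcup\bar X$ with a diagonal approximation on $\bar X$ ``chosen compatibly with the transport through $r$'' --- is circular: prescribing $\tilde\Delta_Z|_{\bar X}=(s\times s)\circ D$ for a diagonal approximation $D$ on $\bar X$ forces $\tilde\Delta_{\bar X}=D$ (because $r\circ s=\id$), and the condition you then need on $D$ is exactly $D\circ g=(g\times g)\circ\tilde\Delta_{\tL}$ at chain level, i.e.\ the original problem. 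For a black-box cellular $g$ this is in general unsolvable; in particular $g$ need not be injective, so one cannot even prescribe $D$ on $g(\tL)$ by that formula.

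The paper's proof succeeds precisely because it does not treat $g$ as an arbitrary cellular map: it uses the factorization $g=i_\partial\circ i_k\circ h\circ i_{<k}$ coming from the spatial homology truncation. The three inclusion factors are handled exactly as in your first step (relative cellular approximation, respectively the homotopy extension and lifting property). The single non-inclusion factor $h\colon L/k\to L^k$ is handled by a dimension argument special to this situation: since $L/k$ is $k$-dimensional and $h'\circ h\simeq\id$ rel the $(k-1)$-skeleton, the resulting cochain homotopy operator is forced to vanish, so $h^{\#}$ is an honest isomorphism of cochain complexes with inverse ${h'}^{\#}$, and the cup product on $L^k$ can be \emph{defined} by conjugation, $\nabla_{L^k}=(h\times h)\circ\tilde\Delta_{L/k}\circ h'$, making $h^{\#}$ strictly multiplicative. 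That vanishing of the homotopy operator is exactly the ingredient your mapping-cylinder argument lacks: in $M(g)$ the analogous homotopy operator $P$ does not vanish, and nothing in your construction kills the correction term it produces.
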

\begin{proof}
  By revisiting the construction of the spatial homology truncation, we see that for $k<3$ the map $g$ is the inclusion of the base point \cite[Section 1.1.5]{intspaces} and thus the statement of this proposition is the same as the statement of Proposition \ref{inclusionDGA}. So let $k \geq 3$. Then $g= i_\partial \circ f$ with $f$ defined as composition of the subcomplex inclusion
\[
i_{<k}:t_{<k} L\to L/k,
\] 
a homotopy equivalence
\[
h:L/k \to L^k~,
\]
and a further subcomplex inclusion
\[
i_k:L^k\to L
\]
(see \cite[Proposition 1.6\& Section 1.1.6]{intspaces}). We demonstrate in the following how one can choose the cellular diagonal approximations such that each of these maps induces a multiplicative map on cochains.

By Proposition \ref{inclusionDGA}, we can choose a cellular diagonal approximation $\tilde{\Delta}_{L/k}$ on $L/k$ such that ${i_{<k}}^\#$ is a DGA homomorphism with respect to the
cup products $\cup_{\tL} := \tilde{\Delta}_{t_{<k} L}^{\#} \circ \theta_{\tL}$ and $\cup_{L/k} := \tilde{\Delta}_{L/k}^{\#} \circ \theta_{L/k},$ with $\theta_{\tL}$ and $\theta_{L/k}$ the Eilenberg-Zilber maps as in Proposition \ref{inclusionDGA} and $\tilde{\Delta}_{\tL}$ the given diagonal approximation on $\tL$.

Next, we consider the map $h: L/k \to L^k.$ 
The space $L/k$ is a $k$ dimensional CW complex with the same $(k-1)$-skeleton as $L^k$. The $k$ cells are glued in such a way that they correspond to a spacification of a base change in the $k$-th chain group of $L^k$. The map $h$ is constructed as the homotopy inverse of the map
\[
h':L^k\to L/k
\]
relative to the $(k-1)$-skeleton.
The map $h'$ is defined such that it spatially realizes the aforementioned base change. In particular, it induces an isomorphism on the $k$-th chain and cochain group and is the identity on all cochain groups of lower (and trivially also all other) cochain groups. The latter can be formulated as the commutativity of the following diagram.
\[
  \begin{tikzcd}
    L^k \ar{r}{h'} & L/k \\ L^{k-1} \ar{r}{\id} \ar[hook]{u} & L^{k-1} \ar[hook]{u}{i_{k-1}}
  \end{tikzcd}
\]
Note, that it is not obvious that the cellular cochain map induced by $h$ is also an isomorphism a priori. In this setting, it is true, though, as we outline in the following.
In formulas, we know that 
\[
  h' \circ h \simeq id ~ \text{rel}~ L^{k-1}. 
\]
Thus, there is a cochain homotopy operator $s:\C{L/k} \to C^{\cdot-1}(L/k)$ such that
\begin{equation}\label{homotopy}
h^\# \circ {h'}^\# = \id + sd + ds \quad \text{and} \quad i_{k-1}^\# \circ s = 0.
\end{equation}
Since $(i_{k-1}^\#)^r: C^r (L/k) \to C^r(L^{k-1})$ is an isomorphism for $r<k$ and $(i_{k-1}^\#)^{r-1} \circ s^r = 0$ for all $r \in \ZZ,$ we get that $s^r = 0$ for $r\leq k.$ Since $L/k$ is a $k$-dimensional CW-complex, $C^r (L/k) = 0$ for $r>k$ and hence, $s^r = 0$ for all $r \in \ZZ.$ That implies that $h^\# \circ {h'}^\# = \id$. By the same argument, ${h'}^\# \circ h^\# = \id$ also holds and therefore $h^\#: C^\bullet (L^k) \to C^\bullet (L/k)$ is an isomorphism with inverse $(h^\#)^{-1} = {h'}^\#.$

Let $\tilde{\Delta}_{L/k}$ be the cellular approximation used above and set
\[
  \nabla_{L^k} :=(h \times h) \circ \tilde{\Delta}_{L/k} \circ h'.
\]
Since $\nabla_{L^k}$ is cellular, the following calculation shows that $\nabla_{L^k}$ is a cellular approximation of the diagonal $\Delta_{L^k}$ of $L^k$,
\[
\nabla_{L^k}=(h \times h) \circ \tilde{\Delta}_{L/k} \circ h' \simeq (h \times h) \circ \Delta_{L/k} \circ h' = \Delta_{L^k} \circ h \circ h' \simeq \Delta_{L^k}.
\]
Let $\cup_{L^k} = \nabla_{L^k}^\# \circ \theta_{L^k}: C^r (L^k) \otimes C^s(L^k) \to C^{r+s} (L^k)$ be the resulting cup product on $L^k,$ with $\theta_{L^k}$ the Eilenberg-Zilber map as before. By this definition, $h$ becomes a DGA homomorphism since $h^\# \circ {h'}^\# = \id$,
\[ h^\# \circ \cup_{L^k} = h^\# \circ {h'}^\# \circ \tilde{\Delta}_{L/k}^{\#} \circ (h \times h)^\# \circ \theta_{L^k} = \cup_{L/k} \circ (h^\# \otimes h^\#). \]
Finally apply the  homotopy extension and lifting property (cf. \cite[p.~75]{May}) to $ (i_\partial\circ i_k \times i_\partial\circ i_k) \circ \nabla_{L^k}$ and $\Delta_{\bar{X}}$ to obtain a map $\nabla'_{\bar{X}}$ such that $ \nabla'_{\bar{X}}|_{L^k}=\nabla_{L^k}$ and $\nabla'_{\bar{X}} \simeq \Delta_{\bar{X}}.$
Note that $\nabla_{L^k}$ is a composition of cellular maps. So, the cellular approximation theorem relative to $L^k$ yields a cellular map $\nabla_{\bar{X}}$ such that $\nabla_{\bar{X}}|_{L^k}=\nabla_{L^k}$ and $\nabla_{\bar{X}} \simeq \Delta_{\bar{X}}.$
The first equation can be re-written as
\[
\nabla_{\bar{X}}\circ (i_\partial \circ i_k)= \left\{ (i_\partial \circ i_k)\times(i_\partial \circ i_k) \right\} \circ \nabla_{L^k}.
\]
Setting $\cup_{\bar{X}} := \nabla_{\bar{X}}^\# \circ \theta_{\bar{X}},$ where once again $\theta_{\bar{X}}$ is the Eilenberg-Zilber map on $\bar{X}$, $(i_\partial \circ i_k)^\#$ is an DGA homomorphism with respect to the cup products $\cup_{\bar{X}}$ and $\cup_{L^k}$.
\[ (i_\partial \circ i_k)^\# \circ \cup_{\bar{X}} = \cup_{L^k} \circ  \left( (i_\partial \circ i_k)^\# \otimes (i_\partial \circ i_k)^\# \right) \]
We combine the previous results in the following equation.
\[
g^\# \circ \cup_{\bar{X}} = {i_{<k}}^\# \circ h^\# \circ (i_\partial \circ i_k)^\# \circ \cup_{\bar{X}} = \cup_{\tL} \circ (g^\# \otimes g^\#) \]
In summary, $g^\#$ is a DGA homomorphism from $(\C{\bar{X}}, \cup_{\bar{X}})$ to $(\C{\tL}, \cup_{\tL})$ and since those cup products are induced by cellular diagonal approximations, they induce the regular ring structure on cohomology and we arrive at the statement of the proposition.
\end{proof}
\begin{rmk}
Note that after the initial cellular approximation to the diagonal map of $\tL$ we only alter the products on the codomains and always work relative to the subcomplex $\tL$. Thus it is possible to choose products such that $i_0^{\#}$ and $g^{\#}$  simultaneously become DGA homomorphisms. Therefore, $Q^\bullet$ is indeed a pullback in the categories of DGAs.
\end{rmk}
\subsection{$\phi_1^*$ is a ring isomorphism}\label{sect:phi1multqis}
In this section we prove Theorem \ref{isopullback}. We establish the isomorphism from the reduced cohomology ring $\tilde{H}^\bullet(\I)$ to $H^\bullet(Q^\bullet)$ as the following composition of ring isomorphisms
\[
\begin{tikzcd}
  \widetilde{H}^\bullet (\I) \ar{r}{\cong}[swap]{p^*} & H^\bullet (M(g),\tL) \ar{r}{\cong}[swap]{(r|^*)^{-1}}&  H^\bullet (ker(g^{\#})) \ar{r}{\cong}[swap]{\incl^*} & H^\bullet (Q^\bullet)~.
\end{tikzcd}
 \]
Here $p:M(g)\rightarrow \text{cone}(g)=\I$ is the map collapsing $\tL$ embedded as the top of the mapping cylinder $M(g)$ to the cone point $c \in \I$, 
$r:M(g) \to \bar{X}$ is the deformation retraction of the mapping cone onto its base $\bar{X}$
and the last map is the sub-DGA inclusion of $ker(g^{\#})\oplus 0$ in $Q^\bullet$ where both are thought of as sub-DGAs of $\C{\bar{X}}\oplus \rC{\cL}$. Note, that, in the above diagram, $(r|^*)^{-1}$ is an abbreviation for $((r^\#|_{\ker(g^\#)})^*)^{-1}$.
Later we demonstrate that $\phi_1: \rC{\I} \to \Q$ indeed induces the composition $incl^*\circ (r|^*)^{-1} \circ p^*$ on cohomology. We proceed to establish that all the factors in the composition above are ring isomorphisms.

\begin{prop}
The map
\begin{equation}
 p^*: H^\bullet ( \I, \left\{ c \right\}) \xrightarrow{\cong} H^\bullet (M(g), \tL)
  \label{eq:pIsoMappingCylinder}
\end{equation}
is a ring isomorphism.
\end{prop}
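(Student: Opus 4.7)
The plan is to identify the map $p \colon (M(g), \tL) \to (\I, \{c\})$ with the quotient map of a good CW pair. Since $\tL$ sits in the mapping cylinder $M(g)$ as a CW subcomplex, the pair $(M(g), \tL)$ is a CW pair, and $p$ factors through a homeomorphism $M(g)/\tL \xrightarrow{\cong} \I$ sending the class of $\tL$ to the cone point $c.$ On cellular cochains this lets me identify $C^\bullet(\I, \{c\})$ with $C^\bullet(M(g), \tL)$: the cells of $\I$ outside $\{c\}$ are in bijection, via $p,$ with the cells of $M(g)$ outside $\tL,$ and relative cochains on either side are exactly the dual cochains vanishing on the respective subcomplex. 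Hence $p^\#$ restricts to an isomorphism of cochain complexes, and therefore induces an isomorphism on cohomology.

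For the multiplicativity I would argue in the spirit of Proposition \ref{inclusionDGA}. By the relative cellular approximation theorem one can choose a cellular diagonal approximation $\tilde{\Delta}_{M(g)}$ on $M(g)$ whose restriction to $\tL$ is a diagonal approximation on $\tL$ and hence lands in $\tL \times \tL.$ Consequently $(p \times p) \circ \tilde{\Delta}_{M(g)}$ descends across the quotient $p$ to a well-defined cellular map $\tilde{\Delta}_{\I} \colon \I \to \I \times \I,$ which is homotopic to the diagonal of $\I$ and therefore a valid cellular diagonal approximation. The very same computation as in Proposition \ref{inclusionDGA} then shows that $p^\# \colon \C{\I} \to \C{M(g)}$ becomes a DGA homomorphism for the resulting cup products. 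Since these cup products restrict to the relative cup products on $C^\bullet(\I, \{c\})$ and $C^\bullet(M(g), \tL),$ and the cochain-level isomorphism from the first paragraph is itself the restriction of $p^\#,$ we obtain a DGA isomorphism on cochain level, hence a ring isomorphism on cohomology.

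The main point requiring care is that the cup products chosen on the two sides via $\tilde{\Delta}_{M(g)}$ and $\tilde{\Delta}_{\I}$ really do induce the ring structures on $\widetilde{H}^\bullet(\I) = H^\bullet(\I, \{c\})$ and on $H^\bullet(M(g), \tL)$ that one wants to compare. This is handled by the standard fact that any two cellular diagonal approximations are cellularly homotopic and thus induce the same cup product on cohomology, so the freedom exploited in the relative cellular approximation step affects only the cochain-level products, not the cohomology rings. With this in hand the proof is essentially a combination of the good-pair quotient isomorphism with a cellular-approximation argument patterned on Proposition \ref{inclusionDGA}.
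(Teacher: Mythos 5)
Your proposal is correct, but it takes a genuinely different and noticeably heavier route than the paper, above all for the multiplicativity. The paper's proof is two lines: the cited lemma of Massey gives that $p^*$ is an isomorphism of relative cohomology groups, and since $p$ is a continuous map of pairs, $p^*$ preserves cup products by the naturality of the cohomological cup product --- multiplicativity is free on cohomology, with nothing to arrange at cochain level. You instead construct a cochain-level DGA isomorphism: you identify $(\I,\{c\})$ with the quotient of the good CW pair $(M(g),\tL)$ to get an isomorphism of relative cellular cochain complexes (essentially the same content as the Massey citation, and consistent with the explicit models $\C{M(g),\tL}=\C{\bar X}\oplus C^{\bullet-1}(\tL)=\rC{\I}$ used later in the paper), and then descend a relative cellular diagonal approximation through the quotient so that $p^{\#}$ is multiplicative on the nose. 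The descent does work: a diagonal approximation of $M(g)$ chosen relative to one on $\tL$ sends $\tL$ into $\tL\times\tL$, so its composition with $p\times p$ is constant on the fibre of $p$ over $c$ and factors through $\I$, and the homotopy to the diagonal descends for the same reason. If you keep this route, one step should be made explicit: the identity $(p\times p)^{\#}\circ\theta_{\I}=\theta_{M(g)}\circ\left(p^{\#}\otimes p^{\#}\right)$ for the Eilenberg--Zilber maps, which your computation patterned on Proposition \ref{inclusionDGA} uses implicitly, holds here because $p$ carries each cell of $M(g)$ either homeomorphically onto a cell of $\I$ or into the basepoint, but it is not automatic for arbitrary cellular maps. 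Your argument buys a stronger, cochain-level statement; since only the induced map on cohomology is needed, the paper's naturality argument is the more economical choice.
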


\begin{proof}
  By \cite[Lemma 3.1, Chapter X]{MasseySingHomology}, $p^*$ induces an isomorphism of relative cohomology groups in all degrees. Further $p$ is a continuous map and thus induces a multiplicative map on cohomology by the naturality of the cohomological cup product. Hence $p^*$ is a ring isomorphism.
\end{proof}

Recall that under the appropriate choice of products $g^{\#}: C^\bullet (\bar{X}) \to C^\bullet (t_{<k} L)$ is a DGA homomorphism by Theorem \ref{thm:gDGAhom}. Accordingly, $ker(g^{\#})$ is a sub-DGA of $\C{\bar{X}}$. Further note that the retraction $r$ fits into the following commutative diagram of (cellular) maps 
\begin{equation}
  \begin{tikzcd}
    \ & M(g) \ar{dr}{r} & \ \\
    \tL \ar{ur}{i_{\tL}} \ar{rr}{g} & \ & \bar{X},
  \end{tikzcd}
  \label{eq:mappingcylinderg}
\end{equation}
where $i_{\tL}: \tL \hookrightarrow M(g)$ is the inclusion at the top of the cylinder. For $\varphi \in ker(g^{\#})$, the commutativity of Diagram (\ref{eq:mappingcylinderg}) implies that
\[ i_{\tL}^\# \circ r^\# (\varphi) = g^\# \varphi = 0. \]
Accordingly, $r^\#$ restricts to a cochain morphism
\[ r^{\#}|_{ker(g^{\#})}: ker(g^{\#}) \to C^\bullet (M(g), \tL)~. \]
As a deformation retraction, $r$ induces an isomorphism $r^*: H^\bullet (\bar{X}) \xrightarrow{\cong} H^\bullet (M(g))$ of cohomology rings but a priori it is not clear that the restriction of $r^\#$ induces a ring isomorphism 
\begin{equation}
H^\bullet (ker(g^{\#})) \xrightarrow{\cong} H^\bullet (M(g), \tL)~. 
  \label{eq:rIsoMappingCylinder}
\end{equation}
We first establish that $r^{\#}|_{ker(g^{\#})}$ is a quasi-isomorphismus in Proposition \ref{prop:rqis} and then in Proposition \ref{prop:rmult} that the induced map on cohomology is multiplicative. Together, this proves that we indeed have a ring isomorphism on cohomology.

\begin{prop}\label{prop:rqis}
The map
\[ 
r^{\#}|_{ker(g^{\#})}: ker(g^{\#}) \to C^\bullet (M(g), \tL)
\]
is a quasi-isomorphism.
\end{prop}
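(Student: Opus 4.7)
The plan is to apply the five lemma to a morphism of short exact sequences of cochain complexes. Since $\tL$ sits as a CW subcomplex at the top of the mapping cylinder $M(g)$ (realised by $i_{\tL}$), the standard short exact sequence of cellular cochains
\[
0 \to \C{M(g),\tL} \to \C{M(g)} \xrightarrow{i_{\tL}^{\#}} \C{\tL} \to 0
\]
is available. To build a matching sequence for $\ker(g^\#)$, I would first check that $g^\#$ is surjective on cellular cochains. This follows from the explicit factorisation $g = i_\partial \circ i_k \circ h \circ i_{<k}$ recalled in the proof of Theorem \ref{thm:gDGAhom} (the case $k<3$ reduces to a basepoint inclusion and is handled separately): each of $i_{<k}$, $i_k$ and $i_\partial$ is a CW subcomplex inclusion, hence induces a surjection on cellular cochains, and $h^{\#}$ is an isomorphism, as established inside that same proof. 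Consequently
\[
0 \to \ker(g^\#) \to \C{\bar X} \xrightarrow{g^\#} \C{\tL} \to 0
\]
is exact as well.

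Because Diagram \eqref{eq:mappingcylinderg} commutes on the nose, i.e.\ $g = r \circ i_{\tL}$, the restriction of $r^\#$ fits into a morphism of short exact sequences
\[
\begin{tikzcd}
0 \rar & \ker(g^\#) \rar \dar{r^{\#}|_{\ker(g^\#)}} & \C{\bar X} \rar{g^\#} \dar{r^\#} & \C{\tL} \rar \dar{\id} & 0 \\
0 \rar & \C{M(g),\tL} \rar & \C{M(g)} \rar{i_{\tL}^{\#}} & \C{\tL} \rar & 0.
\end{tikzcd}
\]
Passing to the associated long exact sequences in cohomology, the middle vertical map is an isomorphism because $r$ is a deformation retraction, and the rightmost vertical map is the identity. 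The five lemma then forces $r^\#|_{\ker(g^\#)}$ to induce an isomorphism in every degree, which is exactly the claim.

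The only conceptually nontrivial ingredient is the surjectivity of $g^\#$ on cellular cochains; everything else is a formal diagram chase. This step is the expected main obstacle and rests on the explicit cellular model of the spatial homology truncation described in Section \ref{sect:ProductQ}, together with the isomorphism property of $h^\#$ proved there.
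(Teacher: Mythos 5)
Your proof is correct and follows essentially the same route as the paper: the five lemma applied to the morphism from the short exact sequence $0 \to \ker(g^\#) \to \C{\bar X} \to \C{\tL} \to 0$ (exact by the surjectivity of $g^\#$, deduced from the factorisation of $g$ through subcomplex inclusions and the isomorphism $h^\#$) to the relative cochain sequence of $(M(g),\tL)$. The paper's only cosmetic difference is that it reduces the surjectivity check to degree $k$ using that $g$ restricted to the $(k-1)$-skeleton is a subcomplex inclusion, whereas you argue surjectivity of the composite in all degrees at once; both are fine.
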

\begin{proof} 
We apply the 5-Lemma to the pair of long exact sequences on cohomology induced by the following diagram.
    \[
  \begin{tikzcd}
    0 \ar{r} & C^\bullet (M(g), \tL) \ar{r}  		& C^\bullet (M(g)) \ar{r}{i^\#} 		  & C^\bullet (\tL) \ar[equal]{d} \ar{r} & 0 \\
    0 \ar{r} & ker(g^{\#}) \ar{r} \ar{u}{r^{\#}|_{ker(g^{\#})}}	& C^\bullet (\bar{X}) \ar{r}{g^\#} \ar{u}{r^\#} & C^\bullet (\tL) \ar{r} & 0
  \end{tikzcd}
\]
The top row is the usual short exact sequence of relative cochains and the bottom row is exact since $g^{\#}$ is surjective as we establish below. The diagram is obviously commutative and since the last two vertical maps are either an isomorphism or quasi-isomorphism, the map induced by $r^{\#}|_{ker(g^{\#})}$ is an isomorphism, too.

Let us now establish that $g^{\#}$ is surjective. Since the restriction $g|: (\tL)^{k-1} \to \bar{X}$ to the $(k-1)$-skeleton is a subcomplex inclusion and $\tL$ has no cells of dimension greater than $k$, we only have to prove, that $g^{\#,k}: C^k (\bar{X}) \to C^k (\tL)$ is surjective. The map factors as follows:
  \[ C^k (\bar{X}) \xrightarrow{i_{\partial}^{\#,k}} C^k (L) \xrightarrow{i_k^{\#,k}} C^k (L^{k}) \xrightarrow{h^{\#,k}} C^k (L/k) \xrightarrow{i_{<k}^{\#,k}} C^k (\tL). \]
 Recall from the proof of Theorem \ref{thm:gDGAhom} that $h^{\#,k}$ is an isomorphism and all the other maps are induced by CW-subcomplex inclusions and henceforth induce surjective cochain maps. Hence, $g^\#$ is surjective.
\end{proof}

Now we are left with the proof that $r^{\#}|_{ker(g^{\#})}$ induces a multiplicative map on cohomology.

\begin{prop}\label{prop:rmult}
The map
\[ 
r^{\#}|_{ker(g^{\#})}: ker(g^{\#}) \to C^\bullet (M(g), \tL)
\]
induces a multiplicative map on cohomology.
\end{prop}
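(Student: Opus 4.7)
The plan is to choose cellular diagonal approximations on all three spaces so that the obstruction to $r^{\#}$ being a DGA-map on cochains lies in the relative subcomplex $C^\bullet(M(g),\tL)$. Once this is arranged, any pair of cocycles in $\ker(g^{\#})$ will produce cohomologous cocycles in $C^\bullet(M(g),\tL)$ under the two possible orders ``cup then retract'' and ``retract then cup'', yielding multiplicativity on $H^\bullet$.

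First I would apply Theorem \ref{thm:gDGAhom} to fix cellular diagonal approximations $\tilde{\Delta}_{\tL}$ and $\tilde{\Delta}_{\bar{X}}$ making $g^{\#}$ a DGA homomorphism; at the space level this amounts to the identity $\tilde{\Delta}_{\bar{X}}\circ g = (g\times g)\circ \tilde{\Delta}_{\tL}$ of cellular maps $\tL\to \bar{X}\times \bar{X}$. Since $\bar{X}$ and the top copy of $\tL$ are disjoint subcomplexes of $M(g)$, the relative cellular approximation theorem allows extending these to a cellular diagonal approximation $\tilde{\Delta}_{M(g)}$ of $M(g)$ restricting to $\tilde{\Delta}_{\bar{X}}$ on $\bar{X}$ and to $\tilde{\Delta}_{\tL}$ on $\tL$; this automatically turns $C^\bullet(M(g),\tL)$ into a two-sided ideal of $C^\bullet(M(g))$ under the induced cup product.

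Next I would compare the two cellular maps $f_1 := \tilde{\Delta}_{\bar{X}}\circ r$ and $f_2 := (r\times r)\circ \tilde{\Delta}_{M(g)}$ from $M(g)$ to $\bar{X}\times \bar{X}$. Both are cellular approximations of $\Delta_{\bar{X}}\circ r$; on the subcomplex $\bar{X}$ they agree because $r|_{\bar{X}}=\id$ and $\tilde{\Delta}_{\bar{X}}$ lands in $\bar{X}\times \bar{X}$, while on the top copy of $\tL$ they agree precisely by the Theorem \ref{thm:gDGAhom} identity recorded above. The relative cellular approximation theorem then gives a cellular homotopy $f_1\simeq f_2$ rel $\bar{X}\sqcup\tL$. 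The prism operator of such a rel homotopy vanishes on chains of $\bar{X}\sqcup\tL$, so the associated cochain homotopy $S\colon C^\bullet(\bar{X}\times\bar{X})\to C^{\bullet-1}(M(g))$ actually takes values in $C^{\bullet-1}(M(g),\tL)$.

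Combining $S$ with the Eilenberg--Zilber map $\theta_{\bar{X}}$ and using its naturality under $r$ gives, for all $\alpha,\beta\in C^\bullet(\bar{X})$, the cochain-level identity
\[
r^{\#}(\alpha\cup_{\bar{X}}\beta)\;-\;r^{\#}\alpha\cup_{M(g)} r^{\#}\beta \;=\; (dS'+S'd)(\alpha\otimes\beta),
\]
with $S'(\alpha\otimes\beta):=S\theta_{\bar{X}}(\alpha\otimes\beta)\in C^{\bullet-1}(M(g),\tL)$. Specializing to cocycles $\alpha,\beta\in\ker(g^{\#})$ collapses the right-hand side to the exact relative cochain $dS'(\alpha\otimes\beta)$, so $r^{\#}(\alpha\cup\beta)$ and $r^{\#}\alpha\cup r^{\#}\beta$ represent the same class in $H^\bullet(M(g),\tL)$. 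The crux of this argument is the strict cochain-level identity supplied by Theorem \ref{thm:gDGAhom}: without it, $f_1$ and $f_2$ would only agree up to homotopy on $\tL$, the cochain homotopy $S$ would fail to restrict trivially to $\tL$, and the obstruction $S'(\alpha\otimes\beta)$ would not lie in the relative cochain complex---this is the one place where a purely cohomological naturality argument would not suffice.
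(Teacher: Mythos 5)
Your argument founders on the claim that Theorem \ref{thm:gDGAhom} supplies the \emph{space-level} identity $\tilde{\Delta}_{\bar{X}}\circ g = (g\times g)\circ\tilde{\Delta}_{\tL}$. It does not: that theorem only produces the \emph{cochain-level} identity $g^{\#}\circ\cup_{\bar{X}}=\cup_{\tL}\circ(g^{\#}\otimes g^{\#})$, and the distinction is exactly where your proof needs the stronger statement. Recall that $g=i_\partial\circ i_k\circ h\circ i_{<k}$, where $h\colon L/k\to L^k$ is a homotopy equivalence and \emph{not} a subcomplex inclusion. The diagonal approximation $\nabla_{\bar{X}}$ is arranged to restrict to $\nabla_{L^k}=(h\times h)\circ\tilde{\Delta}_{L/k}\circ h'$ along the subcomplex inclusion $i_\partial\circ i_k$, but the compatibility of $\nabla_{L^k}$ with $\tilde{\Delta}_{\tL}$ across $h$ is established only on cochains, via ${h}^{\#}\circ {h'}^{\#}=\id$ (a dimension count; $h'\circ h$ is merely \emph{homotopic} to the identity rel $L^{k-1}$). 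Chasing the definitions, $\nabla_{\bar{X}}\circ g$ and $(g\times g)\circ\tilde{\Delta}_{\tL}$ agree on $L^{k-1}$ but not on the $k$-cells of $\tL$: one would need $\tilde{\Delta}_{L/k}\circ h'\circ h\circ i_{<k}=\tilde{\Delta}_{L/k}\circ i_{<k}$ on the nose. Hence your maps $f_1$ and $f_2$ do not agree on the top copy of $\tL$, there is no homotopy rel $\tL$, and $S'(\alpha\otimes\beta)$ need not lie in $C^{\bullet-1}(M(g),\tL)$. Since you yourself single out this strict identity as the one step that cannot be softened to a homotopy, the proof breaks down precisely there.

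The workable version of your strategy is to carry it out relative to the $(k-1)$-skeleton $L^{k-1}$ only, where $g$ does restrict to a subcomplex inclusion. One then obtains a cochain homotopy $s$ whose restriction to $\tL$ is not zero but is controlled, $i_{k-1}^{\#}\circ s = s_{L^{k-1}}\circ (j_{k-1}\times j_{k-1})^{\#}$, and for closed $\varphi\in\ker(g^{\#})^{l}$, $\psi\in\ker(g^{\#})^{m}$ one must correct $s(\varphi\times\psi)$ to a relative cochain by hand. For $l+m>k+1$ this is automatic because $\tL$ has no cells above dimension $k$; for $l+m\leq k$ it follows because $i_{k-1}'^{\#}\colon C^{p}(\tL)\to C^{p}(L^{k-1})$ is an isomorphism for $p<k$ and $(g\times g)^{\#}(\varphi\times\psi)=0$; and in the remaining case $l+m=k+1$ one needs that $d_{\tL}^{k-1}$ is surjective, i.e.\ that $H^{k}(C^{\bullet}(\tL))=0$ --- a special feature of the Moore approximation. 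This unavoidable case analysis, and in particular the appeal to the vanishing of $H^{k}(\tL)$, is what your argument is missing.
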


\begin{proof}
We know that $r$ maps the $(k-1)$-skeleton $L^{k-1}$ of the top of the cylinder $M(g)$ to $L^{k-1} \subset L \subset \bar{X}$, since the restriction of $g| : L^{k-1} \hookrightarrow \bar{X}$ is a CW-subcomplex inclusion. We fix a cellular approximation $\nabla_{L^{k-1}}$ to the diagonal $\Delta_{L^{k-1}}$ of $L^{k-1}$ and then approximate the the diagonals $\Delta_{\bar{X}}: (\bar{X},L^{k-1}) \to (\bar{X},L^{k-1}) \times (\bar{X},L^{k-1})$ and $\Delta_{M(g)}: (M(g), L^{k-1}) \to (M(g), L^{k-1}) \times (M(g), L^{k-1})$ relative to $\nabla_{L^{k-1}}$ to obtain cellular maps $D_{\bar{X}}\simeq \Delta_{\bar{X}}$ and $D_{M(g)} \simeq \Delta_{M(g)}.$ Since $\Delta_{\bar{X}} \circ r = (r \times r) \circ \Delta_{M(g)}$, also the following maps are homotopic rel $L^{k-1}$.
\[
D_{\bar{X}} \circ r \simeq \Delta_{\bar{X}} \circ r = (r \times r) \circ \Delta_{M(g)} \simeq (r \times r) \circ D_{M(g)} ~ \text{rel}~ L^{k-1}. \]
In detail, there is a cochain homotopy $s: C^{\bullet} (\bar{X} \times \bar{X}) \to C^{\bullet -1} (M(g))$ such that for the inclusions $j_{k-1}: L^{k-1} \hookrightarrow L \hookrightarrow \bar{X}$ and $i_{k-1}: L^{k-1} \hookrightarrow L \hookrightarrow M(g)$, and $s_{L^{k-1}}: C^{\bullet} (L^{k-1} \times L^{k-1}) \to C^{\bullet -1} (L^{k-1})$ the cochain homotopy induced by $\nabla_{L^{k-1}}\simeq \Delta_{L^{k-1}}$ the following holds.
\begin{align*} 
  i_{k-1}^\# \circ s &= s_{L^{k-1}} \circ (j_{k-1} \times j_{k-1})^\# \quad \text{and} \\
  r^\# \circ D_{\bar{X}}^\# &= D_{M(g)}^\# \circ (r \times r)^\# + d_{M(g)} \,\circ \, s + s \,\circ\, d_{\bar{X} \times \bar{X}}.
\end{align*}
Let $\varphi \in ker(g^{\#})^l$ and $ \psi \in ker(g^{\#})^m$ be closed. Then the second of the above relations implies that
\begin{align*}
  r^\# \circ D_{\bar{X}}^\# (\varphi \times \psi) 
  &= D_{M(g)}^\# \circ (r \times r)^\# (\varphi \times \psi) + d_{M(g)} \, s (\varphi \times \psi) + s \, d_{\bar{X} \times \bar{X}} (\varphi \times \psi) \\
  &= D_{M(g)}^\# \circ (r \times r)^\# (\varphi \times \psi) + d_{M(g)} \, s (\varphi \times \psi).
\end{align*}
The last summand in the first line vanishes since $\varphi \times \psi$ is closed as the cross product of two closed forms. In the following, we show that there is a cochain $\alpha \in C^{l+m-1} (M(g), \tL)$ with $ d \, s (\varphi \times \psi) = d \alpha.$ This statement is equivalent to the multiplicativity of the map $r|^*: H^\bullet (ker(g^{\#})) \to H^\bullet (M(g), \tL)$.
We distinguish the cases $l+m \leq k,$ $l+m = k+1$ and $l+m > k+1.$ 

First, let $l+m > k+1$. Then $s \, (\varphi \times \psi)$ is a cochain of degree greater than $k$. Since $\tL$ has no cells of dimension greater than $k$, it follows automatically that the pullback under the inclusion $i_{\tL}: \tL \hookrightarrow M(g)$ of $s (\varphi \times \psi)$ is zero. Therefore, $s (\varphi \times \psi) \in C^{l+m-1} (M(g), \tL)$ and we might choose $\alpha:=  s (\varphi \times \psi)$. 
Now, let $l+m \leq k.$ The inclusion $i_{k-1}': L^{k-1} \hookrightarrow \tL$ of the $(k-1)$-skeleton fits in the following commutative diagram, where, $i_{\tL}: \tL \hookrightarrow M(g)$ is the inclusion of the top of the cylinder as in (\ref{eq:mappingcylinderg}) and $j_{k-1}:L^{k-1}\hookrightarrow \bar{X}$ is the inclusion of $L^{k-1}$ as part of the boundary of $\bar{X}$.
\[
  \begin{tikzcd}
    \ & \bar{X} & \ \\
    L^{k-1} \ar[hook]{rr}{i_{k-1}'} \ar[hook]{rd}[swap]{i_{k-1}} \ar{ru}[hook]{j_{k-1}}& \ & \tL \ar[hook]{dl}{i_{\tL}} \ar{ul}[swap]{g}  \\ 
    \ & M(g) & \   \ 
  \end{tikzcd}
\]
The induced map $i_{k-1}'^\#: C^p (\tL) \xrightarrow{\cong} C^{p} (L^{k-1})$ is an isomorphism on cochains for all $p \neq k$. Therefore, we get the following relation.
\begin{align*}
  i_{\tL}^\# s (\varphi \times \psi) &= (i_{k-1}'^\#)^{-1} i_{k-1}^\# s (\varphi \times \psi) = (i_{k-1}'^\#)^{-1} s_{L^{k-1}} (j_{k-1} \times j_{k-1})^{\#} (\varphi \times \psi) \\
  &= (i_{k-1}'^\#)^{-1} s_{L^{k-1}} (i_{k-1}' \times i_{k-1}')^{\#} (g \times g)^\# (\varphi \times \psi) = 0. 
\end{align*}
The last equality holds, since $(g \times g)^\# (\varphi \times \psi) = (g^\# \varphi) \times g^\# \psi = 0 \times 0 = 0.$ 
As in the previous case, we can thus set $\alpha := s (\varphi \times \psi) \in C^{l+m-1} (M(g), \tL)$.

Last, we consider the case $l+m = k+1$. Recall that $\Hk{\C{\tL}}=0$ and $ker(d^k)=\Ck{\tL}$ so the differential $d_{\tL}^{k-1}: C^{k-1}(\tL) \to C^{k}(\tL)$ is surjective.
 Accordingly,
 $i_{\tL}^\# s (\varphi \times \psi) = d \beta,$ for some $\beta \in C^{k-1} (\tL)$. Since the pullback $i_{\tL}^\#: C^{k-1} (M(g)) \to C^{k-1} (\tL)$ under the CW-subcomplex inclusion $i_{\tL}$ is surjective too, there is a cochain $\gamma \in C^{k-1} (M(g))$ with $\beta = i_{\tL}^\# \gamma$ and we get the following equation.
\[ i_{\tL}^\# s (\varphi \times \psi) = d \beta = d i_{\tL}^\# \gamma = i_{\tL}^\# (d \gamma). \]
This is equivalent to stating that $s(\varphi \times \psi) - d \gamma \in C^{k} (M(g), \tL)$. We can then set $\alpha = s(\varphi \times \psi) - d \gamma,$ since $d \left( s (\varphi \times \psi) - d \gamma \right)  = d \left( s (\varphi \times \psi) \right).$ 
\end{proof}

Proposition \ref{prop:rqis} and \ref{prop:rmult} together establish that $r|^*$ is a ring isomorphism.  

\begin{lem}
  The map
  \begin{align*}
   \incl:  ker(g^{\#}) \hookrightarrow Q^\bullet, \quad
  \varphi  \mapsto (\varphi, 0)
\end{align*}
induces a ring isomorphism on cohomology.
\end{lem}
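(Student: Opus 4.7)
The plan is to fit $\incl$ into a short exact sequence of cochain complexes whose third term has vanishing cohomology, and then observe that $\incl$ is already multiplicative on the nose.

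First, I would check that $\incl$ is well-defined and a DGA homomorphism. For $\varphi\in\ker(g^\#)$ we have $g^\#(\varphi)=0=i_0^{\tilde\#}(0)$, so $(\varphi,0)\in Q^\bullet$. By the results of Section \ref{sect:ProductQ}, $g^\#$ and $i_0^{\tilde\#}$ are DGA homomorphisms for the chosen cellular diagonal approximations, so the pullback $Q^\bullet$ is a pullback in the category of DGAs. In particular, its product is inherited componentwise from $\C{\bar X}\oplus\rC{\cL}$. Since $\ker(g^\#)$ is a sub-DGA of $\C{\bar X}$, the equality $(\varphi_1,0)\cdot(\varphi_2,0)=(\varphi_1\cdot\varphi_2,0)$ shows that $\incl$ is a cochain-level DGA homomorphism, hence automatically multiplicative on cohomology.

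Next, I would consider the short exact sequence of cochain complexes
\[
0\longrightarrow \ker(g^\#)\xrightarrow{\ \incl\ } Q^\bullet\xrightarrow{\ q_2\ }\rC{\cL}\longrightarrow 0.
\]
Exactness at $\ker(g^\#)$ and at $Q^\bullet$ are immediate from the definition of the pullback: the image of $\incl$ consists of all pairs with trivial second coordinate, which is precisely the kernel of $q_2$. Surjectivity of $q_2$ is the key point and reduces to the surjectivity of $g^\#$: given $\psi\in\rC{\cL}$, surjectivity of $g^\#$ (established in the proof of Proposition \ref{prop:rqis}) produces $\varphi\in\C{\bar X}$ with $g^\#(\varphi)=i_0^{\tilde\#}(\psi)$, so $(\varphi,\psi)\in Q^\bullet$ lifts $\psi$.

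Finally, I would run the associated long exact cohomology sequence. The complex $\rC{\cL}$ computes the reduced cellular cohomology of $\cL$, and $\cL$ is contractible because it is a cone, so $H^\bullet(\rC{\cL})=\widetilde H^\bullet(\cL)=0$. The long exact sequence therefore collapses and gives $\incl^*\colon H^\bullet(\ker(g^\#))\xrightarrow{\cong}H^\bullet(Q^\bullet)$. Combined with the DGA compatibility from the first step, $\incl^*$ is a ring isomorphism. No step is really difficult here, but the one point that needs care is verifying that the product on $Q^\bullet$ is genuinely the componentwise one, which is exactly why the DGA-pullback observation of the previous subsection is needed to upgrade a formal cochain-complex argument to a statement about rings.
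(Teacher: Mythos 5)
Your proof is correct, and it takes a slightly more economical route than the paper's. The paper establishes the quasi-isomorphism by comparing two short exact sequences, namely $0 \to \ker(g^{\#}) \to \C{\bar{X}} \to \C{\tL} \to 0$ and $0 \to Q^\bullet \to \C{\bar{X}} \oplus \rC{\cL} \to \C{\tL} \to 0$, with vertical maps $\incl$, the first-factor inclusion, and the identity; the first-factor inclusion is a quasi-isomorphism because $\rC{\cL}$ is acyclic, and the 5-Lemma then finishes the argument. You instead package the same two inputs --- surjectivity of $g^{\#}$ (hence of $q_2$) and the acyclicity of $\rC{\cL}$ --- into the single short exact sequence $0 \to \ker(g^{\#}) \xrightarrow{\incl} Q^\bullet \xrightarrow{q_2} \rC{\cL} \to 0$, whose long exact cohomology sequence collapses directly, with no 5-Lemma needed. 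Your identification of $\ker(q_2)$ with the image of $\incl$ is correct precisely because the pullback condition forces $g^{\#}\varphi = i_0^{\tilde{\#}}(0) = 0$ for pairs of the form $(\varphi,0)$. The multiplicativity step (componentwise product on the DGA pullback, $\incl$ a sub-DGA inclusion) is identical to the paper's. The only thing the paper's longer route buys is that it literally reuses the diagram and the surjectivity argument from Proposition \ref{prop:rqis}; mathematically your version is a clean simplification.
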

\begin{proof}
Since $Q^{\bullet}$ was defined as the pullback in diagram (\ref{pullbackQ}), it can be explicitely written as 
\[
Q^\bullet = \left\{ (\varphi, \psi) \in C^\bullet (\bar{X}) \oplus \rC{\cL} | g^\# \varphi = i_0^{\tilde{\#}} \psi \right\}
\]
 The boundary map and cup product on $Q^\bullet$ are restrictions of the direct sums of the boundary maps and cup products in $C^\bullet (\bar{X})$ and $\rC{\cL}$. As established before $ker(g^{\#})$ is a sub-DGA of $C^\bullet (\bar{X})$. Therefore $incl$ is a sub-DGA inclusion and induces a multiplicative map on cohomology.

To see that $incl$ is a quasi-isomorphism consider the following diagram.
\[
  \begin{tikzcd}[column sep=small]
    0 \ar{r} & Q^\bullet \ar{r} & C^\bullet (\bar{X}) \oplus \rC{\cL} \ar{r}{F} & C^\bullet (\tL) \ar{r} & 0 \\
    0 \ar{r} & ker(g^{\#}) \ar[hook]{u}{incl} \ar{r} & C^\bullet (\bar{X}) \ar[hook]{u}{qis} \ar{r}{g^\#} & C^\bullet (\tL) \ar[equal]{u} \ar{r} & 0
  \end{tikzcd}
\]
The map $F$ sends $(\varphi, \psi) \mapsto g^\# \varphi - i_0^\# \psi$ and is surjective, since $g^\#$ is surjective as was established in the proof of Proposition \ref{prop:rqis}. The lower sequence also appeared already in this proof. The vertical map in the middle is the inclusion as the first factor. It is a quasi-isomorphism, since all the reduced cohomology groups of the cone of $\tL$ vanish. Applying the 5-Lemma to the induced pair of long exact cohomology sequences then proves that $incl$ is a quasi-isomorphism.
\end{proof}

Combining all the results of this section proves Theorem \ref{isopullback}. For later purposes, we establish the following concrete description of that isomorphism. 

\begin{thm}
The map $\phi_1$ induced by the universal property of the pullback in diagram 
\[
\begin{tikzcd}
\rC{\I} \drar{\phi_1} \arrow{ddr}[swap]{i_{\bar{X}}^{\tilde{\#}}} \arrow{rrd}{i_{\cL}^{\#}}                        &&\\
& Q^{\cdot} \arrow{r} {q_2}\arrow{d}{q_1}& \rC{\cL} \arrow{d}{i_0^{\tilde{\#}}} \\
&\C{\bar{X}} \arrow{r}{g^{\#}}& \C{\tL}
\end{tikzcd}
\]
 induces the ring isomorphism given by the composition
 \[
\begin{tikzcd}
  \widetilde{H}^\bullet (\I) \ar{r}{\cong}[swap]{p^*} & H^\bullet (M(g),\tL) \ar{r}{\cong}[swap]{(r|^*)^{-1}} & H^\bullet (ker(g^{\#})) \ar{r}{\cong}[swap]{\incl^*} & H^\bullet (Q^\bullet),
\end{tikzcd}
 \]
 where $r|^*$ is an abbreviation of $(( r^{\#}|_{ker(g^{\#})})^*)^{-1}$.
 \end{thm}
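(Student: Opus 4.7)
The plan is to prove the equality of cohomology maps $\phi_1^{*} = \incl^{*}\circ (r|^{*})^{-1}\circ p^{*}$ by tracing an arbitrary cocycle $\omega \in \rC{\I}$ through explicit cochain constructions. Since $\cL$ is contractible, $\rC{\cL}$ is acyclic and I can choose $\eta \in \widetilde{C}^{\bullet-1}(\cL)$ with $d\eta = i_{\cL}^{\#}(\omega)$. The proof of Proposition \ref{prop:rqis} shows that $g^{\#}$ is surjective, so I lift $i_{0}^{\tilde{\#}}(\eta)$ to a cochain $\alpha \in C^{\bullet-1}(\bar{X})$ and set $\varphi := i_{\bar{X}}^{\#}(\omega) - d\alpha$. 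The pushout identity $i_{\bar{X}}\circ g = i_{\cL} \circ i_{0}$, which holds by construction of $\I$, gives $g^{\#}(\varphi) = i_{0}^{\tilde{\#}}(i_{\cL}^{\#}(\omega)) - d(i_{0}^{\tilde{\#}}(\eta)) = 0$, so $\varphi \in \ker(g^{\#})$. By design $(\alpha,\eta)\in Q^{\bullet-1}$, and a direct computation shows $d(\alpha,\eta) = \phi_1(\omega) - \incl(\varphi)$, which yields $[\phi_1(\omega)] = \incl^{*}[\varphi]$ in $H^{\bullet}(\Q)$.

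To complete the argument I must verify $[\varphi] = (r|^{*})^{-1}p^{*}[\omega]$, equivalently $[r^{\#}(\varphi)] = [p^{\#}(\omega)]$ in $H^{\bullet}(M(g),\tL)$. Inserting the definition of $\varphi$ yields
\[ r^{\#}(\varphi) - p^{\#}(\omega) = (i_{\bar{X}}\circ r)^{\#}(\omega) - p^{\#}(\omega) - d(r^{\#}(\alpha)). \]
The maps $p$ and $i_{\bar{X}}\circ r$ from $M(g)$ to $\I$ are homotopic via the standard cylinder homotopy $H((x,t),s)=[x,st]$, stationary on $\bar{X}$, so a cellular cochain homotopy $h$ satisfies $p^{\#} - (i_{\bar{X}}\circ r)^{\#} = dh + hd$. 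Applied to the cocycle $\omega$, this reduces the task to showing that the cochain $r^{\#}(\alpha) + h(\omega)$ lies in the relative subcomplex $C^{\bullet-1}(M(g),\tL)$. From $r\circ i_{\tL} = g$ one obtains $i_{\tL}^{\#}(r^{\#}(\alpha)) = g^{\#}(\alpha) = i_{0}^{\tilde{\#}}(\eta)$; and the restriction of $H$ to $\tL\times\{1\}\times I$ factors through $\cL\hookrightarrow\I$ as the canonical cone contraction onto the cone point, so that $i_{\tL}^{\#}(h(\omega))$ arises from an algebraic cone contraction of $\cL$ applied to $i_{\cL}^{\#}(\omega)$, matching $-i_{0}^{\tilde{\#}}(\eta)$ up to sign.

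The principal difficulty is this last matching: both $\eta$ (an arbitrary algebraic lift) and the restriction of $h$ to $\tL$ (a specific geometric construction) trivialise $i_{\cL}^{\#}(\omega)$ via the contractibility of $\cL$, and the argument needs them to agree on cochain level in $C^{\bullet-1}(\tL)$ for the contributions to cancel. A clean resolution is to equip $\cL$ with its standard cone cell structure, to choose a cellular approximation of $H$ compatible with this structure, and to take $\eta$ to be the image of $i_{\cL}^{\#}(\omega)$ under the explicit algebraic cone homotopy; with this choice $r^{\#}(\alpha) + h(\omega)$ lies in the relative subcomplex by construction. Alternatively, one observes that any two lifts of the same coboundary in an acyclic complex differ by an exact cochain, so any discrepancy between the algebraic and geometric nullhomotopies contributes only a relatively exact error that does not affect cohomology classes. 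Either route yields $r|^{*}[\varphi] = p^{*}[\omega]$, and combined with the first step this gives the desired factorisation of $\phi_1^{*}$.
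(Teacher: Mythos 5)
Your first step is essentially the paper's own argument in different clothing: with the canonical cone cell structures one has $\rC{\I}=\C{\bar{X}}\oplus C^{\bullet-1}(\tL)$ and $\phi_1(a,b)=(a,(g^{\#}a,b))$, and the paper's correcting cochain $(\beta,(b,0))$ is exactly your $(\alpha,\eta)$ for the canonical choices $\eta=(b,0)$ and $\alpha=\beta$ a $g^{\#}$-preimage of $b$; so $[\phi_1(\omega)]=\incl^*[\varphi]$ comes out the same way. Where you genuinely diverge is the second step. The paper never invokes a homotopy between $p$ and $i_{\bar{X}}\circ r$: it writes down explicit models $\C{M(g)}=\C{\bar{X}}\oplus C^{\bullet-1}(\tL)\oplus\C{\tL}$ and $\C{M(g),\tL}=\C{\bar{X}}\oplus C^{\bullet-1}(\tL)$ in which $p^{\#}(a,b)=(a,b)$ and $r^{\#}(a-d\beta)=(a-d\beta,0)$, so the two representatives differ by $d(\beta,0)$ on the nose. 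Your route through the cylinder homotopy $H$ is workable but carries extra overhead: you must choose a cellular approximation of $H$ relative to $\tL\times I$ so that the induced cochain homotopy satisfies $i_{\tL}^{\#}\circ h=h_{\tL}\circ i_{\cL}^{\#}$ on the nose --- precisely the naturality the paper has to fight for in Proposition \ref{prop:rmult}. Granting that, your first resolution (take $\eta$ to be the canonical algebraic cone contraction applied to $i_{\cL}^{\#}\omega$, which is again $(b,0)$) closes the argument. Your second resolution is stated too loosely: if $i_{\tL}^{\#}(h\omega+r^{\#}\alpha)$ is merely \emph{exact} in $C^{\bullet-1}(\tL)$, the cochain $h\omega+r^{\#}\alpha$ still fails to lie in the relative subcomplex; you must in addition lift a primitive through the surjection $i_{\tL}^{\#}\colon C^{\bullet-2}(M(g))\twoheadrightarrow C^{\bullet-2}(\tL)$ and subtract its coboundary before concluding --- the same repair the paper performs in the $l+m=k+1$ case of Proposition \ref{prop:rmult}. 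With that fix both of your resolutions succeed; the paper's explicit-model computation is shorter simply because it sidesteps homotopies altogether.
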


\begin{proof}

We equip $\cL$ with the canonical cell structure induced from the cell structure of $\tL$ (see \cite[Section 2.3]{Fritsch} for more details) and choose the apex of the cone as base point. In this situation
\[
\rC{\cL}=\C{\tL}\oplus C^{\cdot -1}(\tL),
\]
with differential 
\[{\left(\begin{smallmatrix} d_{\tL} & 0 \\ id_{\tL}^{\#} & -d_{\tL}[-1] \end{smallmatrix} \right).}\]
Then, for a cochain $(b_1,b_2) \in \rC{\cL}$, we get $i_0^{\#}(b_1, b_2)=b_1.$
Analogously to $\cL$, we equip $\I= cone(g)$ with the canonical cell structure induced from the cell structures of $\tL$ and $\bar{X}$ and again choose the apex of the cone as the base point. Thus,
\[
\rC{\I}=\C{\bar{X}}\oplus C^{\cdot -1}(\tL),
\]
with differential 
\[{\left(\begin{smallmatrix} d_{\bar{X}} & 0 \\ g^{\#} & -d_{\tL}[-1] \end{smallmatrix} \right)}.\]
Due to the canonical choices of the cell structures of the cones, we have 
\[
i_{\cL}^{\#}(a,b)=(g^{\#}a,b)
\]
and 
\[
i_{\bar{X}}^{\tilde{\#}}(a,b)=a~.
\]
Since
\[
i_0^{\tilde{\#}}\circ i_{\cL}^{\#} (a,b)=i_0^{\tilde{\#}}(g^{\#} a,b)=g^{\#} a= g^{\#}\circ i_{\bar{X}}^{\tilde{\#}} (a,b)~,
\]
the outer square in the diagram above commutes and the universal property defines the map $\phi_1$ uniquely.
As mentioned at the beginning of section \ref{sect:pullback}, $\phi_1$ is a cochain map by construction but it is not clear whether it is a DGA homomorphism or whether it at least induces a multiplicative map on cohomology. We prove now that $\phi_1$ induces the same map on cohomology as the composition that we have constructed previously.

Considering $Q^\bullet$ as a subset of $\C{\bar{X}}\oplus \rC{\cL}$ allows us to write
\[
\phi_1:\C{\bar{X}}\oplus C^{\cdot -1}(\tL) \to Q^{\cdot}, (a,b)\mapsto (a,(g^{\#}a,b))
\]
explicitly.
If $(a,b)$ is a closed cochain, it holds that $da = 0$ and $g^\# a = db$. Since $g^\#: \C{\bar{X}} \to \C{\tL}$ is surjective, we can choose a cochain $\beta \in \C{\bar{X}}$ with $g^\# \beta = b.$ Then, $(a - d \beta, (0,0) ) \in \Q$ is a representantive of the cohomology class of $\phi_1(a,b)$, which can be seen by the following calculation.
    \[
      \left( a - d \beta, \left( 0,0 \right) \right) + d \left( \beta, (b,0) \right) = \left( a - d \beta, (0,0) \right) + (d\beta, (db, b)) = \left( a, (db, b) \right) = \phi_1 (a,b).
    \]
    
Since $g^\# (a - d \beta) = g^\# a - d b = 0,$ we can write $\left( a - d\beta, (0,0) \right)= \incl(a - d\beta)$. In analogy to the models used for cellular cochain complexes of $\cL$ and $\I$, the cellular cochain complex of the mapping cylinder $M(g)$ can be identified with $\C{M(g)} = \C{\bar{X}} \oplus C^{\bullet -1} (\tL) \oplus \C{\tL}$ with differential
    \[ \left( \begin{smallmatrix} d_{\bar{X}} & 0 & 0 \\ g^\# & - d_{\tL} & \id \\ 0 & 0 & d_{\tL} \end{smallmatrix} \right). \]
    The complex relative to the top of the cylinder becomes $\C{M(g),\tL} = \C{\bar{X}} \oplus C^{\bullet -1 } (\tL)$ with induced differential.
    The cochain map $r^\#: \C{\bar{X}} \to \C{M(g)}$ maps $a \in \C{\bar{X}}$ to $(a, 0 , g^\# a)$. 
    Therefore, the restriction $r^\# : ker(g^{\#}) \to \C{M(g),\tL}$ maps $a \in \C{\bar{X},\tL}$ to $(a,0) \in \C{M(g),\tL}.$ The cochain map $p^\#: \rC{\I} \to \C{M(g),\tL}$ becomes $p^\#(a,b) = (a,b)$. The following calculation then shows that $p^* \left( [(a,b)] \right) = r|^* \left( [a - d\beta] \right)$ and thus implies that $\phi_1^*$ is the composition of the proposition, 
    \[ p^\# (a,b) - r^\# (a - d\beta) = (a,b) - (a - d \beta, 0) = (d \beta, b ) = (d \beta ,g^\# \beta) = d ( \beta, 0)~. \]
    \end{proof}

\section{A Multiplicative de Rham Theorem for HI}\label{section_multdR}

The goal of this section is to prove that the cohomology rings $\HH{\WI}$ and  $\HH{\C{\I}}$ are isomorphic. In Section \ref{cellulardR}, we extend the de Rham map to map from singular cochains to cellular cochains since we are forced to use cellular cochains in Section \ref{sect:middlepart}. This is established for both the absolute and relative case and is then applied in Section \ref{sect:deRhamPart} to construct the first part $\tilde{\rho}$ of our eventual intersection space cohomology de Rham map $\phi$. For each case we demonstrate that the maps induce multiplicative maps on cohomology. In Section \ref{sect:pullback} we established that the cellular cochains of $\I$ fit up to the isomorphism $\phi_1$ into a pullback square. This property is now used  to construct a map $\phi_2$ in Section \ref{sect:middlepart} that combines with $\tilde{\rho}$ and $\phi_1^{-1}$ into $\phi$. We take care to construct $\phi_2$ as DGA homomorphisms. Accordingly the induced map on cohomology is multiplicative and so is the induced map of $\phi$. Section \ref{sect:formphi} gives the explicit form of $\phi$ on cochain level and Section \ref{qisphi} establishes that $\phi$ is a quasi-isomorphism. Accordingly the induced map is a ring isomorphism and we have proven our result.

\subsection{de Rham Map to Cellular Cochains}\label{cellulardR}
Let $C_\bullet (M)$ denote the cellular, $S_\bullet (M)$ the singular and $\widehat{S}_\bullet (M) $ the normalized singular chain complex of $M$, $C^\bullet (M),$ $S^\bullet (M)$ and $ \widehat{S}^\bullet (M)$ the corresponding cochains complexes and define relative chains and cochains as usual. 

In the rest of this article, we work with a de Rham type map $\rho: \Omega^\bullet (M) \rightarrow C^\bullet (M)$ and the corresponding relative morphism. $M$ is a smooth manifold, possibly open or with boundary, with some CW decomposition. In the relative setting, $L$ is a submanifold of $M$ and we choose CW decompositions such that $L \subset M$ is a CW-subcomplex.  We outline the construction of the cellular de Rham maps $\rho$ and $\rho_{rel}$ and explain why they induce multiplicative isomorphisms on cohomology. 

We can restrict a singular cochain to the nondegenerate simplices and get an element of $\widehat{S}^n (M).$ This defines a restriction operator
\[ \text{restr}: S^n (M) \twoheadrightarrow \widehat{S}^n (M), \]
which is clearly multiplicative.
The geometric realization $\Gamma M$ of the non-degenerate singular simplices of $M$ is a CW complex with one $n-$cell for each non-degenerate singular $n-$simplex and with cellular chain complex $C_\bullet (\Gamma M)$ naturally isomorphic to $ \widehat{S}_\bullet (M).$ Further, there is a weak equivalence $\gamma: \Gamma M \rightarrow M$. In our setting the Whitehead Theorem implies that $\gamma$ actually is a homotopy equivalence. Taking a CW-approximation of the homotopy inverse of $\gamma$, which is still a homotopy equivalence and which we denote by $\delta: M \rightarrow \Gamma M$, this gives a cochain homotopy equivalence 
\( \delta^{\#} : \widehat{S}^\bullet (M) = C^\bullet (\Gamma M) \rightarrow C^\bullet (M) \) 
that induces a ring isomorphism on cohomology. This construction also carries over to the relative case.

The last component of the multiplicative de Rham isomorphism we use henceforth is Lee's smoothing operator (see \cite[pp. 474 ff]{Lee}). It is a chain homotopy equivalence $s: S_\bullet (M) \rightarrow S_\bullet^\infty (M) $, where $S_\bullet^\infty (M) $ is the chain complex of smooth singular chains. Given a submanifold $L \subset M,$ the operator can be defined in such a way that it commutes with the inclusion of this submanifold in $M$.
Hence, $s$ induces a quasi-isomorphism of relative chain complexes 
\[ s: S_\bullet (M,L) \xrightarrow{qis} S_\bullet^\infty (M,L).\] 
The induced maps on the absolute and relative cochain complexes are also quasi-isomorphisms and therefore induce isomorphisms, i.e.
\[ 
\begin{split}
s^*: ~& H_{sing, \infty}^\bullet (M) \xrightarrow{\cong} H_{sing}^\bullet (M), \\
s^*: ~& H_{sing, \infty}^\bullet (M,L) \xrightarrow{\cong} H_{sing}^\bullet (M,L).
\end{split} 
\]
Since the inverses are induced by the restrictions $ S^\bullet (M) \rightarrow S_\infty^\bullet (M) $ and $ S^\bullet (M,L) \rightarrow S_\infty^\bullet (M,L) $, which are clearly multiplicative, the induced maps $s^*$ on cohomology are ring isomorphisms.

\vspace{0.2cm}
\noindent
Let us denote the linear dual to $s$ by $s^\dagger$. We define the de Rham maps 
$ \rho: \Omega^\bullet (M) \rightarrow C^\bullet (M) $ 
and $\rho_{rel}: \Omega^\bullet (M,L) \rightarrow C^\bullet (M,L) $ 
by the following commutative diagrams. 
\[ 
\begin{tikzcd}
\Omega^\bullet (M) \ar{r}{\rho_s} \ar{ddrr}[swap]{\rho} 
& S_\infty^\bullet (M) \ar{r}{s^\dagger} & S^\bullet (M) \ar{d}{restr} \\
\ & \ & \widehat{S}^\bullet (M) \ar{d}{\delta^{\#}} \\
\ & \ & C^\bullet (M),
\end{tikzcd}
\]
where $\rho_s$ is the classical de Rham map and 
\[ 
\begin{tikzcd}
\Omega^\bullet (M, L) \ar{r}{\rho_{s,rel}} \ar{ddrr}[swap]{\rho_{rel}} 
& S_\infty^\bullet (M,L) \ar{r}{s^\dagger} & S^\bullet (M,L) \ar{d}{restr} \\
\ & \ & \widehat{S}^\bullet (M,L) \ar{d}{\delta_{rel}^{\#}} \\
\ & \ & C^\bullet (M,L),
\end{tikzcd}
\]
where $\rho_{s,rel} $ is the relative de Rham map of Theorem \ref{thm:rel_dR_mult}. All the maps we used to define the de Rham maps $\rho$ and $\rho_{rel}$ are DGA morphisms, or at least induce ring isomorphisms on cohomology. For the classical de Rham maps this follows by \cite[Theorem III-3.1]{Bredon_ST} in the absolute case and by the relative de Rham Theorem in Section \ref{subs:rel_derham_map} in the relative case. Hence the maps $ \rho$ and $\rho_{rel}$ induce ring isomorphisms on cohomology. This proves the following theorem.

\begin{restatable}[Cellular multiplicative relative de Rham Theorem]{thm}{cmrdRT}\label{hyp}
Let $L\subset M$ be a smooth submanifold of the smooth manifold $M$. Choose a CW-structure on M such that $L \subset M$ is also a CW-subcomplex.
Then, the map $\rho_{rel}$ induces a ring isomorphism 
\[
\rho_{rel}^*: H_{dR}^{\bullet}(M,L) \to H^{\bullet} \left(C^{\bullet}(M,L) \right).
\]
\end{restatable}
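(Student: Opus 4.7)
The plan is to exploit the factorization of $\rho_{rel}$ built up in Section \ref{cellulardR},
\[
\rho_{rel} \;=\; \delta_{rel}^{\#} \circ \text{restr} \circ s^{\dagger} \circ \rho_{s,rel},
\]
and to verify that each of the four constituents induces a multiplicative isomorphism on cohomology; composition will then automatically produce a ring isomorphism $\rho_{rel}^*$.

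First, I would invoke Theorem \ref{thm:rel_dR_mult} with $\A=\RR$ and $\Phi$ the family of all closed subsets of $M$ to conclude that $\rho_{s,rel}^*$ is a multiplicative isomorphism from $H_{dR}^{\bullet}(M,L)$ onto ${}_{S}^{\infty}H^{\bullet}(M,L)$. Next, Lee's smoothing operator $s$ is a chain homotopy inverse of the restriction $\rho\colon S^{\bullet}(M,L) \twoheadrightarrow S_{\infty}^{\bullet}(M,L)$, which, being defined pointwise on cochains, is evidently a DGA morphism; consequently $(s^{\dagger})^{*} = (\rho^{*})^{-1}$ is multiplicative on cohomology.

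For the restriction to nondegenerate singular cochains, I would observe that the Alexander--Whitney cup product of two normalized cochains is again normalized, so $\text{restr}\colon S^{\bullet}(M,L) \twoheadrightarrow \widehat{S}^{\bullet}(M,L)$ is a DGA morphism; it is a quasi-isomorphism by the classical absolute statement and a five-lemma on the two pair sequences. Finally, the CW-approximation $\delta\colon M \to \Gamma M$ of the homotopy inverse of the weak equivalence $\gamma$ is, under our CW-choice making $L \subset M$ a subcomplex, a homotopy equivalence of pairs $(M,L) \to (\Gamma M, \Gamma L)$ by the Whitehead theorem; hence $\delta_{rel}^{\#}$ is a cochain homotopy equivalence and the induced map on cohomology is multiplicative by naturality of the cellular cup product.

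The main obstacle I anticipate is the relative version of the $\delta$ step: one must check that $\gamma$, its homotopy inverse, and the subsequent CW-approximation can all be arranged compatibly with the CW-subcomplex structure of $L \subset M$, so that the underlying chain homotopies descend to the quotient complexes and yield genuine cochain homotopy equivalences of the relative complexes. Once this compatibility is secured, a five-lemma on pair cohomology sequences promotes each of the four factors to a ring isomorphism on relative cohomology, and the multiplicativity of the composite $\rho_{rel}^{*}$ follows at once.
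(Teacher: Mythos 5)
Your proposal follows exactly the paper's own argument: the same four-fold factorization $\rho_{rel}=\delta_{rel}^{\#}\circ\text{restr}\circ s^{\dagger}\circ\rho_{s,rel}$, with $\rho_{s,rel}$ handled by Theorem \ref{thm:rel_dR_mult}, $s^{\dagger}$ by multiplicativity of the restriction to smooth cochains, and the remaining two factors by (quasi-)multiplicativity on cochain level. If anything, you are more explicit than the paper about the one delicate point --- arranging $\gamma$, $\delta$ and the homotopies compatibly with the subcomplex $L\subset M$ so that the relative $\delta^{\#}$ step goes through --- which the paper dispatches with ``this construction also carries over to the relative case.''
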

 
 \subsection{Multiplicativity of $\tilde{\rho}$}\label{sect:deRhamPart}

In this section, we construct a map $\tilde{\rho}$ that translates the construction of $\WI$ to cochains of CW complexes. We then use Theorem \ref{hyp} to prove that $\tilde{\rho}$ induces a multiplicative map on cohomology.

Recall that
\[
 \WI:=\{\omega\in \WX| i_\partial^{*}(\omega)\in \TW\}~,
 \]
so the de Rham map $\rho_{\bar{X}}$ on $\WX$ restricts to a map $\rho_{\bar{X}}|$ on $\WI$. By the naturality of the de Rham map its restriction $\rho_{\bar{X}}|$ factors over
\[
\Rb:=\{x \in \C{\bar{X}}| i_\partial^{\#}(x)\in \L \}~
\]
with $\L$ the naive cotruncation (i.e. $0$ in degrees lower than $k$ and $\C{L}$ in degrees greater than or equal to $k$). We define $\tilde{\rho}$ to be this factor. So with  $incl.:\Rb\to \C{\bar{X}}$ the sub-complex inclusion we have $\rho_{\bar{X}}|=incl.\circ \tilde{\rho}$.


Note that with the definition
\[
\Crel :=\{x \in \C{\bar{X}}| i_\partial^{\#}(x)=0\}
\]
we have sub-complex inclusions $\Crel \subset \Rb \subset \C{\bar{X}}$, analogously to the inclusions $\Omega^\bullet(\bar{X},L)\subset \WI \subset \Omega^\bullet(\bar{X})$. Recall that in the proof of Theorem \ref{thm:gDGAhom} we established that $i_{\partial}^{\#}$ is multiplicative and the product $\cup_{\bar{X}}$ on $\C{\bar{X}}$ from Section \ref{sect:pullback} restricts to $\Crel$. The restriction of $\cup_{\bar{X}}$ to $\Rb$ is also well-defined due to the construction of $\Rb$ via cotruncation together with the graded nature of the cup product. Therefore the inclusions above are sub-DGA inclusions. 

\begin{prop}\label{rhomult}
The map $\tilde{\rho}$ induces a multiplicative map on cohomology.
\end{prop}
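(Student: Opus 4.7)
The strategy is to compare the commutative diagram of short exact sequences of sub-DGAs
\[
\begin{tikzcd}
0 \arrow{r} & \Wrel \arrow{r}\arrow{d}{\rho_{rel}} & \WI \arrow{r}{i_\partial^*}\arrow{d}{\tilde\rho} & \TW \arrow{r}\arrow{d}{\rho_L|} & 0 \\
0 \arrow{r} & \Crel \arrow{r} & \Rb \arrow{r}{i_\partial^\#} & \L \arrow{r} & 0
\end{tikzcd}
\]
where $\rho_L|$ denotes the restriction of the cellular de Rham map on $L$ to the cotruncations. Exactness on the right in the top row uses that any form on $L$ extends to $\bar{X}$ via a collar; in the bottom row it uses that $L\subset\bar{X}$ is a CW-subcomplex, so $i_\partial^\#$ is surjective, as in the proof of Proposition \ref{prop:rqis}. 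The horizontal maps are DGA morphisms, using the cellular diagonal construction of Theorem \ref{thm:gDGAhom} for $i_\partial^\#$; this realises both $\WI$ and $\Rb$ as pullbacks in the category of DGAs.

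I would then verify that both outer vertical maps are multiplicative on cohomology. For $\rho_{rel}^*$ this is exactly Theorem \ref{hyp}. For $\rho_L|^*$: since $k\geq 1$, any cup product of classes in $H^\bullet(\TW)$ or $H^\bullet(\L)$ has total degree $\geq 2k\geq k+1>k$, a range where both $\TW$ and $\L$ coincide with the full de Rham and cellular complexes of $L$; moreover the cochain homotopy witnessing multiplicativity of the absolute de Rham map of $L$ has degree $\geq 2k-1\geq k$, hence lies in $\L$. So $\rho_L|^*$ inherits multiplicativity from the absolute de Rham theorem on $L$.

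For the middle map, given closed $\alpha,\beta\in\WI$, form the defect $u:=\tilde\rho(\alpha\wedge\beta)-\tilde\rho(\alpha)\cup\tilde\rho(\beta)\in\Rb$. The commutativity of the right square and the DGA property of $i_\partial^\#$ identify $i_\partial^\# u$ with the $\rho_L|$-defect of $i_\partial^*\alpha$ and $i_\partial^*\beta$, hence exact in $\L$; the sub-DGA inclusion $\Rb\hookrightarrow\C{\bar{X}}$ combined with multiplicativity of $\rho_{\bar{X}}^*$ makes $u$ exact in $\C{\bar{X}}$. To conclude $[u]=0$ in $H^\bullet(\Rb)$, I would decompose $\alpha=\alpha_{rel}+\alpha_{bdry}$ on the cochain level via a collar cutoff, with $\alpha_{rel}\in\Wrel$ and $\alpha_{bdry}$ a collar extension of $i_\partial^*\alpha$, and analogously for $\beta$; expanding $u$ then produces a purely relative summand, which is exact in $\Crel\subseteq\Rb$ by Theorem \ref{hyp}, a purely boundary summand, which is exact in $\Rb$ by multiplicativity of $\rho_L|^*$ together with naturality of the de Rham map under the collar pullback, and two mixed summands that lie in the ideal $\Crel\subset\Rb$ and can be absorbed by applying Theorem \ref{hyp} to the one-sided relative products. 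The main obstacle will be these mixed summands, because the splitting does not preserve closedness; I expect the resolution to consist of showing that the non-closed correction terms across the four summands combine into a single exact cochain in $\Rb$, exploiting that the obstruction cochains are supported in the collar and controlled simultaneously by the $L$-homotopy and the $\bar{X}$-homotopy, which is exactly the information encoded by the cotruncation condition defining $\Rb$.
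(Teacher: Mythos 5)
Your setup of the two short exact sequences and your treatment of the outer vertical maps are sound, and your degree argument for $\rho_L|$ (products of cotruncated classes live in total degree $\geq 2k>k$, where the cotruncations coincide with the full complexes) is exactly the right kind of observation. The gap is in the middle map, and it is the whole content of the proposition: knowing that the defect $u=\tilde\rho(\alpha\wedge\beta)-\tilde\rho(\alpha)\cup\tilde\rho(\beta)$ is exact in $\C{\bar{X}}$ and that $i_\partial^{\#}u$ is exact in $\L$ does not give $[u]=0$ in $H^\bullet(\Rb)$ --- you need a primitive of $u$ whose restriction to $L$ lies in $\L$, and an absolute primitive cannot in general be corrected to such a one. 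Your proposed fix, splitting $\alpha=\alpha_{rel}+\alpha_{bdry}$ by a collar cutoff, does not close this gap: as you note yourself, the summands are no longer closed, so Theorem \ref{hyp} (a statement about cohomology classes) says nothing about the four products, and the claim that the resulting non-closed correction terms ``combine into a single exact cochain in $\Rb$'' is asserted, not proved. As it stands this is a plan rather than a proof.

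The resolution is much simpler and needs no collar decomposition: apply your degree dichotomy directly to the closed forms $\alpha\in\WIp$, $\beta\in\WIq$ rather than only to their boundary restrictions. If $q=0$ or $r=0$, the closed form of degree $0$ is a constant vanishing on $L$ (since $k\geq 1$ forces $\WIzero=\Omega^0(\bar{X},L)$), hence zero, and multiplicativity holds on the nose. If $0<q,r<k$, the cotruncation condition forces $\alpha,\beta\in\Wrel$; their wedge is again relative, and Theorem \ref{hyp} hands you a primitive of the defect already lying in $\Crelpqq\subset\Rbpqq$. If, say, $q\geq k$ and $r\geq 1$, then $q+r-1\geq k$, so $\Rbpqq=C^{q+r-1}(\bar{X})$ and the primitive supplied by the absolute de Rham theorem is automatically admissible. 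These cases are exhaustive, and in each one the primitive lands in $\Rb$ for free --- which is precisely the point your argument leaves open.
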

\begin{proof}
Let $\omega \in \Wbp$ and $\eta \in \Wbq$ be two closed forms. Let us consider the case $q=0$ and $r$ arbitrary. The case $q$ arbitrary and $r=0$ is analogous. For $q=0$, the closed form $\omega$ is a constant function. Recall that $k=n-1-\bar{p}(n)$ with $\bar{p}$ a Goresky-MacPherson perversity function. The definition of these perversity functions directly implies $k\geq 1$. Therefore, $\WIzero=\Omega^0(\bar{X},L)$ and $\omega$ has to vanish on the boundary. We conclude that $\omega=0$ and 
\begin{align*}
\tilde{\rho}(\omega) \cup \tilde{\rho} (\eta)&=0 \cup \tilde{\rho} (\eta)\\
                            &=0\\
                            &=\tilde{\rho}(0 \wedge\eta)\\
                            &=\tilde{\rho}(\omega \wedge\eta).
\end{align*}
Thus the multiplicativity already holds on cochain level.

Next, consider the case $q$ and $r<k$. Here, we have $\omega \in \Wrelp$ and $\eta \in \Wrelq$. Using that $\rho_{rel}$ is a restriction of $\tilde{\rho}$ we calculate

\begin{align*}
\tilde{\rho}(\omega) \cup \tilde{\rho} (\eta)&=\rho_{rel}(\omega) \cup \rho_{rel} (\eta)\\
                                &=\rho_{rel}(\omega \wedge\eta)+ dx
\end{align*}
for some $x \in \Crelpqq$ since $\rho_{rel}$ induces a multiplicative map on cohomology if we take the relative de Rham theorem into account. Since $\Crelpqq \subset \Rbpqq$, we have $x\in \Rbpqq$. Further, we observe that since $\omega$ and $\eta$ vanish on the boundary, their wedge product does so, too. So,
$ \rho_{rel}(\omega \wedge\eta)=\tilde{\rho}(\omega \wedge\eta)$ 
and, combing all these facts, we see 
\begin{align*}
[\tilde{\rho}(\omega) \cup \tilde{\rho} (\eta)]=[\tilde{\rho}(\omega \wedge\eta)+dx]
=[\tilde{\rho}(\omega \wedge\eta)]
\end{align*} 
with $[\dots]$ denoting the cohomology class in the cohomology of $\Rb$.

Finally, let $q\geq k$ and $r\geq 1$, and in particular $q+r\geq k+1$. We use $\WI\subset \WX$ and that $\tilde{\rho}$ is a restriction of $\rho_{\bar{X}}$ to calculate
\begin{align*}
\tilde{\rho}(\omega) \cup \tilde{\rho} (\eta)&=\rho_{\bar{X}}(\omega) \cup \rho_{\bar{X}} (\eta)\\
&=\rho_{\bar{X}}(\omega \wedge\eta)+ dx = \tilde{\rho} (\omega \wedge \eta) + dx
                                \end{align*}                               
for some $x \in C^{q+r-1}(\bar{X}) $. The last line follows because the classical de Rham map is mutliplicative on cohomology. We used that if $q+r\geq k+1$, then  $\tilde{\rho}=\rho_{\bar{X}}$. Further, $C^{q+r-1}(\bar{X}) = \Rbpqq$ and the multiplicativity also holds in the cohomology of $\Rb$, analogously as for the case above.
\end{proof}

\subsection{Multiplicativity of $\phi_2$}\label{sect:middlepart}

In Section \ref{sect:pullback} and \ref{sect:deRhamPart} we obtained the maps $\phi_1 : \C{\I} \to Q^\bullet$ and $\tilde{\rho} :\WI \to \Rb$ which induce ring isomorphisms on cohomology. To complete our intersection space cohomology de Rham map, we construct the connecting piece $\phi_2 : \Rb \to Q^\bullet$ via the universal property of the pullback $Q^\bullet$ applied to the diagram
\begin{equation}\label{diag:pullbackphi2}
\begin{tikzcd}
\Rb \arrow{ddr}[swap]{incl}\arrow{r}{i_\partial^{\#}}  \arrow{dr}{\phi_2}& \L \arrow{dr}{\mathfrak{X}} &  \\
& Q^{\cdot} \arrow{r}{q_2} \arrow{d}{q_1} & \rC{\cL} \arrow{d}{i_0^{\tilde{\#}}} \\
& \C{\bar{X}} \arrow{r}{g^{\#}}& \C{\tL}~.
\end{tikzcd}
\end{equation}
We construct the map $\mathfrak{X}: \L \rightarrow \rC{\cL}$ in Lemma \ref{gamma} and establish the commutativity of the diagram in Lemma \ref{outerperimeter}. We also establish that Diagram \ref{diag:pullbackphi2} is a diagram in the category of DGAs. Therefore $\phi_2$  a DGA homomorphism.

\begin{lem}\label{gamma}
There is a DGA homomorphism $\mathfrak{X} :\L \to \rC{\cL}$ that satisfies $i_0^{\tilde{\#}}\circ \mathfrak{X} = f^{\#}|_{\L}$. Recall that we always assume $k\geq 1$.
\end{lem}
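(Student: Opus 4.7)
The plan is to define $\mathfrak{X}$ componentwise using the isomorphism $\rC{\cL}^q = C^q(\tL) \oplus C^{q-1}(\tL)$ exhibited at the end of Section~\ref{sect:pullback}. Under this identification $i_0^{\tilde{\#}}$ is the projection $(a,b) \mapsto a$, so the requirement $i_0^{\tilde{\#}} \circ \mathfrak{X} = f^{\#}|_\L$ forces the first component of $\mathfrak{X}(\alpha)$ to be $f^{\#}\alpha$; only the second component $\Psi_\bullet \colon \L^\bullet \to C^{\bullet - 1}(\tL)$ still has to be chosen.

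The dimensionality of $\tL$ does most of the work. Since $\tL$ is $k$-dimensional, $C^q(\tL) = 0$ for $q > k$, so $f^{\#}|_\L$ is concentrated in degree $k$, and moreover the differential $d_\tL^{k-1} : C^{k-1}(\tL) \to C^k(\tL)$ is surjective (this is the same observation used in the proof of Theorem~\ref{thm:gDGAhom}, relying on $H^k(\tL) = 0$). I would fix once and for all a linear splitting $s\colon C^k(\tL) \to C^{k-1}(\tL)$ with $d \circ s = \id$ and put
\[
\mathfrak{X}(\alpha) := \begin{cases} (f^{\#}\alpha,\, s f^{\#}\alpha), & \alpha \in C^k(L) = \L^k, \\ (0,0), & \alpha \in \L^q \text{ with } q > k. \end{cases}
\]
Verifying the cochain property via the explicit differential of $\rC{\cL}$ reduces to the single computation $d\mathfrak{X}(\alpha) = (d f^{\#}\alpha,\, f^{\#}\alpha - d s f^{\#}\alpha) = (0,0) = \mathfrak{X}(d\alpha)$ in degree $k$, using $f^{\#} d\alpha \in C^{k+1}(\tL) = 0$ and $d \circ s = \id$; all other degrees are automatic.

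Multiplicativity then follows, essentially for free, from dimensional vanishing of the target. For $\alpha \in \L^p$ and $\beta \in \L^q$ one has $p, q \geq k$, hence $p + q \geq 2k$ and $p + q - 1 \geq 2k - 1$. Whenever $k \geq 2$ this gives $p + q - 1 > k$, so both $C^{p+q}(\tL)$ and $C^{p+q-1}(\tL)$ vanish and therefore $\rC{\cL}^{p+q} = 0$. In particular both $\mathfrak{X}(\alpha \cup \beta)$ and $\mathfrak{X}(\alpha) \cup \mathfrak{X}(\beta)$ land in the zero group and the DGA-identity is trivial. The only boundary case is $k = 1$, which is handled by the remark at the beginning of the proof of Theorem~\ref{thm:gDGAhom}: for $k < 3$ the map $f$ is a constant (basepoint) map, so $f^{\#}$ annihilates all positive-degree cochains, $\mathfrak{X}$ reduces to the zero map, and the identity is again trivial. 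The main (and essentially only) obstacle one might have worried about---having to make the cup product on $\rC{\cL}$ explicit in the cone direction---is sidestepped, since wherever $\mathfrak{X}(\alpha)\cup \mathfrak{X}(\beta)$ could receive a nontrivial contribution from the cone-direction summand, that summand is itself zero for dimensional reasons.
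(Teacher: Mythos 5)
Your proposal is correct and takes essentially the same approach as the paper's proof: the same identification $\rC{\cL}^q \cong C^q(\tL)\oplus C^{q-1}(\tL)$, the same definition of $\mathfrak{X}$ in degree $k$ via a linear section of the surjective differential $d_{\tL}^{k-1}$ (zero elsewhere), the same cochain-map verification, and the same dimensional-vanishing argument for multiplicativity. The only cosmetic difference is the boundary case $k=1$, which you settle by observing that $\mathfrak{X}=0$ there, while the paper instead notes that $cone(t_{<1}L)$ is one-dimensional so the target group vanishes; the two observations are equivalent.
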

\begin{proof}
Note that the naive cotruncation $\L$ vanishes in degrees smaller than $k$ and $\rC{\cL}$ vanishes in degrees greater than $k+1$. Accordingly, $\mathfrak{X}$ is the trivial map in degrees different from $k$ or $k+1$.

To obtain a cochain map we need to choose $\mathfrak{X}^k$ and $\mathfrak{X}^{k+1}$ such that the diagram
\[
\begin{tikzcd}         
   \Ck{L}    \rar{\mathfrak{X}^k}\dar{d_{L}}&\rCk{\cL}\dar{d_{cone}}\\            
   C^{k+1}(L)    \rar{\mathfrak{X}^{k+1}} & \widetilde{C}^{k+1}(\cL)                 
\end{tikzcd}
\]
commutes.
Using the canonical choice for a cell structure of $\cL$ and choosing the cone point as base point, we again identify 
\[
\rCk{\cL}=C^k(\tL)\oplus C^{k-1}(\tL)
\]
and 
\[ \widetilde{C}^{k+1}(\cL)= 0 \oplus C^k(\tL).
\]
Recall that with this identification, the differential becomes
\[
d_{cone}:={\left(\begin{smallmatrix} d_{\tL} & 0\\ id_{\C{\tL}} & -d_{\tL}[-1]   \end{smallmatrix} \right)}.
\]

Recall that $\Hk{\C{\tL}}=0$ and $ker(d^k)=\Ck{\tL}$ so the differential $d_{\tL}^{k-1}: C^{k-1}(\tL) \to C^{k}(\tL)$ is surjective.
 Accordingly, we might choose a linear section $\mathfrak{x}: C^{k}(\tL) \to C^{k-1}(\tL)$ of $d_{\tL}^{k-1}$, i.e.
\[
d_{\tL}^{k-1} \circ \mathfrak{x} = id~.
\]

We define 
\[
\mathfrak{X}^k :=(id_{C^k(\tL)}\oplus \mathfrak{x})\circ f^{\# ,k}|_{\L}~,
\]
and
\[
\mathfrak{X}^{k+1} :=0. 
\]
Here $f$ is the map from the construction of the Moore approximation and $f^{\# ,k}$ denotes the part in degree $k$ of the induced cochain map $f^{\#}$.  The computation 
\begin{align*}
 d_{cone}^{k} \circ \mathfrak{X}^{k} (x)	  
 = & 
\begin{pmatrix}
d_{\tL}^{k} \circ f^{\# ,k}|_{\L} (x) \\ f^{\# ,k}|_{\L}(x) -d_{\tL}^{k-1} \circ \mathfrak{x}  \circ f^{\# ,k}|_{\L}(x) 
\end{pmatrix}
 \\
&= 
\begin{pmatrix}
0\\ f^{\# ,k}|_{\L}(x) -f^{\# ,k}|_{\L}(x) 
\end{pmatrix}
 \\
&=0 \; = \mathfrak{X}^{k+1} \circ d^{k}_L(x)
\end{align*} 
with $x$ an element of $\Ck{L}$ proves the commutativity of the square. Thus $\mathfrak{X}$ is a cochain map (we made use of $d^{k}_{\tL}= 0$ and the definition of $\mathfrak{x}$ as a section of $d_{\tL}^{k-1}$).

The multiplicativity of $\mathfrak{X}$ is equivalent to the commutativity of the diagram
\[
\begin{tikzcd}
\Lp \otimes \Lq \arrow{rr}{\cup|} \arrow{d}{\mathfrak{X}\otimes \mathfrak{X}} & & \Lpq \arrow{d}{\mathfrak{X}} \\
\rCp{\cL} \otimes \rCq{\cL} \arrow{rr}{\cup} && \rCpq{\cL}
\end{tikzcd}
\]
for all combinations of degrees $q$ and $r$. Here $\cup|$ is the restriction of the cup product of $\C{L}$ to $\L$ and $\cup$ the cup product on $\rC{\cL}$.
If either $q<k$ or $r<k$ then $\Lp \otimes \Lq=0$ and the diagram commutes trivially. On the other hand, if $q\geq k$, $r\geq k$ and $k>1$ we can infer that 
\[q+r>k+1\geq dim(\cL).\]
Thus $\rCpq{\cL}=0$ and the commutativity of the diagram is given for trivial reasons. Recall that $L$ is assumed to be simply connected. In this situation $t_{<1}L:=\{pt\}$ and $f$ the inclusion of the base point constitute a spatial homology truncation of $L$ with $k=1$. Accordingly $cone (t_{<1}L)$ is actually a one dimensional CW complex. On the other hand $k=1$ together with $q\geq k$ and $r\geq k$ implies $q+r\geq 2$ and thus we get the same situation as in the case with $k>1$. In conclusion, the diagram commutes for all degrees $q$ and $r$ independent of the cut-off value $k$. Thus $\mathfrak{X}$ is not only a cochain map but a DGA homomorphism for all $k$.

Finally, recall that $i_0^{\tilde{\#}}(b_1,b_2)=b_1$, implying $i_0^{\tilde{\#}} \circ \mathfrak{X} = f^{\#}|_{\L}$.
\end{proof}

Let us point out that the proof above and especially the part concerning the multiplicativity of $\mathfrak{X}$ abused the fact that the cellular cochain complexes vanish above the dimension of the space. This is the primary motivation to work with cellular cochains in this article. In order to justify the construction of $\phi_2$ via the universal property of the pullback we prove

\begin{lem}\label{outerperimeter}
The diagram
\[
\begin{tikzcd}
\Rb \arrow{dd}[swap]{incl}\arrow{r}{i_\partial^{\#}}  & \L \arrow{d}{\mathfrak{X}}   \\
 & \rC{\cL} \arrow{d}{i_0^{\tilde{\#}}} \\
 \C{\bar{X}} \arrow{r}{g^{\#}}& \C{\tL}~.
\end{tikzcd}
\]
is commutative and products can be chosen such that all maps are DGA homomorphisms.
\end{lem}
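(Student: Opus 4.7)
The plan is to split the argument into commutativity of the square and the DGA statement. Commutativity is a routine perimeter chase, so I would handle it first: for $x \in \Rb$, the defining property $i_\partial^{\#}(x) \in \L$, combined with the identity $i_0^{\tilde{\#}} \circ \mathfrak{X} = f^{\#}|_{\L}$ from Lemma \ref{gamma} and the factorization $g = i_\partial \circ f$, yields
\[
i_0^{\tilde{\#}} \circ \mathfrak{X} \circ i_\partial^{\#}(x) \;=\; f^{\#}\bigl(i_\partial^{\#}(x)\bigr) \;=\; g^{\#}(x) \;=\; g^{\#} \circ \incl(x).
\]

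For the multiplicative statement, my strategy is to iteratively extend the given cellular diagonal approximation $\tilde{\Delta}_{\tL}$ on $\tL$ by successive applications of Proposition \ref{inclusionDGA}. Writing $g = i_\partial \circ i_k \circ h \circ i_{<k}$ as in the proof of Theorem \ref{thm:gDGAhom}, I would (i) extend $\tilde{\Delta}_{\tL}$ along $i_{<k}$ to a cellular diagonal approximation $\tilde{\Delta}_{L/k}$ on $L/k$; (ii) set $\nabla_{L^k} := (h \times h) \circ \tilde{\Delta}_{L/k} \circ h'$ on $L^k$, so that $h^{\#}$ becomes a DGA map exactly as in Theorem \ref{thm:gDGAhom}; (iii) extend $\nabla_{L^k}$ along $i_k$ to a cellular diagonal approximation $\tilde{\Delta}_L$ on $L$; (iv) extend $\tilde{\Delta}_L$ along $i_\partial$ to $\nabla_{\bar{X}}$ on $\bar{X}$; and independently (v) extend $\tilde{\Delta}_{\tL}$ along $i_0$ to $\tilde{\Delta}_{\cL}$ on $\cL$. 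With these cup products, each of $i_{<k}^{\#}$, $h^{\#}$, $i_k^{\#}$, $i_\partial^{\#}$ and $i_0^{\#}$ is a DGA homomorphism, and hence so is the composite $g^{\#} = i_{<k}^{\#} \circ h^{\#} \circ i_k^{\#} \circ i_\partial^{\#}$.

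I would then check that the induced products on $\L$, $\Rb$ and $\rC{\cL}$ are well-defined and that the remaining maps are multiplicative. Since $k \geq 1$, for $p, q \geq k$ we have $p + q \geq 2k \geq k$, so the graded product on $\C{L}$ restricts to $\L$; the multiplicativity of $i_\partial^{\#}\colon \C{\bar{X}} \to \C{L}$ then forces the product on $\C{\bar{X}}$ to restrict to $\Rb$ and $i_\partial^{\#}\colon \Rb \to \L$ to be a DGA map, and the analogous argument for $i_0^{\#}$ handles $\rC{\cL}$ and $i_0^{\tilde{\#}}$. The map $\incl$ is a sub-DGA inclusion by construction, and $\mathfrak{X}$ was already shown to be a DGA homomorphism in Lemma \ref{gamma}.

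The main subtlety I anticipate lies not in any single step but in the bookkeeping: the proof of Theorem \ref{thm:gDGAhom} never explicitly constructed a cup product on $\C{L}$, passing directly from $L^k$ to $\bar{X}$, whereas here we need one in order to even formulate $i_\partial^{\#}\colon \Rb \to \L$ as a map of DGAs. Inserting the additional intermediate extension along $i_k$ is nonetheless harmless, since Proposition \ref{inclusionDGA} only requires a CW-subcomplex inclusion and the composition of DGA homomorphisms is again a DGA homomorphism, so $g^{\#}$ remains multiplicative.
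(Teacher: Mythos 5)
Your proof is correct and follows essentially the same route as the paper: the commutativity is the same perimeter chase using $i_0^{\tilde{\#}}\circ\mathfrak{X}=f^{\#}|_{\L}$ together with $g=i_\partial\circ f$ and the fact that $i_\partial^{\#}(x)\in\L$ for $x\in\Rb$, and the multiplicative statement is obtained by the same successive extensions of the diagonal approximation starting from $\tilde{\Delta}_{\tL}$, combined with the observation that cotruncation is compatible with the graded product. The one place you go beyond the paper is your step (iii): the proof of Theorem \ref{thm:gDGAhom} extends $\nabla_{L^k}$ to $\bar{X}$ in a single step along $i_\partial\circ i_k$ and never literally produces a diagonal approximation on $L$ itself, whereas the present lemma needs a product on $\C{L}$ in order for $\L$ to be a DGA and for $i_\partial^{\#}\colon\Rb\to\L$ to be a map of DGAs; your intermediate extension along $i_k$ supplies exactly what the paper's phrase ``we also obtained a cellular diagonal approximation on $L$'' tacitly assumes, and it is harmless for the rest of the argument since the resulting $\nabla_{\bar{X}}$ still restricts to $\nabla_{L^k}$.
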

\begin{proof}Direct computation yields
\begin{align*}
i_0^{\tilde{\#}}\circ \mathfrak{X}\circ  i_\partial^{\#}	  &=f^{\#}|_{\L} \circ i_\partial^{\#}  \\
															&=f^{\#} \circ \begin{cases}
								i_\partial^{\#} 	&,~\text{in degree }\geq k \\
								0							&,~\text{in degree }<k
								\end{cases}		\\
&=			f^{\#} \circ i_\partial^{\#} \circ incl	\\
&=			g^{\#} \circ incl		
\end{align*}
where we used the identity $i_0^{\tilde{\#}} \circ \mathfrak{X} = f^{\#}|_{\L}$ from Lemma \ref{gamma}, $f^{\#}|_{\L} =f^{\#}$ in degrees $\geq k$, $f^{\#}|_{\L} =0$ in degrees $<k$, and finally 
\[
i_\partial^{\#} \circ incl=\begin{cases}
								i_\partial^{\#} 	&,~\text{in degree } \geq k \\
								0							&,~\text{in degree }<k
								\end{cases}		 
\]
by the construction of $\Rb$. 

As before we work with products such that $g^{\#}$ and $i_0^{\tilde{\#}}$ simultaneously are DGA homomorphisms. In the proof of Theorem \ref{thm:gDGAhom} we also obtained a cellular diagonal approximation on $L$ such that $i_\partial^\#$ is multiplicative. Restricting the induced product to $\L$ and restricting the product on $\C{\bar{X}}$ to $\bar{C}^\bullet$ makes the map $incl$ and $i_\partial^{\#}:\bar{C}^\bullet \to \L$ multiplicative. From the proof of Lemma \ref{gamma} it is clear that $\mathfrak{X}$ is multiplicative independent of the products we choose on $\C{\tL}$ and $\rC{\cL}$. Therefore we have a consistent choice of products. 
\end{proof}

\subsection{The intersection space cohomology de Rham map $\phi$}\label{sect:formphi}

We combine the maps $\tilde{\rho} :\WI \to \Rb$, $\phi_2 : \Rb \to Q^\bullet$ and $\phi_1^{-1} :  Q^\bullet \to \rC{\I} $ obtained in Section \ref{sect:deRhamPart}, \ref{sect:middlepart} and \ref{sect:pullback}, respectively, into our intersection space cohomology de Rham map $\phi:=\phi_1^{-1} \circ \phi_2 \circ \tilde{\rho}$. The DGA homomorphism $\phi_2$ induces a multiplicative maps already on the level of representatives and  $\phi_1^{-1}$ and  $\tilde{\rho}$ induce multiplicative maps on cohomology. Therefore, $\phi$ induces a multiplicative map on cohomology. In Section \ref{qisphi}, we check that this induced map is indeed is an isomorphism. But first let us write down the explicit form of $\phi$.

Note that $\phi_1^{-1}:Q^{\cdot}\to \rC{\I}$ is explicitly given by 
\[
\phi_1^{-1}(a,(i_\partial^{\#}a,b))=(a,b)
\]
and by the pullback construction we have, 
\[
\phi_2 (a)=(incl(a),\mathfrak{X} \circ  i_\partial^{\#}(a)).
\]
Further recall that
\[
\mathfrak{X}    
                =\begin{cases}
                        0 & ,~  deg\neq k\\
                        (id_{C^k(\tL)}\oplus \mathfrak{x})\circ f^{\# ,k}|_{\L} & ,~ deg=k ~ .
\end{cases}
\]
Together we have
\[
\phi_1^{-1}\circ\phi_2(a)\nonumber=\begin{cases}
                        (incl(a),0)&,~  deg(a)\neq k\\
                        (incl(a), \mathfrak{x}\circ f^{\# ,k}|_{\L}  \circ i_\partial^*(a))&,~  deg(a)=k
\end{cases}
\] 
and finally using that $incl \circ \tilde{\rho}=\rho_{\bar{X}}|$ by construction we arive at
\begin{align}
\phi(\omega)    &=\phi_1^{-1}\circ\phi_2 \circ \tilde{\rho} (\omega)\nonumber\\
                &=\begin{cases}
                        (\rho_{\bar{X}}| (\omega),0)&,~  deg(a)\neq k\\
                        (\rho_{\bar{X}}|  (\omega), \mathfrak{x}\circ f^{\# ,k}|_{\L}  \circ i_\partial^*\circ \tilde{\rho} (\omega))&,~  deg(a)=k~.
\end{cases} \label{phiuse}
\end{align}

\subsection{$\phi$ is a Quasi-Isomorphism}\label{qisphi}

To establish that $\phi$ is a quasi-isomorphism we make use of a 5-Lemma argument that is similar to the one given by Banagl in \cite[Section 9]{HIdR}. On the de Rham side the long exact sequence is induced by the following short exact sequence. 

\begin{lem}\cite[Lemma 9.5 adapted to our definition of $\WI$]{HIdR} \label{SES} \\ 
The sequence
\[
\begin{tikzcd}
0\rar&\WI \rar{incl.}& \WX \rar{proj. \circ i_{\partial}^{\#}} &\tW \rar&0
\end{tikzcd}
\]
is exact. Here $\tW$ is the orthogonal complement of $\TW$ with respect to the Hodge inner product on $\WL$, $incl.: \WI \to \WX$ is the subcomplex inclusion and $proj.: \WL\to \tW$ is orthogonal projection onto $\tW$. 
\end{lem}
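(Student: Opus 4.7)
The plan is to verify exactness at each of the three positions in the sequence. Exactness at $\WI$ is immediate, because the subcomplex inclusion is injective. Exactness at $\WX$ follows directly from the definition of $\WI$ together with the Hodge-theoretic orthogonal decomposition $\WL = \TW \oplus \tW$: since $\text{proj.}$ is orthogonal projection onto the second summand, its kernel is exactly $\TW$. Hence $\omega \in \ker(\text{proj.} \circ i_\partial^{\#})$ if and only if $i_\partial^{\#}\omega \in \TW$, which is the defining condition for $\omega \in \WI$. This matches the image of the inclusion $\WI \hookrightarrow \WX$, giving exactness in the middle.

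The only non-formal step is surjectivity of $\text{proj.} \circ i_\partial^{\#}$ onto $\tW$. I would reduce this to surjectivity of the pullback $i_\partial^{\#}: \WX \twoheadrightarrow \WL$, because for any $\eta \in \tW \subset \WL$ an $\omega \in \WX$ with $i_\partial^{\#}\omega = \eta$ automatically satisfies $\text{proj.}(i_\partial^{\#}\omega) = \text{proj.}(\eta) = \eta$, the last equality holding since $\eta$ already lies in $\tW$.

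To establish the surjectivity of $i_\partial^{\#}$ itself, I would invoke the standard collar construction. Pick a smooth collar $c: L \times [0,\varepsilon) \hookrightarrow \bar{X}$ of the boundary and a smooth cutoff $\chi: [0,\varepsilon) \to [0,1]$ with $\chi \equiv 1$ near $0$ and $\chi \equiv 0$ near $\varepsilon$. For any $\eta \in \WL$, define $\omega \in \WX$ to be $\chi(t)\,\pi^{*}\eta$ on the collar, where $\pi: L \times [0,\varepsilon) \to L$ is the projection and $t$ is the collar coordinate, and extend by zero outside the collar. Smoothness of the extension follows from $\chi$ vanishing near $\varepsilon$, and by construction $i_\partial^{\#}\omega = \eta$. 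This completes the argument.

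The argument is essentially formal; there is no real obstacle. The only ingredient not reducible to linear algebra is the collar extension, which is entirely standard for smooth manifolds with boundary.
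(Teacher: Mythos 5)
Your proof is correct and follows essentially the same route as the paper: injectivity of the inclusion, identification of $\ker(proj.\circ i_\partial^{\#})$ with $\WI$ via the orthogonal decomposition $\WL=\TW\oplus\tW$, and surjectivity of the composition via surjectivity of $i_\partial^{\#}$. The paper merely asserts the surjectivity of $i_\partial^{\#}$ and leaves the kernel computation implicit, whereas you supply the standard collar-extension argument and the reverse inclusion explicitly; no substantive difference.
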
 
\begin{proof}
Certainly the inclusion of $\WI$ in $\WX$ is injective and thus exactness holds at $\WI$. The maps $i_\partial^{\#}$ and $proj.$ are surjective and so is their composition. We further observe that $i_\partial^{\#} \circ incl.(\WI) \subset \TW$ by construction of $\WI$ and $proj.\circ i_\partial^{\#} \circ incl.=0$ since $proj.$ is the projection to the orthogonal complement of $\TW$. This gives the exactness in the middle entry. In conclusion the sequence is exact. 
\end{proof}
Now recall that if we equip $\cL$ with the canonical cell structure induced from the cell structure of $\tL$ and choose the tip of the cone as base point, then
\[
\rC{\I}=\C{\bar{X}}\oplus C^{\cdot -1}(\tL)~.
\]

This orthogonal decomposition gives rise to the short exact sequence involving the inclusion of the second summand followed by the projection to the first $\pi_1$. Note that changing the sign of the first map preserves the exactness, therefore we have the short exact sequence 
\begin{equation}\label{eq:SESCell}
\begin{tikzcd}
0\rar&C^{\cdot -1}(\tL) \rar{I}& \rC{\cL} \rar{\pi_1} &\C{\bar{X}} \rar&0
\end{tikzcd}
\end{equation}
with
\begin{align*}
I: C^{\cdot -1}(\tL) &\to \C{\bar{X}} \oplus C^{\cdot -1}(\tL)=\rC{\I}\\
b &\mapsto -(0,b).
\end{align*}
We introduce the sign here since it will be necessary to have commutativity later. Let us remark that while $\pi_1$ is a cochain map, $I$ is only a cochain map up to sign, however this is enough to induce a long exact sequence on cohomology.

This sequences combines with the long exact sequence induced from the short exact sequences of Lemma \ref{SES} into the following diagram. 
\begin{equation} \label{5Lemmadiagram}
\begin{tikzcd}[column sep=tiny]
\Hpp{\WX} \rar \dar{\rho_{\bar{X}}^*}&
\Hpp{\tW} \rar\dar{f^*\circ \rho_L|^*}&
\Hp{\WI} \rar\dar{\phi^*}& 
\Hp{\WX} \dar{\rho_{\bar{X}}^*}
\\
\Hpp{\C{\bar{X}}}\rar&
\Hpp{\C{\tL}}\rar&
\Hp{\rC{\I}}\rar&
\Hp{\C{\bar{X}}}
\end{tikzcd}
\end{equation}
 Here $\rho_L|$ is the restriction of $\rho_L$ to $\tW$.
 
Let us have a look at the maps in this diagram. In the upper row we have (from left to right) the induced map of $proj.\circ i_\partial^{\#}$ and the connecting homomorphism $\delta$ obtained by the zig-zag construction and the induced map of $incl.$. Explicitly $\delta$ maps a cohomology class represented by a closed form $\omega$ in $\tW$ to the cohomology class represented by $d\bar{\omega}$. Here $\bar{\omega}$ is an extension of $\omega$ to $\bar{X}$ (i.e. $i_\partial^{\#}(\bar{\omega})=\omega$). This implies
\begin{equation}\label{calcideldomegabar}
i_\partial^{\#}(d\bar{\omega})=d i_\partial^{\#}(\bar{\omega})=d \omega=0
\end{equation}
since $\omega$ is closed. The middle arrow in the lower row is induced by $I$ and the right one by $\pi_1$. The connecting homomorphism on the right is given by $g^*$. To see this recall that by construction the connecting homomorphism maps a closed cohomology class represented by a closed cochain $x\in \C{\bar{X}}$ to the cohomology class represented by a cochain $l\in \C{\tL}$ such that $I(l)=d(x,l')$ with $l'\in C^{\cdot -1}(\tL)$ arbitrary, so
\[
(0,l)=I(l)=d(x,l')=(dx,g^\#*(x)-dl')=(0,g^\#(x)-dl')~.
\]
Therefore we have for the cohomology class of $l$
\[
[l]=[g^*(x)-dl']=[g^\# (x)]=g^*[x]~.
\]

We want to apply the 5-Lemma to diagram (\ref{5Lemmadiagram}) so we check the pre-requisites.
\begin{lem}\label{lemma:phiqis}
Diagram (\ref{5Lemmadiagram}) commutes at least up to a sign and has exact rows.
\end{lem}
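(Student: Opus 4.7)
The plan is to verify, in turn, exactness of each row and commutativity (up to a sign) of each of the three squares. Exactness is immediate: the top row is the long exact cohomology sequence induced by the short exact sequence of Lemma~\ref{SES}, and the bottom row is the one induced by (\ref{eq:SESCell}); both follow from the standard zig-zag construction.

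For the rightmost square, the explicit formula (\ref{phiuse}) yields $\pi_1 \circ \phi(\omega) = \rho_{\bar X}|(\omega) = \rho_{\bar X}(\omega)$ in every degree, so commutativity holds already on cochain representatives. For the leftmost square, my strategy is to use naturality of the de Rham map together with $g = i_{\partial} \circ f$ to obtain $g^{\#} \circ \rho_{\bar X} = f^{\#} \circ \rho_L \circ i_{\partial}^{\#}$. The discrepancy between the two composites around the square, applied to a closed $\omega \in \WX$, equals $f^{\#} \rho_L \bigl( \tau ( i_{\partial}^{\#} \omega ) \bigr)$, where $\tau$ denotes the orthogonal projection onto $\TW$. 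This term vanishes on the nose when $\deg \omega < k$ (because $\TW = 0$ there) and when $\deg \omega > k$ (because $\C{\tL}$ is concentrated in degrees at most $k$); in the boundary case $\deg \omega = k$ it is a closed cochain in $\Ck{\tL}$, since $\tau \circ i_{\partial}^{\#}$ is a chain map, and hence exact because $H^k(\C{\tL}) = 0$.

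For the middle square, the plan is to chase the connecting homomorphism explicitly. For a closed $\omega \in \tW$ of degree $q-1$, I pick a lift $\bar\omega \in \WX$ with $\mathrm{proj} \circ i_{\partial}^{\#} \bar\omega = \omega$, so that $\delta[\omega] = [d\bar\omega] \in H^q(\WI)$. I claim the second component of $\phi(d\bar\omega)$ from formula (\ref{phiuse}) vanishes in every degree: for $q \neq k$ this is the formula, and for $q = k$ one has $\omega$ of degree $k-1$, where $\tW$ agrees with $\Omega^{k-1}(L)$, so $\omega = i_{\partial}^{\#} \bar\omega$ and hence $i_{\partial}^{\#} d\bar\omega = d\omega = 0$, killing the argument of $\mathfrak{x}$. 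Thus $\phi(d\bar\omega) = (\rho_{\bar X}(d\bar\omega), 0)$, which I rewrite using the differential of $\rC{\I}$ as $d(\rho_{\bar X}(\bar\omega), 0) - (0, g^{\#} \rho_{\bar X}(\bar\omega))$. Expanding $g^{\#} \rho_{\bar X}(\bar\omega) = f^{\#} \rho_L(\omega) + f^{\#} \rho_L(\tau(i_{\partial}^{\#}\bar\omega))$, the second summand is cohomologically trivial by the same degree-wise argument as in the leftmost square, so $[\phi(d\bar\omega)] = -[(0, f^{\#} \rho_L(\omega))] = I^*[f^{\#} \rho_L(\omega)]$, matching $I^* \circ f^* \circ \rho_L|^*[\omega]$; the minus sign built into the definition $I(b) = -(0,b)$ is precisely what the cone differential produces.

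The hardest part will be controlling the correction terms supported in degree exactly $k$: that is where the $\TW$-orthogonal splitting is nontrivial at the cochain level, where the vanishing of $H^k(\C{\tL})$ becomes essential, and where signs coming from the cone differential on $\rC{\I}$ and from the definition of $I$ must be tracked carefully to see that they cancel.
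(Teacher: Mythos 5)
Your proof is correct, and its overall architecture coincides with the paper's: exactness of both rows from the two short exact sequences, the right square checked on cochain representatives via formula (\ref{phiuse}), and the left and middle squares verified degree by degree using $g=i_\partial\circ f$, naturality of the de Rham map, and the vanishing of $H^k(\C{\tL})$. For the left square you could shortcut your $\deg\omega=k$ case: the target group $\Hpp{\C{\tL}}$ is already zero for $q-1\geq k$, so there is nothing to check there (your argument that the specific discrepancy class $f^{\#}\rho_L(\tau(i_\partial^{\#}\omega))$ is exact is correct but subsumed by this).

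The one place where you genuinely diverge is the middle square. The paper evaluates the cocycle $(\rho_{\bar X}|(d\bar\omega),0)$ against an arbitrary reduced cellular cycle $(x,l)$ of $\I$ using the Koszul convention $d\alpha(a)=(-1)^q\alpha(\partial a)$, and obtains commutativity only up to the sign $(-1)^q$; you instead manipulate cochain representatives directly with the cone differential of $\rC{\I}$, writing $(\rho_{\bar X}(d\bar\omega),0)=d(\rho_{\bar X}(\bar\omega),0)+I(g^{\#}\rho_{\bar X}(\bar\omega))$, and get strict commutativity. Both conclusions establish ``commutes up to a sign,'' which is all the 5-Lemma needs; the discrepancy in the resulting sign reflects the two different models (cochain-level matrix differential versus the dual of the chain complex under the Koszul pairing), not an error on either side. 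Your route is shorter and avoids the sign bookkeeping entirely; the paper's pairing computation has the side benefit of setting up exactly the evaluation formulas that are reused in Section \ref{Compatibility} to compare $\phi$ with Banagl's map $\phi_B$. One small point to make explicit in your version: $g^{\#}\rho_{\bar X}(\bar\omega)$ is closed in $C^{q-1}(\tL)$ (it equals $f^{\#}\rho_L(\omega)$ plus a term that vanishes in the relevant degrees, and $\omega$ is closed), so that applying $I^*$ to its class is legitimate.
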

\begin{proof}

The top and bottom rows are exact since they are part of the long exact sequence induced from the short exact sequence from Lemma \ref{SES} and the Sequence \ref{eq:SESCell}, respectively. In the following we prove the commutativity of this diagram. 

Let us start with the commutativity of the left square. If $q-1$ is greater or equal to $k,$ the cohomology group $\Hpp{\C{\tL}}$ vanishes and the square commutes trivially. If, however, $q-1$ is smaller than $k$ the projection of $\WL$ onto $\tW$ is the identity and we calculate explicitly
\begin{align*}
g^*\circ \rho_{\bar{X}}^*	&= f^* \circ i_\partial^* \circ \rho_{\bar{X}}^*\\
						&= f^* \circ \rho_L^* \circ i_\partial^*\\
						&= f^* \circ \rho_L^* \circ proj.^* \circ i_\partial^* .
\end{align*}

Next, we consider the middle square. For $q-1$ greater or equal to $k$ the cohomology group $\Hpp{\tW}$ vanishes and the square commutes for trivial reasons. Making use of Formula (\ref{calcideldomegabar}), we calculate
\begin{align*}
\phi^* \circ \delta [\omega]	  &= [\phi (d\bar{\omega})]
								\\&= \begin{cases}
                        [(\rho_{\bar{X}}|(d\bar{\omega}),0)]&,~ deg(\omega)\neq k\\
                        [(\rho_{\bar{X}}|(d\bar{\omega}), \mathfrak{x} \circ f^{\# ,k}|_\L \circ \tilde{\rho}_L \circ i_\partial^{\#}(d\bar{\omega}))]&,~ deg(\omega)=k
                        		\end{cases}
                        		\\&= [(\rho_{\bar{X}}|(d\bar{\omega}),0)].
\end{align*}
Since we equipped $\I$ with the canonical cell structure and choose the tip of the cone as base point its reduced chain complex is
\[
\widetilde{C}_\bullet(\I)=\Cc{\bar{X}} \oplus C_{\cdot -1}(\tL)
\]
with differential
\[
{\left(\begin{smallmatrix} \partial_{\bar{X}} & g_{\#} \\ 0 & -\partial_{\tL}[-1]   \end{smallmatrix} \right)}.
\]
Now take any reduced cellular $q$ cycle $(x,l)$ of $\I$. Being a cycle implies $\partial x= -g_{\#} l$. Further we follow the sign convention that for a $q-1$ cochain $\alpha$ and a $q$ chain $a$ we have
\[
d\alpha(a)=(-1)^q \alpha(\partial a).
\]
We use this to calculate 
\begin{align*}
[(\rho_{\bar{X}}|(d\bar{\omega}),0)]([x,l])	  &= \rho_{\bar{X}}|(d \bar{\omega})(x)
											\\&= d\rho_{\bar{X}}|(\bar{\omega})(x)
											\\&=(-1)^q \rho_{\bar{X}}|(\bar{\omega})(\partial x)
											\\&=(-1)^q \rho_{\bar{X}}|(\bar{\omega})(-g_{\#}l)
											\\&=(-1)^{q+1} g^{\#} \circ \rho_{\bar{X}}|(\bar{\omega})(l)																											\\&=(-1)^{q+1} f^{\#}\circ i_\partial^{\#} \circ \rho_{\bar{X}}|(\bar{\omega})(l)
											\\&=(-1)^{q+1} f^{\#}\circ \rho_{L} \circ i_\partial^{\#} (\bar{\omega})(l)
											\\&=(-1)^{q+1} f^{\#}\circ \rho_{L}(\omega)(l)
											\\&=(-1)^{q+1} (0,f^{\#}\circ \rho_{L}|(\omega))(x,l)
											\\&=(-1)^q I\circ f^{\#}\circ \rho_{L}|(\omega)(x,l)
											\\&=(-1)^q I^*\circ f^*\circ \rho_{L}|^* [\omega]([x,l]).
\end{align*}
Thus the middle square is commutative up to a sign.

Finally, we consider the right square.
\begin{align*}
\pi_1 \circ \phi (\omega)	  &= \begin{cases}
                        \pi_1 \circ(\rho_{\bar{X}}|(\omega),0)& deg(\omega)\neq k\\
                        \pi_1 \circ (\rho_{\bar{X}}|(\omega),\mathfrak{x} \circ f^{\# ,k}|_\L \circ \tilde{\rho}_L \circ i_\partial^{\#}(\omega))& deg(\omega)=k
\end{cases}
						\\&=\rho_{\bar{X}}|(\omega)
						\\&=\rho_{\bar{X}}\circ incl.(\omega)
\end{align*} 
This proves the commutativity of the square already on cochain level. 
\end{proof}

Furthermore, $\rho_L$ is a quasi-isomorphism and $f$ induces an isomorphism on cohomology in degrees lower than $k$. Thus their composition also induces an isomorphism in degrees lower than $k$. Restricting to the truncated complex $\tW$ yields a quasi-isomorphism $f^{\#}\circ \rho_L|$. The classical de Rham map also induces an isomorphism on cohomology.
In conclusion, the 5-Lemma is applicable and 
\[ \phi^*:\HH{\WI}\to \HH{\rC{\I}} \] 
is an isomorphism. We established before that $\phi^*$ is multiplicative and thus have proven our main result.

\Main

\section{Compatibility with Banagl's de Rham Theorem for HI}\label{Compatibility}
In \cite[Section 9]{HIdR}, Banagl constructs an alternative de Rham map, which is defined by integrating forms in $\WI$ over smooth cycles on the blowup $\bar{X}$ of $X$. 
We recall his construction and show that his de Rham map is compatible with the de Rham ring isomorphism of Section \ref{section_multdR}. 

Banagl uses a partial smooth model $\left( S_\bullet^{\propto} (g), \partial \right)$ for the mapping cone of the map $g: t_{<k} L \rightarrow \bar{X}.$ This chain complex is defined as $S_r^{\propto} (g) := S_r^{\infty} \left( \bar{X} \right) \oplus H_{r-1} (t_{<k} L)$ with the boundary operator $\partial: S_r^{\propto} (g) \to S_{r-1}^{\propto} (g)$ involving Lee's smoothing operator $s: S_\bullet (\bar{X}) \to S_\bullet^{\infty} (\bar{X})$, defined in \cite[Section 18]{Lee}, and a map $q: H_\bullet (t_{<k} L) \to S_\bullet (t_{<k} L),$ defined as follows.
For any $r \in \ZZ$, choose a completion $H_r'$ of $\im \, \partial_{r+1}$ in $\ker \, \partial_r$, i.e. $\im \, \partial_{r+1} \oplus H_r' = \ker \, \partial_r.$ Then, choosing a representative in $H_r'$ for each homology class $x \in H_r (t_{<k} L)$ gives rise to a map $q: H_{\bullet} (t_{<k} L) \to H_\bullet' \hookrightarrow \ker \partial \hookrightarrow S_\bullet (t_{<k} L),$ which satisfies $[q(x)] = x \in H_\bullet (t_{<k} L).$ Since the definition of the multiplicative de Rham isomorphism in this paper makes use of normalized singular chains, we choose $q$ such that its image is contained in the normalized chains. This is possible since the subcomplex inclusion from the complex of normalized singular chains to all singular chains is a quasi-isomorphism.

The boundary operator $\partial: S_r^{\propto} (g) \to S_{r-1}^{\propto} (g)$ is then defined as $\partial (v,x) := (\partial v + s g_{\#} q(x), 0)$. The partial smooth model $ S_\bullet^{\propto} (g)$ is quasi-isomorphic to the relative cochain complex $\widetilde{C}_\bullet (\I)$ by \cite[Proposition 9.2]{HIdR}. Since we want to relate the two different de Rham isomorphisms to each other, we want to give an explicit description of the chain maps that induce this homology isomorphism.
Therefore, let $\bar{S}_\bullet (g)$ be the complex defined by $\bar{S}_r (g) := \widehat{S}_r (\bar{X}) \oplus  H_{r-1} \left(t_{<k} L \right)$ with $\partial (v,x) = (\partial v + g_{\#} q(x), 0 )$, where $\widehat{S}_\bullet (\bar{X})$ denotes the chain complex of nondegenerate singular chains. It fits into the diagram of quasi-isomorphisms
\[
  \widetilde{C}_\bullet (\I) \xleftarrow{\gamma_{\bar{X}{\#}} \oplus \gamma_{t_{<k} L{\#}} q} \bar{S}_\bullet (g) \xrightarrow{\widehat{s} \oplus \text{id}} S_\bullet^{\propto} (g).
\]
Here, we use the identification $\widetilde{C}_\bullet(\I)=\Cc{\bar{X}} \oplus C_{\cdot -1}(\tL)$ as in the proof of Lemma \ref{lemma:phiqis} and let $\widehat{s}: \widehat{S}_\bullet (\bar{X}) \hookrightarrow S_\bullet (\bar{X}) \xrightarrow{s} S_\bullet^\infty (\bar{X})$ be the composition of the denoted subcomplex inclusion and Lee's smoothing operator.
As in Section \ref{cellulardR}, the maps $\gamma$ denote homotopy equivalences coming with the geometric realization.
The following diagram
\[
\begin{tikzcd}
  \widehat{S}_\bullet \left( t_{<k} L \right) \ar{r}{g_{\#}} \ar{d}{\gamma_{t_{<k} L{\#}}} & \widehat{S}_\bullet (\bar{X}) \ar{d}{\gamma_{\bar{X}{\#}}} \\
C_\bullet \left( t_{<k} L \right) \ar{r}{g_{\#}} & C_\bullet (\bar{X})
\end{tikzcd}
\]
commutes as a special case of a more general commutative diagram that contains the $\gamma_{\#}$'s and any cellular continuous map between $t_{<k} L$ and $\bar{X}$. We show that $\gamma_{\bar{X}{\#}} \oplus \gamma_{t_{<k} L{\#}} q$ is a chain map with the following calculation,
\[ 
\begin{split}
  & \partial \left( \gamma_{\bar{X}{\#}} v,  \gamma_{t_{<k} L{\#}} q(x) \right) = \bigl ( g_{\#} \gamma_{t_{<k} L{\#}} q (x) - \partial (\gamma_{\bar{X}{\#}} v), \underbrace{\partial \gamma_{t_{<k} L{\#}} q (x)}_{=\gamma_{t_{<k} L{\#}} \left ( \partial q (x)\right) } \bigr) \\
  = ~& \left( g_{\#} \gamma_{t_{<k} L{\#}} q (x) - \partial (\gamma_{\bar{X}{\#}} v), 0 \right)
  = (\gamma_{\bar{X}{\#}} g_{\#} q (x) - \gamma_{\bar{X}{\#}} \partial v, 0 ) \\ 
	= ~& \gamma_{\bar{X}{\#}} \oplus \gamma_{t_{<k} L{\#}} q  \left( \partial (x,v) \right).
\end{split}
\]
It is a quasi-isomorphism by the following argument: All the mapping cone-like complexes fit into short exact sequences with the complexes contained in the cone on the left and right. The maps $\gamma_{t_{<k} L{\#}} q, \gamma_{\bar{X}{\#}}$ and their direct sum fit into a diagram of these two short exact sequences. Since $\gamma_{t_{<k} L{\#}} q$ and $ \gamma_{\bar{X}{\#}}$ are quasi-isomorphisms, the 5-Lemma gives that their direct sum is also a quasi-isomorphism.
The map $\widehat{s} \oplus \text{id}$ is a quasi-isomorphism by the same argument. Note, that this argument can also be used to prove \cite[Lemma 9.1]{HIdR}.

Since the integral of any smooth differential form over a degenerate simplex vanishes, the de Rham maps are indifferent to the use of normalized or non-normalized singular simplices, so we will neglect this distinction in the following. Banagl's de Rham map is defined as
\[ 
\begin{split}
\phi_B: H^\bullet \left( \WI \right) &\rightarrow H_\bullet \left( S_\bullet^{\propto} (g) \right)^{\dagger}, \\
\phi_B \left( [\omega] \right) \left( \left[ (v,x) \right] \right) &:= \int_v \omega~.
\end{split}
\]
It is noted as $\Psi_{\bar{p}}$ by Banagl, but we call it $\phi_B$ to be consistent with our previous notation of the de Rham morphisms for intersection space cohomology. $\phi_B$ is an isomorphism, as is shown in \cite[Theorem 9.11]{HIdR}. We adapt this map to the intermediate complex $\bar{S}_\bullet (g)$. 
\[ \begin{split} 
\bar{\phi}_B: H^\bullet \left( \WI \right) & \rightarrow H_r\left( \bar{S}_\bullet (g) \right)^\dagger, \\
\bar{\phi}_B \left( [\omega] \right) \left( \left[ (v,x) \right] \right) & := \int_{sv} \omega.
\end{split} \] 
This map is well defined by the same arguments as in \cite[Prop. 9.8]{HIdR} and fits into the following commutative diagram of isomorphisms
\begin{equation}\label{diag:phibar}
\begin{tikzcd}
  H^\bullet \left( \WI \right) \ar{r}{\phi_B} \ar{rd}[swap]{\bar{\phi}_{B}} & H_\bullet \left( S_\bullet^\propto (g) \right)^\dagger \ar{d}{(\widehat{s} \oplus \text{id})^\dagger} \\
\ & H_\bullet \left( \bar{S}_\bullet (g) \right)^\dagger~.
\end{tikzcd}
\end{equation}

The choice made to define the partial smooth complex $S_\bullet^\propto (g)$ corresponds to a choice for the map $\mathfrak{x}: C^k \left( t_{<k} L \right) \rightarrow C^{k-1} \left( t_{<k} L \right)$ defined in Lemma \ref{gamma}, which was used to define $\phi$.
We employ the following choice. First, choose a basis $ \{x_1,\cdots,x_r \} $ of $C_k (t_{<k} L). $ The Moore approximation or spatial homology truncation, defined in \cite[Chapter 1.1]{intspaces}, is installed such that the boundary map $\partial_k: C_k(t_{<k} L) \rightarrow C_{k-1}(t_{<k} L)$ is injective, hence $ \mathcal{B}:= \left\{ \partial_k x_1,\cdots, \partial_k x_r \right\} $ is a basis for $ \text{im} \, \partial_k \subset C_{k-1} \left( t_{<k} L \right).$ The morphism $q: H_{k-1} \left( t_{<k} L \right) \rightarrow H_{k-1}'$ maps a basis $ \left\{ \xi_1,\cdots,\xi_l \right\} $ of $H_{k-1} \left( t_{<k} L \right)$ to the basis $\left\{ q(\xi_1),\cdots, q(\xi_l) \right\} $ of $H_{k-1}'.$ Since $\gamma_{\#}: \widehat{S}_\bullet \left( t_{<k} L \right) \rightarrow C_\bullet \left( t_{<k} L \right)$ is a quasi-isomorphism, the set $\mathcal{J}:= \left\{ \gamma_{\#} \left( q \xi_1 \right),\cdots, \gamma_{\#} \left( q \xi_l \right) \right\}$ completes $ \mathcal{B} $ to a basis of $\ker \partial_{k-1}.$ Now choose any completion $ \mathcal{B} \cup \mathcal{J} \cup \left\{ y_1,\cdots, y_s \right\}$ to a basis of $C_{k-1} \left( t_{<k} L \right) $ and let $ \left\{ (\partial_k x_i)^\dagger, \left( \gamma_{\#} (q \xi_j) \right)^\dagger, y_k^\dagger \right\}_{i,j,k} $ be the corresponding dual basis of $C^{k-1} \left( t_{<k} L \right).$ Then, for any $1 \leq i \leq r,$ $1\leq j \leq l,$ and $1\leq k\leq s$, we get 
\[ d \left( \gamma_{\#} (q \xi_j) \right)^\dagger (x_i) = \left( \gamma_{\#} (q \xi_j)\right)^\dagger (\partial_k x_i) = 0 \]
as well as
\[ d ( y_k^\dagger ) (x_i) = y_k^\dagger (\partial_k x_i) = 0. \]
In other words, all the cochains $\left( \gamma_{\#} (q \phi_i) \right)^\dagger, y_j^\dagger$ are closed.
The cochains $(\partial_k x_i)^\dagger$ are not closed since $d \left( (\partial_k x_i)^\dagger \right) (x_i) = 1.$ To deduce the desired compatibility result for the different de Rham maps, we choose the map $\mathfrak{x}: C^k \left( t_{<k} L \right) \rightarrow C^{k-1} \left( t_{<k} L \right) $ such that its image is contained in the span of $ \left\{ (\partial_k x_1)^\dagger, \cdots, (\partial_k x_r)^\dagger \right\}$.

\begin{rmk}
Before stating the compatibility theorem, note that we do not write all the decorations of the different geometric realizations $\gamma$ in the following. We do so to make the theorem and proof more readable. We encourage the reader to check which $\gamma$ is used in the respective situation. For the sake of readability we will in the following also write $\phi$ for the map $\phi^*$.
\end{rmk}

\begin{thm}\label{thm:Compatability}
Let the map $\mathfrak{x}: C^k \left( t_{<k} L \right) \rightarrow C^{k-1} \left( t_{<k} L \right) $ be chosen as described above.
Then the following diagram of isomorphisms commutes
\[
\begin{tikzcd}
 H^\bullet \left( \WI \right) \ar{d}{\cong}[swap]{\phi} \ar{r}{\bar{\phi}_B}[swap]{\cong} & H_\bullet \left( \bar{S}_\bullet (g) \right)^\dagger \\
 \widetilde{H}^\bullet \left( \I \right) \ar{r}{\cong} & 
 \widetilde{H}_\bullet \left(\I \right)^\dagger \ar{u}{\cong}[swap]{(\gamma_{\#} q \oplus \gamma_{\#})^\dagger} ~.
\end{tikzcd}
\]
Here, $\widetilde{H}^\bullet \left( \I \right) \rightarrow \widetilde{H}_\bullet \left( \I \right)^\dagger $ is the standard map, that is induced by evaluating any representative of a cellular cohomology class on any representative of a homology class.
\end{thm}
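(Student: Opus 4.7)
The plan is to verify the commutativity of the diagram directly on representatives. Fix a closed form $\omega\in\WI$ of degree $q$ and a cycle $(v,x)\in\bar{S}_q(g)$, so that $\partial v=-g_\# q(x)$. I will evaluate both paths around the diagram on the class $[(v,x)]$ and reduce the comparison to a manageable cochain identity. The right-down path yields $\bar{\phi}_B([\omega])([(v,x)])=\int_{sv}\omega$. For the down-right path, using the explicit formula for $\phi$ in (\ref{phiuse}) and the natural pairing between $\widetilde{C}^\bullet(\I)$ and $\widetilde{C}_\bullet(\I)$, one obtains $\rho_{\bar{X}}(\omega)(\gamma_\# v)+\beta(\omega)(\gamma_\# q(x))$, where $\beta(\omega)=\mathfrak{x}\circ f^\#\circ\rho_L\circ i_\partial^\#(\omega)$ in degree $k$ and $\beta(\omega)=0$ otherwise.

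Next, unwrapping $\rho_{\bar{X}}=\delta^\#\circ\mathrm{restr}\circ s^\dagger\circ\rho_s$ yields $\rho_{\bar{X}}(\omega)(\gamma_\# v)=\int_{s\delta_\#\gamma_\# v}\omega$. Applying the chain homotopy $K$ realizing $\delta_\#\gamma_\#\simeq\mathrm{id}$ on $\widehat{S}_\bullet(\bar{X})$, discarding the boundary contribution via Stokes together with closedness of $\omega$, and substituting the cycle relation $\partial v=-g_\# q(x)$ produces
\[
\int_{sv}\omega - \rho_{\bar{X}}(\omega)(\gamma_\# v) = \int_{sKg_\# q(x)}\omega.
\]
The theorem is thereby reduced to proving the identity
\[
\int_{sKg_\# q(x)}\omega = \beta(\omega)(\gamma_\# q(x))
\]
on all cycles $(v,x)\in\bar{S}_q(g)$ and all closed $\omega\in\WI^q$.

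I would then perform a case analysis on $q$. For $q>k$, the spatial truncation forces $H_{q-1}(t_{<k}L)=0$, so $x=0$ and both sides vanish. For $q<k$, choosing $K$ to restrict to a chain homotopy on $\widehat{S}_\bullet(L)$ (a relative CW-approximation applied to $\delta$) keeps $Kg_\# q(x)$ inside $\widehat{S}_q(L)$; the integral then vanishes because $i_\partial^\#\omega=0$, matching $\beta(\omega)=0$.

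The hard part will be the case $q=k$. Here the specific choice of $\mathfrak{x}$ with image in the dual span of $\{(\partial_k x_i)^\dagger\}$, combined with the construction of $q$ having image in the chosen complement $H'_{k-1}$ of $\mathrm{im}\,\partial_k$, automatically forces $\beta(\omega)(\gamma_\# q(x))=0$, so the remaining task is the vanishing of $\int_{sKg_\# q(x)}\omega$. I expect this to follow by combining the cycle condition $g_\# q(x)=-\partial v$ (which makes $g_\# q(x)$ null-homologous in $\widehat{S}_{k-1}(\bar{X})$), the adaptedness of $K$ to $L$, and the Hodge cotruncation condition $i_\partial^\#\omega\in\ker d^*$ on forms in $\WI^k$, together with an orthogonality argument using the Hodge decomposition on $L$. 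Once this geometric vanishing is established, the representative-level equality descends to the claimed commutativity of the induced maps on cohomology.
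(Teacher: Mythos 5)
Your strategy is the one the paper uses: evaluate both composites on a closed $\omega$ and a cycle $(v,x)$ with $\partial v=-g_{\#}q(x)$, kill the extra term $\mathfrak{x}\bigl(f^{\#}|\circ\tilde\rho_L\circ i_\partial^{\#}\omega\bigr)(\gamma_{\#}qx)$ by the dual-basis choice of $\mathfrak{x}$, and compare $\rho_{\bar X}|(\omega)(\gamma_{\#}v)$ with $\int_{sv}\omega$ via the chain homotopy between $\delta_{\#}\gamma_{\#}$ and the identity, Stokes, and the cycle relation. Your cases $q>k$ and $q<k$ are handled exactly as in the paper, which constructs the homotopies $H$ and $H_\partial$ compatibly with $i_\partial$ so that
\[
\int_{sHg_{\#}q(x)}\omega=\int_{s_\partial H_\partial(f_{\#}q(x))}i_\partial^{\#}\omega .
\]

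The genuine gap is the case $q=k$, which you explicitly leave open. There you must show $\int_{sKg_{\#}q(x)}\omega=0$, i.e.\ the vanishing of the integral of the harmonic $k$-form $i_\partial^{\#}\omega\in\ker d\cap\ker d^{*}$ over the $k$-chain $K_\partial(f_{\#}q(x))$ in $L$. The mechanism you propose, ``an orthogonality argument using the Hodge decomposition on $L$,'' cannot deliver this: Hodge orthogonality is an $L^2$ statement about pairs of forms and gives no information about the integral of a harmonic form over an arbitrary singular chain; and that chain is top-degree for the form and is not a cycle (its boundary is $\delta|_{\#}\gamma|_{\#}(f_{\#}q(x))-f_{\#}q(x)$), so no Stokes reduction is available either. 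Be aware that the degree bookkeeping is delicate precisely here: $x$ lives in $H_{q-1}(t_{<k}L)$, which vanishes only for $q\geq k+1$, so at $q=k$ one cannot dispose of the term by arguing $q(x)=0$; the paper's one-line justification (``$qx=0$ for $p\geq k$'') literally covers only $p\geq k+1$, and whatever argument you supply must genuinely address $q=k$, for instance by exploiting the specific construction of $H_\partial$ or by reorganizing the proof so that this correction term never arises. As written, the proposal does not prove the theorem.
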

\begin{proof}
  Let $ \omega \in \WIp$ and let $(v,x) \in \bar{S}_p (g)$ be closed. Then 
  \[ \bar{\phi}_B \left( [\omega] \right) \left( \left[ (v,x) \right] \right) = \int_{sv} \omega. \]
On the other hand, equation (\ref{phiuse}) yields 
\begin{align*}
  &\phi \left( [\omega] \right) \left( \left[ (\gamma_{\#} v, \gamma_{\#} qx)  \right] \right) \\
&=\begin{cases}
 0 + \rho_{\bar{X}}| (\omega) (\gamma_{\#} v) 	&,~p\neq k\\
\left (\mathfrak{x} \left( f^k|_\L \circ \tilde{\rho}_L \circ i_\partial^{\#}(\omega)\right) \right ) (\gamma_{\#} qx)+ \left( \rho_{\bar{X}}|(\omega) \right) (\gamma_{\#}v)		&,~p=k~.
\end{cases}
\end{align*}
To be precise, the map $\phi$ denoted here is the composition of the actual de Rham map $\phi$ with the isomorphism $\widetilde{H}^\bullet \left( \I \right) \xrightarrow{\cong} \widetilde{H}_\bullet \left( \I \right)^{\dagger}.$
Note that
\[ \left( \mathfrak{x} \left( f^k|_\L \circ \tilde{\rho}_L \circ i_\partial^{\#} (\omega) \right) \right) \in \text{span} \left( \left\{ \left( \partial_k x_1 \right)^\dagger, \cdots, \left( \partial_k x_r \right)^\dagger \right\} \right), \]
 by our choice for the map $\mathfrak{x}$.
Our choice of basis for $C_{k-1} \left( t_{<k} L \right)$ implies
\[ \left( \mathfrak{x} \left( f^k|_\L \circ \tilde{\rho}_L \circ i_\partial^{\#} (\omega) \right) \right) \left( \gamma_{\#} q x \right) = 0, \]
since it is obviously true, that $\gamma_{\#} qx \in \text{span} \left\{ \gamma_{\#}(q \xi_1), \cdots, \gamma_{\#}(q \xi_l) \right\}.$
The result is the following equality.
\[
  \phi \left( [\omega] \right) \left( \left[ (\gamma_{\#} v, \gamma_{\#} qx) \right] \right) = \rho_{\bar{X}}| (\omega) (\gamma_{\#} v). 
\]

As we mentioned in Section \ref{cellulardR}, the map $\gamma_{\bar{X}}: \Gamma \bar{X} \to \bar{X}$ is a homotopy equivalence, which maps the geometric realization of the boundary $i_\partial: \partial \bar{X} \hookrightarrow \bar{X}$ to this boundary, $\gamma_{\bar{X}}|: \Gamma \partial \bar{X} \to \partial \bar{X}$. This restriction is also a homotopy equivalence. Let $\delta_{\bar{X}}: \bar{X} \to \Gamma \bar{X}$ be a homotopy inverse of $\gamma_{\bar{X}}$, which maps the boundary $\partial \bar{X}$ to its realization $\Gamma \partial \bar{X}$ and $\bar{H}: \Gamma \bar{X} \to \Gamma \bar{X}$ a homotopy between $\delta_{\bar{X}} \gamma_{\bar{X}}$ and the identity $\id_{\Gamma \bar{X}}$. Such $\delta$ and $H$ exist by the homotopy extension and lifting property, see \cite[Chapter 10.3]{May}. 
Since the cellular chain complex of the geometric realization is naturally isomorphic to the singular chain complex of $\bar{X}$, this homotopy $\bar{H}$ induces the chain homotopy $H: \widehat{S}_{\bullet} (\bar{X}) \to \widehat{S}_{\bullet +1} (\bar{X})$ between the chain map $\delta_{\bar{X}{\#}} \gamma_{\bar{X}{\#}}: \widehat{S}_\bullet (\bar{X}) \rightarrow \widehat{S}_\bullet (\bar{X})$ and the identity. The restriction $\bar{H}|$ to $\Gamma \partial \bar{X}$ induces a chain homotopy $H_\partial: \widehat{S}_{\bullet} (\partial \bar{X} ) \to \widehat{S}_{\bullet +1} (\partial \bar{X})$ between $\delta_{\bar{X}}|_\# \circ \gamma_{\bar{X}}|_\#: \widehat{S}_\bullet (\partial \bar{X}) \to \widehat{S}_\bullet (\partial \bar{X})$ and the identity such that 
	$ {i_{\partial}}_\# \circ H_\partial = H_\partial \circ {i_\partial}_\#.$

 We then get the following equation, using the definition of the absolute de Rham map $\rho_{\bar{X}}$ of Section \ref{cellulardR} (where it is denoted by just $\rho$).
\[ 
\begin{split}
	\rho_{\bar{X}}| (\omega) (\gamma_{\bar{X}{\#}} v ) &= \left( \delta_{\bar{X}}^{\#} s^\dagger \rho_{s} (\omega) \right) (\gamma_{\bar{X}{\#}} v) = \rho_{s} (\omega) \left( s \delta_{\bar{X}{\#}} \gamma_{\bar{X}{\#}} v \right) \\
&= \rho_{s} (\omega) \left( s (v + \partial H v + H (\underbrace{\partial v}_{=- g_{\#} qx}) \right) \\
&= \int_{sv} \omega + \int_{s \partial Hv} \omega - \int_{s Hg_{\#} qx} \omega = \int_{sv} \omega,
\end{split}
\]
We made use of $(v,x) \in \bar{S}_p (g)$ being a cycle as well as of the facts, that
\[ \int_{s \partial Hv} \omega = \int_{sHv} d\omega = 0, \]
and (using $g = i_\partial f$ and $H \circ {i_\partial}_\# = {i_\partial}_\# \circ H_\partial$) 
\[ \int_{s H g_{\#} q x} \omega = \int_{s {i_\partial}_{\#} H_\partial (f_{\#}qx) } \omega = \int_{s H_\partial (f_{\#} qx)} i_\partial^{\#} \omega =0, \]
because $qx = 0$ for $p \geq k,$ while $ i_\partial^{\#} \omega = 0$ for $p <k.$ The conclusion is the statement of the theorem,
\[
  \phi \left( [\omega] \right) \left( \left[ (\gamma_{\#} v, \gamma_{\#} qx) \right] \right) = \int_{sv} \omega = \bar{\phi}_B \left( [\omega] \right) \left( \left[ (v,x) \right] \right)~.
\]
\end{proof}

\noindent
Finally, we combine Theorem \ref{thm:Compatability} with Diagram (\ref{diag:phibar}) into the following commutative diagram.
\[
\begin{tikzcd}
\			&	H^\bullet \left( \WI \right) \ar{rd}{\phi_B} \ar{d}{\bar{\phi}_{B}} \arrow[ld,"\phi" above] 	&	\	\\
\widetilde{H}^\bullet \left( \I \right) \ar{r}{\cong} 		& H_\bullet \left( \bar{S}_\bullet (g) \right)^\dagger & H_\bullet \left( S_\bullet^\propto (g) \right)^\dagger \arrow[l,"\cong" above] ~.
\end{tikzcd}
\]
\begin{rmk}
Note, that only $H^\bullet \left( \WI \right)$ and $\widetilde{H}^\bullet \left( \I \right)$ are naturally equipped with a cup product. The question, if any of the maps besides $\phi$ are ring isomorphisms, therefore depends on our choice of product on $H_\bullet \left( \bar{S}_\bullet (g) \right)^\dagger$ and $H_\bullet \left( S_\bullet^\propto (g) \right)^\dagger$. Since all the maps in the diagram are isomorphisms, they are multiplicative if and only if we choose the products that are obtained by transporting the naturally defined products via those isomorphisms. All products defined in this way are consistent due to the commutativity of the diagram.

This means that Banagl's de Rham isomorphism $\phi_B$ is a ring isomorphism if and only if we define the cup product $\cup_{\propto}$ on $H_\bullet (S^\propto_\bullet (g) )$ as the transport of the cup product $\cup_{\I}$ of $\widetilde{H}^\bullet \left( \I \right)$ via the isomorphism $\mathcal{I}:H_\bullet \left( S_\bullet^\propto (g) \right)^\dagger \xrightarrow{\cong} \widetilde{H}^\bullet (\I)$, i.e.
\[ \alpha \cup_{\propto} \beta := \mathcal{I}^{-1} \left( \mathcal{I} (\alpha) \cup_{\I} \mathcal{I} (\beta) \right) \quad \text{for} ~ \alpha, \beta \in H_\bullet (S^\propto_\bullet (g) )^{\dagger}. \]
It is unclear though if this cup product on cohomology level comes from a cup product on the cochain complex $S_\propto^\bullet (g)$, defined in analogy to $S^\propto_\bullet (g)$. A cup product on the standard mapping cone $S^\bullet (g)$ can be defined by 
\[
  (\psi, \mu) \cup (\xi, \nu) := (\psi \cup \xi, \mu \cup g^\# \xi).
\]
The definition of $S_\propto^\bullet (g)$ uses the map $q$, however, which depends on choices and cannot be expected to be natural. Therefore, it is open whether this construction also gives a cup product on the cochain complex $S_\propto^\bullet (g)$.
\end{rmk}

\subsection*{Acknowledgements}
We thank Prof. Dr. Markus Banagl (Universität Heidelberg) for his input as supervisor of the master thesis of the first author on which parts of this article are based.
The second author wants to thank the Canon Foundation, that supported him during his stay at the Hokkaido University, Japan, and Prof. Toru Ohmoto for being a generous host.
\bibliography{bib}
\bibliographystyle{plain}

\end{document}